%% file: final_main.tex
\documentclass[a4paper,11pt]{article}

\usepackage{adjustbox}

\newcommand{\be }{\begin{equation}}
\newcommand{\ee }{\end{equation}}

\newcommand{\dd}{\mathrm{d}}

\input{final_preamble/packages}
\input{final_preamble/config}

\input{final_preamble/theorems}
\input{final_preamble/commands}

\input{final_preamble/notations}
\input{final_preamble/specialized}

\title{Stokes phenomenon and quantum supergroup $U_q(\mathfrak{gl}(m|n))$}
\author{Qiao Li, Zikang Wang and Xiaomeng Xu}
\date{}

\newcommand{\Addresses}{{
  \bigskip
  \footnotesize
\noindent \textsc{
School of Mathematical Sciences \& Beijing International Center
for Mathematical Research, Peking University, Beijing 100871, China}\par\nopagebreak
  \textit{E-mail address}: \texttt{xxu@bicmr.pku.edu.cn}
}}

\newcommand{\AddressesLi}{{
  \bigskip
  \footnotesize
\noindent \textsc{
School of Mathematical Sciences, Peking University, Beijing 100871, China}\par\nopagebreak
  \textit{E-mail address}: \texttt{lqlgw\_pkusms@stu.pku.edu.cn}
}}

\newcommand{\AddressesWang}{{
  \bigskip
\footnotesize
\noindent \textsc{School of Mathematical Sciences, Peking University, Beijing 100871, China}\par\nopagebreak
  \textit{E-mail address}: \texttt{wzikang@stu.pku.edu.cn}
}}

\begin{document}
\maketitle
\begin{abstract}
In this paper we study the Stokes phenomenon of the quantum confluent hypergeometric supersystem, certain meromorphic linear system of ordinary differential equation with a second order pole, associated to the Lie superalgebra $\mathfrak{gl}_{m|n}$. We prove that its Stokes supermatrices satisfy the Yang-Baxter equation, and thus give rise to the quantum supergroup $U_q(\mathfrak{gl}(m|n))$.
\end{abstract}

\input{final_sections/1Introduction}

\input{final_sections/2StokesMatrices}

\input{final_sections/3RLLrelation}

\input{final_sections/4Representation}

\end{document}

%% file: final_preamble/packages.tex
\usepackage[utf8]{inputenc}
\usepackage[T1]{fontenc}
\usepackage{lmodern}
\usepackage{times}
\usepackage{microtype}
\usepackage{amsmath}
\usepackage{amssymb}
\usepackage{amsfonts}
\usepackage{amsthm}
\usepackage{amscd}
\usepackage{mathtools}
\usepackage{mathdots}
\usepackage{bm}
\usepackage{latexsym}

\usepackage{stmaryrd}
\SetSymbolFont{stmry}{bold}{U}{stmry}{m}{n}
\usepackage{mathrsfs}
\usepackage{bbm}
\usepackage{xfrac}
\usepackage{nicefrac}

\usepackage{graphicx}
\usepackage{float}
\usepackage{subfigure}
\usepackage{tikz}
\usetikzlibrary{matrix,arrows,decorations.pathmorphing}
\usepackage{tikz-cd}
\usepackage[all]{xy}

\usepackage{enumitem}

\usepackage{geometry}
\usepackage{hyperref}
\usepackage[toc,page]{appendix}

\usepackage[usestackEOL]{stackengine}
\usepackage{color}
\usepackage{lipsum}
\usepackage{amsxtra}

\usepackage{comment}

%% file: final_preamble/config.tex
\usepackage{bm}
\SetSymbolFont{stmry}{bold}{U}{stmry}{m}{n}

\def\qed{\hfill ~\vrule height6pt width6pt depth0pt}

%% file: final_preamble/theorems.tex
\usepackage{theoremref}
\newtheorem{thm}{Theorem}[section]
\newtheorem{lem}[thm]{Lemma}

\newtheorem{pro}[thm]{Proposition}

\theoremstyle{definition}

\newtheorem{rmk}[thm]{Remark}
\newtheorem{defi}[thm]{Definition}

%% file: final_preamble/commands.tex
\newcommand{\IC}{\mathbb{C}}

\newcommand{\h}{\mathfrak h}

%% file: final_preamble/notations.tex
\newcommand{\End}{\mathrm{End}}

\newcommand{\ad}{\operatorname{ad}}

\newcommand{\Sect}{{\rm Sect}}

\newcommand{\adjoint}{\mathrm{ad}}

\newcommand{\mathi}{\mathrm{i}}

\newcommand{\rednote}[1]{\textcolor{red}{#1}}

\newcommand{\br}[1]{   [ \cdot,    \cdot  ]   }

%% file: final_preamble/specialized.tex
\newcommand {\hreg}{\mathfrak h_{\rm reg}}

\newcommand{\bi}{\bar{i}}
\newcommand{\bj}{\bar{j}}
\newcommand{\bk}{\bar{k}}
\newcommand{\bl}{\bar{l}}

%% file: final_sections/1Introduction.tex
\section{Introduction}\label{sec:1}
The Stokes phenomenon of meromorphic linear systems of ordinary differential equations with a second order pole has been intensively studied over the past decades, both for its own analytic interest and for its deep connections with other subjects, like the theory of isomonodromy deformation \cite{JMU}, the Gromov-Witten theory \cite{Dubrovin}, the stability condition \cite{Bridgeland} and so on.

More closely related to our present paper is the connections to Poisson Lie groups and quantum groups, briefly recalled as follows. 
%On the analytic side, the theory of classical ODEs and the multisummability theory of Malgrange and Ramis~\cite{MR}, Wasow \cite{Wasow}, Balser~\cite{Balser} and Loday-Richaud~\cite{LR} provide a complete description of the canonical sectorial solutions and their Stokes matrices. Moreover, the fundamental work of Jimbo, Miwa and Ueno~\cite{JMU} endows the Stokes matrices the role of monodromy data of irregular systems that can be studied under the framework of isomodromic deformations.
In \cite{Boalch1} Boalch discovered that the space of Stokes matrices is identified with the dual Poisson Lie groups, and proved that the Riemann-Hilbert-Birkhoff map is a Poisson map. Later on, in \cite{Boalch2,Boalch4} he developed the Poisson geometry on the moduli spaces of meromorphic connections with arbitrary order poles (not just second order).
In \cite{TL2016} Toledano Laredo  studied the Stokes phenomenon of the Knizhnik-Zamolodchikov (KZ) equation with second order poles. He used the connection matrix
of the KZ equation to construct a twist killing the KZ associator, and thus obtain a quasi-triangular Hopf algebra that is equivalent to the Drinfeld-Jimbo quantum groups. In \cite{TLX2023} Toledano Laredo and the third author introduced the quantum Stokes matrices of the KZ equation with a second order pole, and proved that the semiclassical limit of the quantum Stokes matrices coincides with the Stokes matrices of the (classical) meromorphic linear systems, thereby providing a quantization of the Riemann-Hilbert-Birkhoff map for the case of second order pole. 

Motivated by the above works, in \cite{Xu, Xu2019} the third author studied the Stokes phenomenon and WKB approximation of the quantum confluent hypergeometric system associated to $\mathfrak{gl}_n$. He proved that the Stokes matrices give rise to the Faddeev--Reshetikhin--Takhtajan (FRT) realization of the quantum group $U_q(\mathfrak{gl}_n)$, and the WKB approximation of the Stokes matrices give rise to the $\mathfrak{gl}_n$-crystals. Then in \cite{TX2025}, using the theory of difference equations, Tang and the third author realized the Stokes matrices as infinite products of the representations of Yangian, and thus gave a new interpretation why quantum groups arise from the Stokes phenomenon. Recently, the third author \cite{Xu2026} studies the Stokes phenomenon of the quantum meromorphic linear system, certain KZ type equations, with arbitrary order poles, generalizing the results in \cite{Xu} from the case of second order poles to arbitrary orders.

However, as far as we know, the Stokes phenomenon for super (i.e.,~$\mathbb{Z}_2$-graded) meromorphic linear systems has not been investigated, let alone its connection with quantum supergroups, despite the fact that superalgebras play an important role in mathematical physics. 

In this paper, we introduce the Stokes (super)matrices of the quantum confluent hypergeometric supersystem, and prove that they give rise to the quantum supergroup $U_q(\mathfrak{gl}(m|n))$.

\medskip
\noindent\textbf{The quantum confluent hypergeometric supersystem.}
Let $\mathfrak{gl}_{m|n}$ be the general linear Lie superalgebra over the complex numbers $\mathbb{C}$. Let $U(\mathfrak{gl}_{m|n})$ be its universal enveloping algebra, with generators $\{e_{kl}\}_{k,l=1,...,m+n}$ and relations
\begin{align}
    e_{ij}\cdot e_{kl}-(-1)^{(\bar{i}+\bar{j})(\bar{k}+\bar{l})}e_{kl}\cdot e_{ij}=\delta_{jk}e_{il}-(-1)^{(\bar{i}+\bar{j})(\bar{k}+\bar{l})}\delta_{li}e_{kj}.
\end{align} 
Take any finite dimensional indecomposable representation $V$ of $U(\mathfrak{gl}_{m|n})$, consider the following meromorphic linear differential system for a function $F(z)$ valued in the 
tensor product $\End(V)\otimes\End(\mathbb{C}^{m|n})$ of superalgebras:

\begin{align}\label{eq:MonoSingularSys}
\frac{\mathrm{d} F}{\mathrm{d} z}= h\left(u+\frac{T}{z}\right) F(z),
\end{align}
where $h$ is a complex parameter, 
\begin{align*}
u=\sum_{i=1}^{m+n}1\otimes u_iE_{ii}\in \End(V) \otimes \End(\IC^{m|n}),
\quad \text{with } u_1,...,u_{m+n} \text{ distinct complex numbers},
\end{align*}
and
\begin{align*}
T=\sum_{k,l=1}^{m+n}(-1)^{\bar k\bar l}e_{kl}\otimes E_{kl} \in \End(V) \otimes \End(\IC^{m|n}).
\end{align*}
Here $E_{ij}\in \End(\mathbb C^{m|n})$ are the matrix units, $e_{kl}\in \End(V)$ is the image of $e_{kl}$ under the given representation.

System~\eqref{eq:MonoSingularSys} has an irregular singularity of Poincar\'e rank $1$ at $z=\infty$ and a regular singularity at $z=0$. For $h\notin\mathbb{Q}$, it has a unique formal fundamental solution $\widehat{F}(z)$ which is in general divergent, see Proposition \ref{prop:formal-solution-sec2}. The standard Borel--Laplace summation theory~\cite{Balser,LR,Wasow} carries to the super setting, since the (super)algebra $\mathcal{A}_V:=\mathrm{End}(V)\otimes\mathrm{End}(\mathbb{C}^{m|n})$ is a Banach algebra and the Koszul signs do not affect analytic estimates. Consequently, for any admissible direction $d$, there exists an unique (therefore canonical) holomorphic fundamental solution $F_d(z)$ with prescribed asymptotics $\widehat{F}(z)$, as $z\rightarrow\infty$ within a sector of opening $\pi$ centered at the direction $d$, see Proposition \ref{prop:canonical-solution-sec2}. One defines the Stokes supermatrices $S_d^{\pm}(u)\in\mathcal{A}_V$ as the transition matrices between the canonical solutions associated to admissible directions $d-\pi,d$ and $d+\pi$:
\begin{equation}
    F_d(z)=F_{d\pm\pi}(z)\cdot S_d^{\pm}(u).
\end{equation}

\medskip
\noindent\textbf{Main results.}
Our main result states that the Stokes (super)matrices of \eqref{eq:MonoSingularSys} gives rise the FRT realization of $U_q(\mathfrak{gl}(m|n))$, see Theorem \ref{thm:RLL-relation}. An
equivalent statement is as follows. 
%let $R\in\mathrm{End}(\mathbb{C}^{m|n})\otimes\mathrm{End}(\mathbb{C}^{m|n})$ be the standard super $R$-matrix of $U_q(\mathfrak{gl}(m|n))$ with $q=e^{\pi\mathrm{i}h}$ (cf. \cite{BrackenGouldZhang1990, Khoroshkin1991}, see \eqref{eq:standard-r-matrix} for its expression), and 
Let
\begin{equation}\label{eq:intro-renorm-stokes}
S_{h+}(u):=S_d^+(u)\,e^{\pi\mathrm{i}h\delta T},\qquad
S_{h-}(u):=e^{-\pi\mathrm{i}h\delta T}\,S_d^-(u)
\end{equation}
be the renormalized Stokes supermatrices by adding the formal monodromy part $e^{\pi\mathrm{i}h\delta T}$ (see Definition~\ref{def:renorm-Stokes}). Let us write
\begin{equation*}
    S_{h\pm}(u)=\sum_{i,j=1}^{m+n}S_{h\pm}(u)_{i,j}\otimes E_{ij}\in \End(V)\otimes \End(\mathbb{C}^{m|n}).
\end{equation*}

\begin{thm}\label{thm:intro-Rep}
For any $h\notin\mathbb{Q},\,u\in\h_{\rm reg}$, let $q=e^{\pi\mathi h}$, set $d_i=(-1)^{\bar{i}}$ and $q_i=q^{d_i}$ for $i\!=\!1,\cdots m\!+\!n$, the map
\begin{align*}
\mathcal{S}_q(u):U_q(\mathfrak{gl}(m|n))\longrightarrow \mathrm{End}(V),
\end{align*}
where
\begin{align*}
e_i\longmapsto \frac{S_{h-}(u)_{i,i}\cdot S_{h+}(u)_{i,i+1}}{q_i^{-1}-q_i},\qquad
f_i\longmapsto \frac{S_{h-}(u)_{i+1,i}\cdot S_{h+}(u)_{i,i}}{q_i-q_i^{-1}},\qquad
K_j\longmapsto \big(S_{h+}(u)_{j,j}\big)^{d_j},
\end{align*}
is a representation of the Drinfeld--Jimbo quantum supergroup $U_q(\mathfrak{gl}(m|n))$ on the vector space $V$ (see Definition \ref{def:DJ-quantumSupergroup} for the definition and generators $(e_i,f_i,K_i)$ of $U_q(\mathfrak{gl}(m|n))$).
\end{thm}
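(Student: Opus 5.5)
The plan is to derive Theorem~\ref{thm:intro-Rep} from the RLL-type relations of Theorem~\ref{thm:RLL-relation}, by an FRT-type reconstruction. Write $L^{+}:=S_{h+}(u)$ and $L^{-}:=S_{h-}(u)$, both viewed in $\End(V)\otimes\End(\mathbb{C}^{m|n})$.

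\textbf{Step 1: triangularity and invertibility.} First I will establish the structural facts about the Stokes supermatrices. By the choice of admissible direction $d$ and the ordering of the $u_i$, the matrix $S_d^{+}(u)$ is block upper triangular and $S_d^{-}(u)$ is lower triangular. Their diagonal parts are governed by the formal monodromy $e^{\pi\mathi h\delta T}$, where $\delta T=\sum_i e_{ii}\otimes E_{ii}$ (up to the sign conventions of the paper). From this I expect:
\begin{itemize}
\item $L^{+}$ is upper triangular and $L^{-}$ is lower triangular;
\item the diagonal entries satisfy $L^{+}_{ii}\,L^{-}_{ii}=1$ and are invertible, being essentially $q^{\pm d_i e_{ii}}$ corrected by the triangular structure.
\end{itemize}

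\textbf{Step 2: entrywise relations.} Next I will invoke Theorem~\ref{thm:RLL-relation}, which gives
\[
R_{12}L^{\pm}_1L^{\pm}_2=L^{\pm}_2L^{\pm}_1R_{12},\qquad R_{12}L^{+}_1L^{-}_2=L^{-}_2L^{+}_1R_{12},
\]
with the standard super $R$-matrix at $q=e^{\pi\mathi h}$. I will expand these relations entrywise, keeping track of the graded tensor product signs $(-1)^{(\bar i+\bar j)(\bar k+\bar l)}$. This is exactly the super-FRT presentation (as in Zhang's or Khoroshkin--Tolstoy's treatment of $U_q(\mathfrak{gl}(m|n))$). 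Hence the map $L^{\pm}_{ij}\mapsto$ FRT generators defines an algebra homomorphism from the FRT algebra $U(R)$ to $\End(V)$.

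\textbf{Step 3: composing with the Ding--Frenkel type isomorphism.} The isomorphism between $U(R)$, modulo $L^{+}_{ii}L^{-}_{ii}=1$, and the Drinfeld--Jimbo algebra of Definition~\ref{def:DJ-quantumSupergroup} sends
\[
K_j \mapsto (L^{+}_{jj})^{d_j},\qquad e_i \mapsto \frac{L^{-}_{ii}L^{+}_{i,i+1}}{q_i^{-1}-q_i},\qquad f_i \mapsto \frac{L^{-}_{i+1,i}L^{+}_{ii}}{q_i-q_i^{-1}}.
\]
For the proof I will not need the full isomorphism; it suffices to verify directly that these images satisfy the Drinfeld--Jimbo relations. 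The relations split into the following groups.
\begin{itemize}
\item $K$–$K$ commutation and the $K e_i K^{-1}$ relations. These follow from the diagonal RLL relations with indices $(j,j;i,i+1)$.
\item The $[e_i,f_j]$ relation. This uses the mixed relation $R L^{+}_1L^{-}_2=L^{-}_2L^{+}_1R$ at the index quadruple $(i,i+1;i+1,i)$, together with $L^{+}_{ii}L^{-}_{ii}=1$. For odd $i=m$ it produces a supercommutator, with $e_m$ and $f_m$ odd and $q_m$ adjusted by $d$.
\item Quantum Serre relations for $|i-j|>1$ and $|i-j|=1$. These follow from $L^{+}L^{+}$ relations expressing $L^{+}_{i,i+2}$ as a $q$-commutator of $L^{+}_{i,i+1}$ and $L^{+}_{i+1,i+2}$.
\item The extra super Serre relations at the odd node, namely $e_m^2=0$ and the quartic relation $[[[e_{m-1},e_m]_q,e_{m+1}]_q,e_m]=0$. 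Here $e_m^2=0$ comes from the $(m,m+1;m,m+1)$ entry, where the odd grading makes the $R$-matrix diagonal coefficient $q_{m+1}=q^{-1}$ force $(L^{+}_{m,m+1})^2=0$ after the sign $(-1)$ is accounted for.
\end{itemize}

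\textbf{Main obstacle.} I expect the hardest part to be the sign bookkeeping in the odd Serre relations, specifically the quartic one, which needs the RLL relations for entries $L^{+}_{m-1,m+1}$ and $L^{+}_{m,m+2}$ in combination. I would first reduce it to known super-FRT computations in the literature, checking that the normalization $q_i=q^{d_i}$ and the ordering of factors in $e_i$ agree with those references. Only if they do not agree would I do the direct computation in the rank $2|2$ subcase, since the relation only involves indices $m-1,\dots,m+2$.
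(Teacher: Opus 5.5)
Your proposal is correct and follows the paper's proof. The paper likewise uses triangularity, the unitarity $S_{h+}(u)_{ii}S_{h-}(u)_{ii}=1$ and the RLL relations of Theorem~\ref{thm:RLL-relation} to get a homomorphism $U(R)\to\End(V)$, composes it with the super Ding--Frenkel map $\Phi$ of Proposition~\ref{prop:superDing-Frenkel}, and cites the literature for that map; your optional direct check of the Drinfeld--Jimbo relations just reproves the homomorphism direction of that cited result. One small correction: the diagonal entries are exactly $q^{\pm d_i e_{ii}}$ with no triangular correction, since $S^\pm_d(u)$ has unit diagonal.
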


\begin{comment}
\begin{thm}\label{thm:intro-RLL}
For any $h\notin\mathbb{Q}$ and $u\in\mathfrak{h}_{\mathrm{reg}}$, the
Stokes supermatrices satisfy the following identities in $\mathrm{End}(V)\otimes\mathrm{End}(\mathbb{C}^{m|n})\otimes\mathrm{End}(\mathbb{C}^{m|n})$:
\begin{equation}\label{eq:intro-RLL-1}
R^{12} S_{h\pm}^{(1)}(u) S_{h\pm}^{(2)}(u)
= S_{h\pm}^{(2)}(u) S_{h\pm}^{(1)}(u) R^{12},
\end{equation}
\begin{equation}\label{eq:intro-RLL-2}
R^{12} S_{h+}^{(1)}(u) S_{h-}^{(2)}(u)
= S_{h-}^{(2)}(u) S_{h+}^{(1)}(u) R^{12},
\end{equation}
where the superscripts $(1),(2)$ {\color{red}denote the tensor factors on which the operators act in the super sense}.
\end{thm}
Since the construction is universal, the theorem shows that the Stokes supermatrices give rise to the Faddeev--Reshetikhin--Takhtajan (FRT) formalism of the quantum supergroup $U_q(\mathfrak{gl}(m|n))$. It generalizes the main theorem of~\cite{Xu} from the Lie algebra $\mathfrak{gl}_n$ to the Lie superalgebra $\mathfrak{gl}(m|n)$.
\end{comment}
\begin{rmk}
If $h=h_0\in\mathbb{Q}$, then the corresponding differential equation may become resonant. In this case, the uniqueness of the formal fundamental solution (see Proposition \ref{prop:formal-solution-sec2}) is not valid. Instead, the equation \eqref{eq:MonoSingularSys} has a family of formal solution depending on a finite set $c$ of complex parameters. Accordingly, there are a family of Stokes matrices $S_{h_0\pm}(u;c)$ depending on the same set of parameters. 
However, there is a pair of distinguished Stokes matrices, denoted by $S_{h_0\pm}^o(u)$, among the family $S_{h_0\pm}(u;c)$. They are actually the continuous extension of $S_{h\pm}(u)$ from $h\in\mathbb{C}\setminus \mathbb{Q}$ to $h_0\in\mathbb{Q}$.Therefore, for $h=h_0\in\mathbb{Q}$, Theorem \ref{thm:intro-Rep} holds true provided taking the Stokes matrices $S_{h_0\pm}^o(u)$. See \cite{Xu} for a detailed discussion for the case of Lie algebra $\mathfrak{gl}_n$.
\end{rmk}

Theorem \ref{thm:intro-Rep} generalizes the main result of~\cite{Xu} from the Lie algebra $\mathfrak{gl}_n$ to the Lie superalgebra $\mathfrak{gl}_{m|n}$. It states that, starting from a representation of the Lie superalgebra, one obtains a representation of the corresponding quantum supergroup by taking the Stokes supermatrices. The construction is universal, thus providing a realization of $U_q(\mathfrak{gl}(m|n))$ via the Stokes phenomenon. We would like to highlight that, when $V$ is taken to be the natural representation of the Lie superalgebra, the Stokes supermatrices of the equation \eqref{eq:MonoSingularSys} can be computed explicitly and coincide precisely with the standard $R$-matrix of the quantum supergroup, see Theorem \ref{thm:super-r-matrix}. This computation clearly reveals the influence of odd variables on the process of solving the differential equation. 

In future work, we plan to deepen the relation between the Stokes phenomenon in the presence of odd variables and the representation theory of Lie superalgebras. In particular, we plan to study correspondences between the long time asymptotics of isomonodromy deformation equations and the Gelfand–Tsetlin basis, Stokes supermatrices and super Yangians, and the WKB approximation and crystals for quantum supergroups, thereby generalizing the results of \cite{TX2025, Xu2019} to the super setting. These generalizations will be more than just parallel extensions. Rather, we expect that, on the one hand, they will make more essential use of the representation theory of Lie superalgebras, and on the other hand, new structures will emerge in the Stokes phenomenon when odd variables come into play. For example, we can imagine that when odd variables are present, the formal divergent power series solutions of differential equations undergo truncation in the odd variables, potentially altering the structures in the resummation theory.

\medskip
\noindent\textbf{Organization of the paper.}
Section~\ref{sec:2} introduces the canonical solutions and the Stokes (super)matrices of the equation \eqref{eq:MonoSingularSys}. It then computes explicitly the Stokes matrices for the $V=\mathbb{C}^{m|n}$ case, and identifies them with the
standard super $R$-matrix for quantum supergroup. Section~\ref{sec:3} proves Theorem~\ref{thm:RLL-relation} (the FRT realization of $U_q(\mathfrak{gl}(m|n))$ via Stokes supermatrices), by studying the Stokes phenomenon and connection formula for the super irregular KZ equations. In Section \ref{sec:4}, the $RLL=LLR$ relation is used to prove Theorem \ref{thm:intro-Rep}.

%% file: final_sections/2StokesMatrices.tex
\section[Stokes matrices of the equation in the nonresonant case]%
        {Stokes matrices of the equation \eqref{eq:MonoSingularSys} in the nonresonant case}%
\label{sec:2}

In this section we study the Stokes phenomenon of the superalgebra-valued equations \eqref{eq:MonoSingularSys}.
\begin{comment}
\begin{equation}\label{eq:main-system-sec2}
\frac{dF}{dz}
=
h\Bigl(u+\frac{T}{z}\Bigr) F,
\end{equation}
where $V$ is a fixed finite-dimensional $U(\mathfrak{gl}(m|n))$-module, as defined in the introduction. 
\end{comment}
We first introduce the concept of (Lie) superalgebras and describe the Banach algebra where our equations take values. Then, using classical analytical techniques, we define the canonical solutions of the equations and analyze their Stokes phenomenon in this Banach space.

\subsection{The normed super algebra \texorpdfstring{$\mathcal A_V$}{A\_V}}\label{sec:2.1}

We begin by briefly recalling the superalgebra conventions used throughout this paper. For more details, see e.g. \cite{Kac1977, Manin1997,Carmeli2011}. A \emph{super vector space}
is a $\mathbb Z_2$-graded vector space $W=W_{\bar 0}\oplus W_{\bar 1}$. An element $w\in W$ is called
\emph{homogeneous} if $w\in W_{\bar 0}$ or $w\in W_{\bar 1}$; in that case we write $|w|\in \mathbb Z_2$ for its
parity. A \emph{superalgebra} is an associative algebra $A=A_{\bar 0}\oplus A_{\bar 1}$ whose multiplication
respects the grading, namely $A_{\bar a}A_{\bar b}\subset A_{\bar a+\bar b}$ for $\bar a,\bar b\in \mathbb Z_2$.
For homogeneous elements $x,y\in A$, their super commutator is defined by
\begin{equation}\label{eq:super-commutator-sec2}
[x,y]
:=
xy-(-1)^{|x||y|}yx.
\end{equation}

The natural superspace is $\mathbb{C}^{m|n} = \mathbb{C}^{m|n}_{\bar{0}} \oplus \mathbb{C}^{m|n}_{\bar{1}}$, where $\dim \mathbb{C}^{m|n}_{\bar{0}} = m$ and $\dim \mathbb{C}^{m|n}_{\bar{1}} = n$. Any super vector space $V = V_{\bar{0}} \oplus V_{\bar{1}}$ with $\dim V_{\bar{0}} = m$ and $\dim V_{\bar{1}} = n$ is isomorphic to $\mathbb{C}^{m|n}$ upon choosing a homogeneous basis. 
The space of linear endomorphisms $\End(V)$ naturally inherits a $\mathbb{Z}_2$-grading, $\End(V) = \End(V)_{\bar{0}} \oplus \End(V)_{\bar{1}}$, defined by
\[
\End(V)_{\bar{a}} := \{ f \in \End(V) \mid f(V_{\bar{b}}) \subset V_{\bar{a}+\bar{b}} \text{ for all } \bar{b} \in \mathbb{Z}_2 \}.
\]
Equipped with the composition of mappings as multiplication, $\End(V)$ becomes a superalgebra. 
\begin{comment}
Moreover, via the chosen homogeneous basis, the endomorphism algebra $\End(V)$ is identified with the standard matrix superalgebra $\text{M}(m|n, \mathbb{C})$. Under this identification, an element $X \in \End(\mathbb{C}^{m|n})$ can be represented as an $(m+n) \times (m+n)$ block matrix
\begin{equation}\label{eq:block_matrix}
X = \begin{pmatrix} A & B \\ C & D \end{pmatrix},
\end{equation}
where $A, B, C, D$ are matrices of sizes $m \times m$, $m \times n$, $n \times m$, and $n \times n$, respectively. $X \in \End(\mathbb{C}^{m|n})_{\bar{0}}$ if and only if $B=0$ and $C=0$, while $X \in \End(\mathbb{C}^{m|n})_{\bar{1}}$ if and only if $A=0$ and $D=0$.
\end{comment}
More precisely, let us fix a homogeneous basis $\{e_1, \dots, e_{m+n}\}$ of $\mathbb{C}^{m|n}$ such that $e_1, \dots, e_m$ are even and $e_{m+1}, \dots, e_{m+n}$ are odd. We write $\bar{i} := |e_i| \in \mathbb{Z}_2$ for the parity of the basis vector $e_i$. Under this basis, the endomorphism algebra $\End(\mathbb{C}^{m|n})$ is spanned by the matrix units $E_{ij} \in \End(\mathbb{C}^{m|n})$, whose action on the basis vectors is given by $E_{ij}e_k = \delta_{jk}e_i$. The $\mathbb{Z}_2$-grading of the matrix units is then explicitly given by their parities $|E_{ij}| = \bar{i} + \bar{j}$.

The Lie superalgebra $\mathfrak{gl}_{m|n}$ is generated by the standard elements $\{e_{ij}\}_{1\le i,j\le m+n}$. The $\mathbb{Z}_2$- grading of $e_{ij}$ is $\bar i+\bar j$, where $\bar{i} $ equals to $0$ if $i\leq m$ and $1$ if $i>m$. The commutation relations in this basis are given by
\begin{equation}\label{eq:glmn-super-rel-sec2}
[e_{ij},e_{kl}]
=
\delta_{jk}e_{il}
-
(-1)^{(\bar i+\bar j)(\bar k+\bar l)}\delta_{li}e_{kj}.
\end{equation}
Let $V$ be a fixed finite-dimensional indecomposable $U(\mathfrak{gl}_{m|n})$-module. As in the rest of the paper, we
suppress the representation map $\rho:U(\mathfrak{gl}_{m|n})\to \End(V)$ from the notation, and regard the
generators $e_{ij}$ as endomorphisms of $V$.

Once the representation $V$ is fixed, all analytic constructions will take place in the finite-dimensional algebra
\begin{equation}
\mathcal{A}_V := \End(V) \otimes \End(\mathbb{C}^{m|n}).
\end{equation}
Every element of $\mathcal{A}_V$ can be written uniquely as a linear combination $A = \sum_{i,j} A_{ij} \otimes E_{ij}$ with $A_{ij} \in \End(V)$. The tensor product here (and throughout this paper) is also a superalgebra: for homogeneous elements $a, a' \in \End(V)$ and $b, b' \in \End(\mathbb{C}^{m|n})$, one sets
\begin{equation}\label{eq:super_tensor}
(a \otimes b)(a' \otimes b') = (-1)^{|b||a'|} aa' \otimes bb'.
\end{equation}

\begin{comment}
Accordingly, every element of $\mathcal{A}_V$ can be written uniquely as a linear combination $A = \sum_{i,j} A_{ij} \otimes E_{ij}$ with $A_{ij} \in \End(V)$. In practical computations, it is convenient to identify the operator $A = \sum_{i,j} A_{ij} \otimes E_{ij}$ with the matrix whose $(i,j)$-entry is $\tilde{A}_{ij} = A_{ij} (-1)^{\bar{j}(\bar{i}+\bar{1})}$. This sign modification directly allows us to compute the super-tensor product using ordinary matrix multiplication for $(\tilde{A}_{ij})$.
\end{comment}

\begin{comment}
and may therefore be viewed as an
$(m+n)\times(m+n)$ matrix with entries in $\End(V)$. If
$A=\sum_{i,j}A_{ij}\otimes E_{ij}$ and $B=\sum_{i,j}B_{ij}\otimes E_{ij}$, then their product in $\mathcal A_V$
takes the form
\begin{equation}\label{eq:supermatrix-product-sec2}
(A\cdot_s B)_{ij}
=
\sum_{k=1}^{m+n}
(-1)^{|E_{ik}||B_{kj}|}\,A_{ik}B_{kj}.
\end{equation}
We shall use the notation $\cdot_s$ for the product \eqref{eq:supermatrix-product-sec2} and refer to it as the \rednote{supermatrix} product, and simply $\cdot$ for the ordinary product when no ambiguity can arise. The two products differ only by the Koszul signs coming from the super tensor product structure. In particular, if one factor consists entirely of even elements (such as a diagonal matrix with even entries), then its product with any $\mathcal A_V$-valued function coincides with the ordinary matrix product. This convention will be used repeatedly below.
\end{comment}

In the paper we use $1$ to denote the unit element (identity endomorphism) in $\End(V)$ and $I$ to denote the unit element $\sum_i 1\otimes E_{ii}$ in $\mathcal{A}_V=\End(V)\otimes \End(\mathbb{C}^{m|n})$.

We now choose a norm on $\mathcal A_V$ adapted to the supermatrix realization above. Fix any norm
$|\cdot|_V$ on the finite-dimensional vector space $V$, and let $\|\cdot\|_{\End(V)}$ be the induced operator
norm on $\End(V)$. For an element $x=\sum_{j=1}^{m+n}x_j\otimes e_j\in V\otimes \mathbb C^{m|n}$, define
\begin{equation}\label{eq:vector-norm-sec2}
|x|_V:=\max_{1\le j\le m+n}|x_j|_V.
\end{equation}
For an element $A=\sum_{i,j=1}^{m+n}A_{ij}\otimes E_{ij}\in \mathcal A_V$, define
\begin{equation}\label{eq:algebra-norm-sec2}
\|A\|
:=
\max_{1\le i\le m+n}\sum_{j=1}^{m+n}\|A_{ij}\|_{\End(V)}.
\end{equation}

The function $\|\cdot\|$ is a norm on $\mathcal A_V$, and for any $A,B\in \mathcal A_V$ and
$x\in V\otimes \mathbb C^{m|n}$ one has
\begin{equation}\label{eq:submultiplicative-sec2}
\|A\cdot B\|\le \|A\|\,\|B\|,
\end{equation}
and
\begin{equation}\label{eq:action-estimate-sec2}
|A x|_V\le \|A\|\,|x|_V.
\end{equation}
Thus $\mathcal A_V$ is a finite-dimensional normed algebra, and all analytic estimates in the sequel can be
carried out  as in the classical matrix-valued case. Moreover, as all norms on finite-dimensional spaces are
equivalent, the constructions below do not depend on the particular choice of $|\cdot|_V$.

\subsection{The unique formal fundamental solution in the nonresonant case}\label{sec:2.2}

By analogy with the classical meromorphic linear systems with an irregular singularity at infinity, we look for a
formal fundamental solution of \eqref{eq:MonoSingularSys} in the form
\begin{equation}\label{eq:formal-solution-sec2}
\widehat F(z)=\widehat H(z)\cdot e^{huz}z^{h\delta T},
\qquad
\widehat H(z)=I+\sum_{r\ge1}H_r z^{-r},\quad H_r\in \mathcal A_V.
\end{equation}
Here 
\begin{equation}\label{eq:delta-T-sec2}
\delta T:=\sum_{i=1}^{m+n}(-1)^{\bar i}e_{ii}\otimes E_{ii}\in \mathcal A_V, 
\end{equation}
is the diagonal part of $T$. Moreover, $u=\sum_i1\otimes u_iE_{ii}$ commutes with $\delta T$. For a diagonal element $D=\sum_i D_i\otimes E_{ii}\in \mathcal A_V$, we write
\[
\exp(D):=\sum_i e^{D_i}\otimes E_{ii}.
\]
In particular, on the universal cover $\widetilde{\mathbb C^\times}$ we set
\[
z^{h\delta T}:=\exp\bigl(h(\log z)\delta T\bigr).
\]

Thus the problem is reduced to determining the formal series $\widehat H(z)$. The following proposition shows
that in the nonresonant case this formal solution exists and is uniquely defined.

\begin{pro}\label{prop:formal-solution-sec2}
Assume that $h\notin\mathbb Q$ and $u_i\neq u_j$ for $i\neq j$. Then \eqref{eq:MonoSingularSys} admits a unique
formal fundamental solution of the form \eqref{eq:formal-solution-sec2}. Moreover, let
\[
H_r=\sum_{i,j=1}^{m+n}H_{r,ij}\otimes E_{ij},
\qquad H_{r,ij}\in \End(V),
\]
then each $H_{r,ij}$ is homogeneous of parity $\bar i+\bar j$.
\end{pro}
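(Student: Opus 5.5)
Substitute the ansatz \eqref{eq:formal-solution-sec2} into \eqref{eq:MonoSingularSys} and reduce to a recursion for the coefficients $H_r$. Since $u$ and $\delta T$ are diagonal with even entries and commute, and $\frac{d}{dz}\bigl(e^{huz}z^{h\delta T}\bigr)=h\bigl(u+\frac{\delta T}{z}\bigr)e^{huz}z^{h\delta T}$, the equation becomes
\[
\widehat H'(z)+h\,\widehat H(z)\Bigl(u+\frac{\delta T}{z}\Bigr)=h\Bigl(u+\frac{T}{z}\Bigr)\widehat H(z).
\]
Comparing coefficients of $z^{-r-1}$ gives, for $r\ge 0$ with $H_0=I$,
\[
h[u,H_{r+1}]=-rH_r+h\bigl(H_r\,\delta T-T H_r\bigr).
\]
Here $[u,\cdot]$ is the ordinary commutator; it equals the super commutator because $u$ is even. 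Write $H_r=\sum H_{r,ij}\otimes E_{ij}$ and use \eqref{eq:super_tensor}. Since $u=\sum 1\otimes u_iE_{ii}$ involves only the identity of $\End(V)$, we get $[u,H_{r+1}]_{ij}=(u_i-u_j)H_{r+1,ij}$. Hence the off-diagonal entries are determined:
\[
(u_i-u_j)\,h\,H_{r+1,ij}=\bigl(-rH_r+h(H_r\delta T-TH_r)\bigr)_{ij},\qquad i\neq j.
\]

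The diagonal part of $H_{r+1}$ comes from the diagonal of the equation at the next order, $r+1$. That equation reads
\[
0=-(r+1)H_{r+1,ii}+h\bigl(H_{r+1}\delta T-TH_{r+1}\bigr)_{ii},
\]
whose left side is the $(i,i)$ entry of $h[u,H_{r+2}]$, which vanishes. Expanding with the Koszul signs, the terms involving $H_{r+1,ii}$ are
\[
-(r+1)H_{r+1,ii}+h(-1)^{\bar i}\bigl(H_{r+1,ii}e_{ii}-e_{ii}H_{r+1,ii}\bigr).
\]
No extra sign appears: $E_{ii}$ is even, and so is $e_{ii}$. The remaining terms involve only off-diagonal entries $H_{r+1,ki}$ with $k\neq i$, which are already determined. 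So $H_{r+1,ii}$ is obtained by inverting the operator
\[
X\mapsto (r+1)X+h(-1)^{\bar i}[e_{ii},X]
\]
on $\End(V)$. Its eigenvalues are $(r+1)+h(-1)^{\bar i}(\lambda-\mu)$, where $\lambda,\mu$ are eigenvalues of $e_{ii}$ on $V$.

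\textbf{Main obstacle: invertibility.} This step needs $\lambda-\mu\in\mathbb Q$, and I expect it to be the hard part. If it holds, then $h\notin\mathbb Q$ and $r+1\neq 0$ give nonvanishing eigenvalues, since $(r+1)+h(\lambda-\mu)=0$ would force $h=-(r+1)/(\lambda-\mu)$, which is rational when $\lambda\ne\mu$. To get $\lambda-\mu\in\mathbb Z$, I would use that $V$ is finite-dimensional and indecomposable. On each generalized weight space decomposition, the weights of $\mathfrak h$ differ by elements of the root lattice, because $V$ is generated under the action of the $e_{kl}$ from a single generalized weight block; otherwise a direct-sum decomposition by cosets of the root lattice would contradict indecomposability. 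Since $e_{ii}$ pairs integrally with the roots $\epsilon_k-\epsilon_l$, the differences $\lambda-\mu$ are integers. This is exactly where the indecomposability hypothesis is used. The induction then determines every $H_r$ uniquely, and uniqueness of the whole formal solution follows, because each step is forced.

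\textbf{Parity.} Proceed by induction on $r$. Say $A\in\mathcal A_V$ is \emph{even-homogeneous} if each $A_{ij}$ has parity $\bar i+\bar j$; equivalently, $A$ is an even element of $\mathcal A_V$. The elements $I$, $u$, $\delta T$ and $T$ are all even, since $e_{kl}$ has parity $\bar k+\bar l$. The even part of $\mathcal A_V$ is a subalgebra, so the right-hand sides above are even. Dividing by scalars preserves this. Inverting an operator that preserves parity on $\End(V)$, which the map $X\mapsto (r+1)X+h(-1)^{\bar i}[e_{ii},X]$ does since $e_{ii}$ is even, also preserves this. Hence each $H_{r+1,ij}$ is homogeneous of parity $\bar i+\bar j$. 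An alternative is to note that the grading involution of $\mathcal A_V$ maps solutions to solutions, and then conclude from uniqueness.
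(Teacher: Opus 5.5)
Your proof is correct and follows essentially the same route as the paper. You derive the same recursion, fix the off-diagonal entries of $H_{r+1}$ using $u_i\neq u_j$, fix the diagonal entries by inverting $\frac{r+1}{h}+(-1)^{\bar i}\mathrm{ad}_{e_{ii}}$ using $h\notin\mathbb Q$ and integrality of the spectrum of $\mathrm{ad}_{e_{ii}}$, and get parity by induction. Your root-lattice coset argument supplies the justification for that integrality, which the paper only asserts from indecomposability, and you handle the index shift in the diagonal step more carefully than the paper's write-up does.
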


\begin{proof}
Substituting \eqref{eq:formal-solution-sec2} into \eqref{eq:MonoSingularSys} and comparing the coefficients of
$z^{-r-1}$, we obtain
\begin{equation}\label{eq:recursion-sec2}
[H_{r+1},u]
=
\frac{r}{h}H_r+T\cdot H_r-H_r\cdot\delta T,
\qquad r\ge0.
\end{equation}

We prove simultaneously, by induction on $r$. First It is clear that when $r=0$ $H_0=I$.
Assume that $H_r=\sum_{i,j}H_{r,ij}\otimes E_{ij}$ is already known and that each $H_{r,ij}$ has parity
$\bar i+\bar j$. Then the $(i,j)$-entry of the right-hand side of \eqref{eq:recursion-sec2} is homogeneous of
parity $\bar i+\bar j$. Indeed, the term $\frac{r}{h}H_{r,ij}$ has this parity, each summand of
$(T\cdot H_r)_{ij}$ has parity
$
(\bar i+\bar k)+(\bar k+\bar j)=\bar i+\bar j,
$
and $(H_r\delta T)_{ij}$ has the same parity because $\delta T$ is even diagonal.

Now consider the $(i,j)$-entry of \eqref{eq:recursion-sec2}. Since $u$ is diagonal, it is equal to
\begin{align}
\label{eq:off-diagonal-sec2}
(u_j-u_i)H_{r+1,ij}
=\frac{r}{h}H_{r,ij}
+
\sum_{l\neq j}
(-1)^{\bar i\bar j+\bar l+\bar l\bar j}\,
e_{il}H_{r,lj}
+(-1)^{\bar{i}\bar{j}}e_{ij}
H_{r,jj}
-
(-1)^{\bar j}H_{r,ij}e_{jj}
\end{align}
For $i\neq j$, the scalar $u_j-u_i$ is nonzero, so this determines $H_{r+1,ij}$ uniquely. Moreover,
$H_{r+1,ij}$ is homogeneous of parity $\bar i+\bar j$. For $i=j$, the left-hand side vanishes and one obtains an equation of the form
\begin{equation}\label{eq:diagonal-sec2}
\Bigl(\frac{r}{h} \mathrm{Id}+(-1)^{\bar i}\mathrm{ad}_{e_{ii}}\Bigr)H_{r,ii}=\sum_{l\ne i}
(-1)^{(\bar i+1)(\bar l+1)}\,
e_{il}H_{r,li},
\end{equation}
where $\mathrm{ad}_xy=[x,y]$. Since $V$
is finite-dimensional indecomposable, the eigenvalues of $\mathrm{ad}_{e_{ii}}$ are {integers}. Because $h\notin\mathbb Q$, the operator $\frac{r}{h}\mathrm{Id}+(-1)^{\bar i}\mathrm{ad}_{e_{ii}}$ is invertible for every $r\ge0$. Hence \eqref{eq:diagonal-sec2} determines $H_{r+1,ii}$ uniquely. As
the right side of \eqref{eq:diagonal-sec2} is even, so is $H_{r+1,ii}$, namely it has parity $\bar i+\bar i=0$.

This completes the induction and proves the proposition.
\end{proof}

From  the recurrence relation \eqref{eq:recursion-sec2} we see that the coefficients $H_r$ grow at a factorial rate of $r!$. Consequently, the formal series $\widehat{H}$ is of Gevrey order $1$, which allows one to apply the Borel-Laplace  resummation technique to obtain its sectorial analytic solutions.

\begin{comment}
\begin{lem}\label{lem:gevrey-sec2}
There exists a constant $C>0$ such that
\[
\|H_r\|\le C^{\,r+1}r!,
\qquad r\ge0.
\]
In particular, $\widehat H(z)$ is a Gevrey-$1$ formal series.
\end{lem}

\begin{proof}
Let $\Delta:=\min_{i\neq j}|u_i-u_j|>0$. By \eqref{eq:recursion-sec2} and
\eqref{eq:submultiplicative-sec2}, the off-diagonal part satisfies an estimate of the form
\[
\|H_{r+1}^{\mathrm{off}}\|
\le
\Delta^{-1}\Bigl(\frac{r}{|h|}+\|T\|+\|\delta T\|\Bigr)\|H_r\|.
\]
For the diagonal part, the inverses of the operators in \eqref{eq:diagonal-sec2} are bounded by $C_0/(r+1)$ for a
suitable constant $C_0>0$, since the spectrum of $\ad(e_{ii})$ is finite. It follows that there exists a constant
$C_1>0$ such that
\[
\|H_{r+1}\|\le C_1(r+1)\|H_r\|,
\qquad r\ge0.
\]
The factorial estimate follows by induction from $H_0=I$.
\end{proof}
\end{comment}

\subsection{Canonical solutions with prescribed asymptotics via Borel--Laplace summation}\label{sec:2.3}

Since $\mathcal A_V$ is now a Banach algebra, the standard Borel--Laplace summation theory (c.f. \cite{Balser, LR}) applies to \eqref{eq:MonoSingularSys}. 

\begin{defi}\label{def:anti-stokes-sec2}
The anti-Stokes directions of \eqref{eq:MonoSingularSys} are the directions along which
$e^{h(u_i-u_j)z}$ decays most rapidly as $z\to\infty$ for some $u_i\neq u_j$. Denote by ${\rm{aSR}}(hu)$ the set of
anti-Stokes directions. Then
\begin{equation}\label{eq:anti-stokes-sec2}
{\rm{aSR}}(hu)=\{-{\rm{Arg}}(u_i-u_j)+2k\pi:\ k\in\mathbb Z,\ i\neq j\}.
\end{equation}
A direction $d$ is called an admissible direction if $d\notin {\rm{aSR}}(hu)$.
\end{defi}

In this paper we denote a direction/ray by its argument. Let us choose an initial anti-Stokes direction
$\tau_0$ and then arrange the anti-Stokes directions into a strictly monotonically increasing sequence
\[
\cdots<\tau_{-1}<\tau_0<\tau_1<\cdots.
\]

\begin{defi}\label{def:stokes-sector-sec2}
For any admissible direction $d\in(\tau_j,\tau_{j+1})$, the Stokes sector $\Sect_d$ is defined as
\begin{equation}\label{eq:stokes-sector-sec2}
\Sect_d
:=
\Bigl\{
z\in\widetilde{\mathbb C\setminus\{0\}}:\ 
\arg(z)\in\Bigl(\tau_j-\frac{\pi}{2},\,\tau_{j+1}+\frac{\pi}{2}\Bigr)
\Bigr\}.
\end{equation}
\end{defi}

\begin{pro}\label{prop:canonical-solution-sec2}
Given any $h\notin\mathbb Q$, fixed $u\in\mathfrak h_{\mathrm{reg}}$ and an admissible direction $d$, there exists a
unique $\mathcal A_V$-valued holomorphic fundamental solution $F_d(z)$ of \eqref{eq:MonoSingularSys} on
$\widetilde{\mathbb C\setminus\{0\}}$ with the asymptotics
\begin{equation}\label{eq:canonical-asymptotics-sec2}
F_d(z)\cdot z^{-h\delta T}e^{-huz}\sim I,
\qquad
z\to\infty \ \text{within }\Sect_d.
\end{equation}
Since $F_d(z)$ is the unique solution with the prescribed asymptotics
\eqref{eq:canonical-asymptotics-sec2}, it is called the canonical solution (with respect to the sector $\Sect_d$) at $z=\infty$.
\end{pro}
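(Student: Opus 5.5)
The plan is to follow the classical Borel--Laplace route for Poincaré rank one systems, working throughout in the finite-dimensional Banach algebra $\mathcal A_V$ with the norm \eqref{eq:algebra-norm-sec2}. We start from the unique formal solution $\widehat F(z)=\widehat H(z)e^{huz}z^{h\delta T}$ of Proposition \ref{prop:formal-solution-sec2}. The first step is to show that $\widehat H$ is Gevrey-$1$, i.e. $\|H_r\|\le C^{r+1}r!$. This follows from the recursion \eqref{eq:recursion-sec2}. The off-diagonal entries are divided by $u_j-u_i$, which is bounded away from zero. The diagonal entries are obtained by inverting $\frac rh\mathrm{Id}+(-1)^{\bar i}\mathrm{ad}_{e_{ii}}$; since the spectrum of $\mathrm{ad}_{e_{ii}}$ is finite and integral and $h\notin\mathbb Q$, this inverse has norm $O(1/(r+1))$ uniformly in $r$. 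Together with the submultiplicativity \eqref{eq:submultiplicative-sec2}, these give $\|H_{r+1}\|\le C_1(r+1)\|H_r\|$.

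For existence, we take the Borel transform $\mathcal B\widehat H(\xi)=\sum_r H_r\xi^{r-1}/(r-1)!$, which converges near $\xi=0$. Writing $\widehat H=I+\sum_{i,j}(\cdots)\otimes E_{ij}$ entrywise, the equation for $H$,
\[
H'=h[u,H]+\tfrac hz(TH-H\delta T),
\]
becomes a convolution equation in the Borel plane. The linear part $h[u,\cdot]$ acts entrywise by the scalars $h(u_i-u_j)$, so the singularities of $\mathcal B\widehat H$ can only sit at $\xi=h(u_j-u_i)$. The Koszul signs in \eqref{eq:super_tensor} enter only as $\pm1$ factors in bilinear operations and do not affect any estimate.

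The main step is to show that $\mathcal B\widehat H$ continues analytically along every ray avoiding these points, with at most exponential growth. We would argue this through the Volterra-type integral equation satisfied by the Borel transform: iteration gives convergence with exponential bounds, exactly as in the matrix case, since $\mathcal A_V$ is a Banach algebra. We then Laplace transform along directions near $d$ and rotate the integration line across the admissible interval $(\tau_j,\tau_{j+1})$. This produces $H_d(z)$, holomorphic on $\mathrm{Sect}_d$ and asymptotic to $\widehat H$ there. Next we check that $F_d:=H_d e^{huz}z^{h\delta T}$ solves \eqref{eq:MonoSingularSys}; this holds because summation is a differential-algebra morphism and $z^{h\delta T}$ is even and diagonal, so no sign issues arise. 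The solution $F_d$ extends to the universal cover of $\mathbb C\setminus\{0\}$ by continuation of the linear ODE, whose only singularities are $0$ and $\infty$. It is fundamental because $\det$-type invertibility of $H_d$ follows from $H_d\to I$.

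For uniqueness, suppose $F$ and $F'$ both have the asymptotics \eqref{eq:canonical-asymptotics-sec2}. Then $C=F_d^{-1}F'$ is constant, and
\[
e^{huz}z^{h\delta T}\,C\,z^{-h\delta T}e^{-huz}\sim I
\]
on $\mathrm{Sect}_d$. The $(i,j)$ entry is $e^{h(u_i-u_j)z}z^{h(-1)^{\bar i}e_{ii}}C_{ij}z^{-h(-1)^{\bar j}e_{jj}}$. Since the sector has opening greater than $\pi$ beyond each relevant anti-Stokes ray, each exponential grows along some direction inside it. Polynomial factors cannot compensate for this growth, so $C_{ij}=0$ for $i\ne j$. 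The diagonal entries satisfy $z^{\pm h\ldots}C_{ii}z^{\mp\ldots}\to1$; comparing behaviour along a full range of arguments forces $C_{ii}=1$. We expect the analytic continuation of the Borel transform to be the main obstacle. The fallback plan is to cite the Balser/Loday-Richaud theorem for Banach-algebra-valued systems directly, noting that the super structure changes only signs.
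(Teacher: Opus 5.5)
Your existence argument is the paper's Borel--Laplace outline with more detail: the Gevrey bound, the Borel singularities at $h(u_j-u_i)$, and rotating the Laplace direction across $(\tau_j,\tau_{j+1})$ to get asymptotics on all of $\Sect_d$. Your off-diagonal uniqueness argument is also correct.

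\textbf{The diagonal uniqueness step fails as written.} Because $\delta_i=(-1)^{\bar i}e_{ii}$ is not central in $\End(V)$, conjugation by $z^{h\delta_i}$ is not trivial. If $[\delta_i,X]=\lambda X$ with $\lambda\in\mathbb Z\setminus\{0\}$, then $z^{h\delta_i}Xz^{-h\delta_i}=z^{h\lambda}X$, and $|z^{h\lambda}|=|z|^{\mathrm{Re}(h\lambda)}e^{-\mathrm{Im}(h\lambda)\arg z}$. Moving $\arg z$ within a bounded range changes only a bounded factor, so comparing different directions gives nothing. If $\mathrm{Re}(h\lambda)<0$, this term tends to $0$ along every ray.

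For example, take $V$ the natural representation with $m\ge 2$, $i=1$ and $\mathrm{Re}\,h\neq 0$. Let $X\in\End(V)$ be the matrix unit $e_2\mapsto e_1$ or $e_1\mapsto e_2$, chosen according to the sign of $\mathrm{Re}\,h$. Then $C=I+X\otimes E_{11}$ gives a second fundamental solution $F_dC$ with $F_dC\,z^{-h\delta T}e^{-huz}\to I$ on $\Sect_d$. So a limit statement cannot force $C_{ii}=1$. Uniqueness requires reading $\sim$ as asymptotics to all orders, $F z^{-h\delta T}e^{-huz}\sim\widehat H$, which is exactly what your construction provides.

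With that reading, $e^{huz}z^{h\delta T}Cz^{-h\delta T}e^{-huz}=I+O(|z|^{-N})$ for every $N$. Along a ray, $\|z^{\pm h\delta_i}\|\le c\,|z|^{k}$ for a fixed $k$, hence $\|C_{ii}-1\|\le c_N|z|^{2k-N}\to 0$. This is your off-diagonal mechanism, with exponential growth replaced by flatness to all orders.

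\textbf{A secondary caveat on the Borel-plane step.} The same non-centrality means this step is not literally ``exactly as in the matrix case'' with scalar formal monodromy. Write $\Phi=\mathcal B\widehat H$. The diagonal component of the convolution equation is $\xi\,\Phi_{ii}(\xi)=-h\int_0^\xi\bigl([\delta_i,\Phi_{ii}(s)]+\sum_{k\neq i}\pm e_{ik}\Phi_{ki}(s)\bigr)\,ds$. The map $\Phi_{ii}\mapsto\frac{h}{\xi}\int_0^\xi[\delta_i,\Phi_{ii}(s)]\,ds$ acts on constants as $h\,\mathrm{ad}_{\delta_i}$, so it is not small near $\xi=0$ and plain Picard iteration need not converge.

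Instead you must invert $1+\frac{h}{\xi}\int_0^\xi\mathrm{ad}_{\delta_i}$. On $\xi^{p-1}$ this acts as $1+\frac{h}{p}\mathrm{ad}_{\delta_i}$, which is invertible because $\mathrm{ad}_{\delta_i}$ has integer spectrum and $h\notin\mathbb Q$, mirroring the paper's formal diagonal recursion. Your fallback covers this: regard the $\mathcal A_V$-valued system as an ordinary block-matrix system of Poincar\'e rank one, with leading term $hu$ having repeated eigenvalues and a nonresonant block-diagonal residue, and cite the classical theory. That is also all the paper itself does here.
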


\begin{proof}
Since the proof is standard, we only outline the construction of the canonical solution $F_d(z)$. First recall that
the formal Borel transform of a formal power series
\[
\widehat f(z)=I+a_1z^{-1}+a_2z^{-2}+\cdots
\]
at $z=\infty$ is defined by
\begin{equation}\label{eq:borel-transform-sec2}
\mathcal B(\widehat f)(\xi)
:=
\sum_{p=1}^{\infty}\frac{a_p}{(p-1)!}\xi^{p-1}.
\end{equation}
The Laplace transform of a holomorphic $\mathcal A_V$-valued function $g(\xi)$ along the direction $-d$ is defined by
\begin{equation}\label{eq:laplace-transform-sec2}
(\mathcal L_d g)(z)
=
I+\int_{0}^{+\infty\cdot e^{-\mathrm{i}d}} e^{-z\xi}g(\xi)\,d\xi,
\end{equation}
where the line of integration is the ray with argument $-d$.

By the standard estimates in the Banach-valued Borel--Laplace theory, the Borel transform
$\mathcal B(\widehat H)$ extends analytically in a sector around the direction $-d$ and has at most exponential
growth there. Hence the Laplace transform
\[
H_d(z):=\mathcal L_d\bigl(\mathcal B(\widehat H)\bigr)(z)
\]
is well defined for $z$ in a sector of opening $\pi$ centered at the direction $d$. It is also known that
$H_d(z)$ is asymptotic to the formal power series $\widehat H(z)$. 

By the compatibility of the Borel--Laplace transform and the derivative with respect to $z$, the holomorphic
function $H_d(z)$ satisfies the same equation as the formal power series $\widehat H(z)$. As a consequence, the
function
\begin{equation}\label{eq:Hd-to-Fd-sec2}
F_d(z):=H_d(z)e^{huz}z^{h\delta T}
\end{equation}
solves \eqref{eq:MonoSingularSys}. By the asymptotic property of $H_d(z)$, the solution $F_d(z)$ satisfies
\eqref{eq:MonoSingularSys}. Finally, by analytic continuation, it extends to
$\widetilde{\mathbb C^\times}$, still with the prescribed asymptotics on $\Sect_d$. The uniqueness follows from
Watson's lemma (c.f.\cite{Watson} or \cite[Chap.4, Prop.11]{Balser}).
\end{proof}

\subsection{Stokes (super)matrices}\label{sec:2.4}

We next define the Stokes matrices of \eqref{eq:MonoSingularSys}. By Proposition~\ref{prop:canonical-solution-sec2}, the function $F_d(z)\,z^{-h\delta T}e^{-huz}$ is asymptotic to the identity as $z\to\infty$ within $\Sect_d$. Since $\mathcal A_V$ is a Banach algebra, this implies that $F_d(z)$ is invertible. Moreover, the factor $e^{huz}z^{h\delta T}$ is an invertible even diagonal element of $\mathcal A_V$. Hence one can take the transition matrices between the canonical solutions at different sectors. 

\begin{defi}\label{def:stokes-matrix-sec2}
For fixed $u\in\mathfrak{h}_{\rm reg}$, $h\notin \mathbb{Q}$ and any admissible direction $d\notin {\rm{aSR}}(hu)$, the associated Stokes matrices of the equation \eqref{eq:MonoSingularSys} are the
constant elements $S^{\pm}_{d}(u)\in\mathcal A_V$ determined by
\begin{equation}\label{eq:stokes-definition-sec2}
F_{d}(z)=F_{d\pm\pi}(z)\cdot S^{\pm}_{d}(u).
\end{equation}
\end{defi}

\begin{rmk}\label{rmk:stokes mat}
According to Proposition~\ref{prop:formal-solution-sec2},  ${H}_{r,ij}$ has the parity $\bar{i}+\bar{j}$. Since the resummation process preserves the $\mathbb{Z}_2$-grading of the basis, the $(i,j)$-entry of the sectorial solution $F_{d}(z)$ likewise inherits the parity $\bar{i}+\bar{j}$. Consequently, the $(i,j)$-entry of the Stokes matrix $S^{\pm}_d(u)$ is also of parity $\bar{i}+\bar{j}$. 
For computational convenience, in practice we can identify these tensor products of the form $\sum_{i,j} A_{ij}\otimes E_{ij}$ (where $|A_{ij}|=\bar i+\bar j$) with a matrix $(\widetilde{A}_{ij})$, where the entries are defined by $\widetilde{A}_{ij} = A_{ij} (-1)^{\bar{i}(\bar{j}+\bar{1})}$. This assignment allows us to perform usual matrix multiplication on $(\widetilde{A}_{ij})$. The final result can then be pulled back to the tensor product form simply by reintroducing the corresponding signs. Under this identification, the Stokes matrix $S^{\pm}_d(u)$ can be viewed as a supermatrix, which is even due to its grading structure (for supermatrix, see e.g. \cite{Carmeli2011, Manin1997}). It should be emphasized that we adopt this identification exclusively for the explicit computations in Section \ref{sec:2.5}; throughout the remainder of this paper, the standard tensor product notation is retained.
\end{rmk}

To describe the triangularity of $S^{\pm}_d(u)$, let $\sigma_d$ be the permutation of $\{1,\dots,m+n\}$ corresponding
to the dominance ordering of $\{e^{hu_1z},\dots,e^{hu_{m+n}z}\}$
along the direction $d+\frac{\pi}{2}$. That is,
\[
\sigma_d(i)<\sigma_d(j)
\quad\Longleftrightarrow\quad
e^{h(u_i-u_j)z}\to 0
\qquad\text{as } z\to\infty \text{ along } d+\frac{\pi}{2}.
\]

\begin{pro}\label{prop:triangularity-sec2}
Write
\[
S^{\pm}_d(u)=\sum_{i,j=1}^{m+n}(S^{\pm}_d)_{ij}\otimes E_{ij}.
\] Then the matrix $S_d^+(u)$ is upper triangular with respect to the ordering $\sigma_d$, while $S_d^-(u)$ is lower
triangular with respect to the same ordering. More precisely,
\[
(S_d^{\pm}(u))_{ij}=0
\qquad\text{whenever }\sigma_d(i)\gtrless\sigma_d(j).
\]
Furthermore, their diagonal entries are
\begin{equation}\label{eq:stokes-diagonal-sec2}
(S_d^+(u))_{ii}=(S_d^-(u))_{ii}=1\in\End(V),
\qquad i=1,\dots,m+n.
\end{equation}

\end{pro}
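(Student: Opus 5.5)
The plan follows the classical argument for Stokes matrices at a Poincaré rank one pole. Everything is written in terms of the sectorial analytic parts $H_d(z)=F_d(z)e^{-huz}z^{-h\delta T}$ from Proposition~\ref{prop:canonical-solution-sec2}; the only new point is that the supersigns must not obstruct an entrywise argument.

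\textbf{Step 1: conjugated identity.} Since $F_d$ and $F_{d\pm\pi}$ are fundamental solutions on the universal cover, the definition \eqref{eq:stokes-definition-sec2} can be rewritten as
\[
z^{h\delta T}e^{huz}\, S_d^{\pm}(u)\, e^{-huz}z^{-h\delta T}=H_{d\pm\pi}(z)^{-1}H_d(z).
\]
This holds on the overlap $\Sect_d\cap\Sect_{d\pm\pi}$, which is a sector of opening greater than $0$ centred at $d\pm\frac{\pi}{2}$. By Definition~\ref{def:stokes-sector-sec2}, this overlap contains the direction $d\pm\frac{\pi}{2}$ together with an open neighbourhood of it.

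The right-hand side is asymptotic to $\widehat H(z)^{-1}\widehat H(z)=I$ there, because both factors admit the common asymptotic expansion $\widehat H$ and $\mathcal A_V$ is a Banach algebra (so inversion is continuous).

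The exponential factors $e^{huz}$ and $z^{h\delta T}$ are even and diagonal. Hence conjugation acts entrywise with no Koszul signs: the $(i,j)$ entry of the left-hand side is
\[
e^{h(u_i-u_j)z}\, z^{h(-1)^{\bar i}e_{ii}}\,(S^{\pm}_d)_{ij}\, z^{-h(-1)^{\bar j}e_{jj}}.
\]
I will check explicitly that the sign rule \eqref{eq:super_tensor} is trivial when one factor is of the form $D_i\otimes E_{ii}$ with $D_i$ even.

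\textbf{Step 2: vanishing of entries.} Fix $i\neq j$ and let $z\to\infty$ along the ray $d+\frac{\pi}{2}$ (for $S^+$), where the left-hand side must tend to $0$.
\begin{itemize}
\item The factors $z^{\pm h(\cdot)e_{kk}}$ grow at most polynomially, since $\End(V)$ is finite dimensional. Concretely, they are finite sums of $z^{\lambda}(\log z)^k$ terms coming from the Jordan decomposition of $e_{kk}$.
\item If $\sigma_d(i)>\sigma_d(j)$, then $e^{h(u_i-u_j)z}$ grows exponentially along $d+\frac{\pi}{2}$, because it is the inverse of a decaying exponential.
\item Multiplying by the inverse powers $z^{-h(-1)^{\bar i}e_{ii}}$ on the left and $z^{h(-1)^{\bar j}e_{jj}}$ on the right, which also grow at most polynomially, gives $e^{h(u_i-u_j)z}(S^+_d)_{ij}=o(\text{poly})$. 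This forces $(S^+_d)_{ij}=0$.
\end{itemize}
For $S^-_d$ the same argument runs along $d-\frac{\pi}{2}$. There the dominance order is reversed, which gives lower triangularity with respect to $\sigma_d$. Admissibility of $d$ guarantees that $\mathrm{Re}(h(u_i-u_j)z)\neq 0$ along these rays, so every pair $i\neq j$ is strictly ordered.

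\textbf{Step 3: diagonal entries.} For $i=j$ the exponential cancels, leaving
\[
z^{h(-1)^{\bar i}e_{ii}}(S^{\pm}_d)_{ii}z^{-h(-1)^{\bar i}e_{ii}}\to 1 \quad\text{as } z\to\infty
\]
in the open overlap sector. Writing $z=e^{t}$, this is $e^{th(-1)^{\bar i}\mathrm{ad}_{e_{ii}}}$ applied to $X:=(S^\pm_d)_{ii}$. By the integrality of the eigenvalues of $\mathrm{ad}_{e_{ii}}$ used in Proposition~\ref{prop:formal-solution-sec2}, this equals $\sum_{k}z^{h(-1)^{\bar i}k}P_k(\log z)X_k$, where $X_k$ are the generalized eigencomponents of $X$ and $P_k$ are polynomials.

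Because $h\notin\mathbb Q$, the exponents $hk$ for distinct integers $k$ are distinct. Letting $z$ vary in an open sector, including along a ray and then rotating the argument, the functions $z^{hk}(\log z)^p$ have distinct asymptotic behaviours and are linearly independent modulo $o(1)$. So convergence to $1$ forces $X_0=1$ and $X_k=0$ for $k\neq 0$, with no log terms. I expect this step to be the main obstacle.

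The delicate case is $\mathrm{Re}(hk)=0$, which occurs when $h$ is imaginary. Then I would use the open angular range: $|z^{hk}|=e^{-\mathrm{Im}(hk)\arg z}$ varies, so a nonconstant oscillating term cannot tend to $0$ uniformly. Alternatively, Jordan blocks of $\mathrm{ad}_{e_{ii}}$ with eigenvalue $0$ would produce $\log$ growth, which is incompatible with a limit. Combining these yields $(S^\pm_d)_{ii}=1$.
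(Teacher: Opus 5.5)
Your Steps 1 and 2 match the paper's argument and are correct: the conjugated identity on $\Sect_d\cap\Sect_{d\pm\pi}$, and the exponential growth of $e^{h(u_i-u_j)z}$ along $d\pm\frac{\pi}{2}$ beating the at most polynomial growth of the $z^{\pm h(-1)^{\bar k}e_{kk}}$ factors.

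The gap is in Step 3. You claim that the functions $z^{hk}(\log z)^p$ are linearly independent modulo $o(1)$ on the overlap; this is false. The overlap has finite opening and $|z^{\lambda}|=|z|^{\mathrm{Re}\,\lambda}e^{-\mathrm{Im}\,\lambda\,\arg z}$. So every term with $\mathrm{Re}(h(-1)^{\bar i}k)<0$ is itself $o(1)$ there, and rotating the argument inside the sector does not change this.

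Concretely, put $\delta_i:=(-1)^{\bar i}e_{ii}$. Suppose $Y\in\End(V)$ satisfies $[e_{ii},Y]=kY$ with $\mathrm{Re}(h(-1)^{\bar i}k)<0$. Such $Y\neq 0$ exist whenever $e_{ii}$ has two distinct eigenvalues on $V$ and $\mathrm{Re}\,h\neq 0$, e.g.\ the natural representation with $h$ real. Then $z^{h\delta_i}(1+Y)z^{-h\delta_i}=1+z^{h(-1)^{\bar i}k}Y\to 1$, so the limit alone cannot force $(S_d^\pm)_{ii}=1$.

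The case $\mathrm{Re}(hk)=0$ that you single out as delicate is actually harmless, since there $|z^{hk}|$ is bounded away from $0$ on the sector. The real obstruction is the decaying components. The paper's own one-line justification (``since $(S_d^\pm(u))_{ii}$ is independent of $z$'') passes over the same point.

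The repair uses something you stated in Step 1 and then did not use. Since $H_d$ and $H_{d\pm\pi}$ have the same asymptotic expansion $\widehat H$ on the overlap, $H_{d\pm\pi}(z)^{-1}H_d(z)-I$ has zero asymptotic expansion. That is, it is $O(|z|^{-N})$ for every $N$, not merely $o(1)$.

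Let $W=(S^\pm_d)_{ii}-1$ and project onto a generalized eigenspace $E_k$ of $\mathrm{ad}_{e_{ii}}$. The projector is a polynomial in $\mathrm{ad}_{e_{ii}}$, so it commutes with $X\mapsto z^{h\delta_i}Xz^{-h\delta_i}=\exp(h\log z\,\mathrm{ad}_{\delta_i})X$. You get $z^{h(-1)^{\bar i}k}\exp\bigl(h(-1)^{\bar i}(\log z)N_k\bigr)W_k=O(|z|^{-N})$ for all $N$, where $N_k$ is the nilpotent part of $\mathrm{ad}_{e_{ii}}$ on $E_k$.

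Along a fixed ray, the norm of the left side is a positive constant times $|z|^{\mathrm{Re}(h(-1)^{\bar i}k)}$, times the norm of a polynomial in $\log z$ with constant term $W_k$. If $W_k\neq 0$ that polynomial is nonzero, so the norm is bounded below by $c\,|z|^{\mathrm{Re}(h(-1)^{\bar i}k)}$ for large $|z|$. Flatness therefore gives $W_k=0$ for every $k$, i.e.\ $(S^\pm_d)_{ii}=1$. This treats all eigenvalues and Jordan blocks uniformly, and needs neither $h\notin\mathbb{Q}$ nor integrality of the spectrum.
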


\begin{proof}
We only prove $S_d^+(u)$, $S_d^-(u)$ is similar. By Definition~\ref{def:stokes-matrix-sec2} and Proposition~\ref{prop:canonical-solution-sec2}, one has
\begin{equation*}
e^{huz}z^{h\delta T}\,S_d^{\pm}(u)\,z^{-h\delta T}e^{-huz}
=
\bigl(F_{d\pm\pi}(z)z^{-h\delta T}e^{-huz}\bigr)^{-1}\bigl(F_d(z)z^{-h\delta T}e^{-huz}\bigr)\to I,
\end{equation*}
as $z\to\infty$ within $\Sect_d\cap \Sect_{d\pm\pi}$. Therefore, for $i\neq j$, the $(i,j)$-entry of $S_d^{\pm}(u)$ can be nonzero only if $e^{h(u_i-u_j)z}\to 0$ on $\Sect_d\cap \Sect_{d\pm\pi}$. By the definition of $\sigma_d$, this implies that
\[
(S_d^{\pm}(u))_{ij}=0
\qquad\text{whenever }\sigma_d(i)\gtrless\sigma_d(j).
\]
For $i=j$, the same limits give
\[
z^{h\delta_i}(S_d^\pm(u))_{ii}z^{-h\delta_i}\to 1,
\qquad
\delta_i:=(-1)^{\bar i}e_{ii}.
\]
Since $(S_d^\pm(u))_{ii}$ is independent of $z$, it follows that $(S_d^\pm(u))_{ii}=1$. This proves \eqref{eq:stokes-diagonal-sec2}.
\end{proof}

\begin{comment}
\begin{rmk}\label{rmk:ordering-sec2}
\rednote{remark modifiy: delete or state ordering is equivalent to super permutation; always choose approriate $d$.}
For a diagonal element $u$ with pairwise distinct entries, we can choose a permutation matrix $J$ and perform the
super-conjugation
\[
F(z)\longmapsto (1\otimes J)\cdot_s F(z),
\]
which transforms \eqref{eq:main-system-sec2} into a system of the same form with the diagonal term reordered.
Accordingly, the transformed system may be arranged so that its Stokes matrices satisfy the triangularity statement
of Proposition~\ref{prop:triangularity-sec2} with respect to the standard order $1<\cdots<m+n$. Because this is a
super-conjugation with respect to the product $\cdot_s$, additional parity-dependent signs may appear in the
transformed coefficient matrix. In the sequel, we shall always assume that $u$ has been arranged in this way, so
that, for the chosen admissible direction, $S_d^+(u)$ is upper triangular and $S_d^-(u)$ is lower triangular in the
standard order.
\end{rmk}
\end{comment}

\begin{lem}\label{lem:stokes-periodicity-sec2}
For any admissible direction $d$, one has
\[
S_{d+2\pi}^\pm(u)=e^{-2\pi \mathi h\,\delta T}S_d^\pm(u)e^{2\pi \mathi h\,\delta T}.
\]
\end{lem}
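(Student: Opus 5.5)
The plan is to relate the canonical solution on the shifted sector $\Sect_{d+2\pi}$ to the one on $\Sect_d$ by analytic continuation on $\widetilde{\mathbb C\setminus\{0\}}$, and then read off the Stokes matrices from Definition~\ref{def:stokes-matrix-sec2}. The whole argument rests on the uniqueness part of Proposition~\ref{prop:canonical-solution-sec2}. The only delicate point is bookkeeping the sign of the monodromy factor, so I will track it explicitly.

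\textbf{Step 1: the shifted solution.} Fix an admissible direction $d$ and define
\[
G(z):=F_d(ze^{-2\pi\mathi})
\]
on the universal cover. The coefficient $h(u+T/z)$ is single-valued in $z$, so $G$ is again a fundamental solution of \eqref{eq:MonoSingularSys}.

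\textbf{Step 2: asymptotics of $G$.} Since $\mathrm{Sect}_{d+2\pi}=\mathrm{Sect}_d+2\pi$, for $z\in\Sect_{d+2\pi}$ the point $w=ze^{-2\pi\mathi}$ lies in $\Sect_d$. Hence Proposition~\ref{prop:canonical-solution-sec2} gives
\[
F_d(w)\,w^{-h\delta T}e^{-huw}\to I .
\]
We have $e^{huw}=e^{huz}$, and since $\log w=\log z-2\pi\mathi$, also $w^{h\delta T}=z^{h\delta T}e^{-2\pi\mathi h\delta T}$. Every factor here is an even diagonal element built from $u$ and $\delta T$, so all of them commute with each other. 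It follows that
\[
G(z)\,e^{2\pi\mathi h\delta T}\,z^{-h\delta T}e^{-huz}\sim I,\qquad z\to\infty\text{ in }\Sect_{d+2\pi}.
\]

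\textbf{Step 3: identify the canonical solution.} The function $G(z)e^{2\pi\mathi h\delta T}$ is a solution, because right multiplication by a constant preserves solutions. By Step~2 it has exactly the asymptotics characterizing $F_{d+2\pi}$. Uniqueness in Proposition~\ref{prop:canonical-solution-sec2} then yields
\[
F_{d+2\pi}(z)=F_d(ze^{-2\pi\mathi})\,e^{2\pi\mathi h\delta T}.
\]
Since $d$ was an arbitrary admissible direction, the same identity holds with $d$ replaced by $d\pm\pi$.

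\textbf{Step 4: compare Stokes matrices.} Put $w=ze^{-2\pi\mathi}$. Applying Step~3, then Definition~\ref{def:stokes-matrix-sec2} at $w$, and then Step~3 for $d\pm\pi$ read backwards, gives
\[
F_{d+2\pi}(z)=F_d(w)e^{2\pi\mathi h\delta T}
=F_{d\pm\pi}(w)\,S_d^\pm(u)\,e^{2\pi\mathi h\delta T}
=F_{d+2\pi\pm\pi}(z)\,e^{-2\pi\mathi h\delta T}S_d^\pm(u)\,e^{2\pi\mathi h\delta T}.
\]
Comparing with the defining relation $F_{d+2\pi}=F_{d+2\pi\pm\pi}S^\pm_{d+2\pi}$, and using that $F_{d+2\pi\pm\pi}$ is invertible, we obtain
\[
S^\pm_{d+2\pi}(u)=e^{-2\pi\mathi h\delta T}S_d^\pm(u)e^{2\pi\mathi h\delta T},
\]
which is exactly the statement.

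The main obstacle is the sign bookkeeping in Step~2. The continuation must go from $z\in\Sect_{d+2\pi}$ back to $w=ze^{-2\pi\mathi}\in\Sect_d$, not in the opposite direction; this is what makes $F_{d+2\pi}$ carry the factor $e^{+2\pi\mathi h\delta T}$ on the right. In Step~4 this factor enters as $e^{+2\pi\mathi h\delta T}$ for $F_d$ and as $e^{-2\pi\mathi h\delta T}$ when $F_{d\pm\pi}(w)$ is re-expressed through $F_{d+2\pi\pm\pi}(z)$, which produces the stated order of the conjugation. No Koszul signs intervene at any point, because the factors being moved are even and diagonal in $\mathcal A_V$.
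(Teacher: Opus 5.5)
Your proof is correct and takes the same approach as the paper. The paper simply asserts the monodromy identity $F_{d+2\pi}(ze^{2\pi\mathi})=F_d(z)e^{2\pi\mathi h\delta T}$ and reads off the conjugation formula from Definition~\ref{def:stokes-matrix-sec2}; you additionally justify that identity via the uniqueness in Proposition~\ref{prop:canonical-solution-sec2} (using $\Sect_{d+2\pi}=\Sect_d+2\pi$) and carry out the same substitution explicitly, with the correct signs.
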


\begin{proof}
Since
\[
F_{d+2\pi}(ze^{2\pi \mathi})=F_d(z)e^{2\pi \mathi h\,\delta T},
\]
the identity follows immediately from Definition~\ref{def:stokes-matrix-sec2}.
\end{proof}

This lemma shows that it suffices to consider a single pair of $S^{\pm}_d(u)$, from which the Stokes information in all other directions can be retrieved. For the algebraic construction in the next section, we shall use the following renormalized matrices.

\begin{defi}\label{def:renorm-Stokes}
    Choose an appropriate admissible direction $d$ such that $\sigma_d(i)>\sigma_d(j)$ whenever $i>j$, i.e. $S_d^+(u)\in\mathcal{A}_V$ is upper triangular, define the renormalized Stokes (super)matrices $S_{h\pm}(u)$ to be:
    \begin{equation}
    S_{h+}(u)=S_d^+(u)\cdot e^{\pi \mathi h\delta T};\qquad 
    S_{h-}(u)=e^{-\pi \mathi h\delta T}\cdot \big(S_d^-(u)\big)^{-1}.
    \end{equation}
\end{defi}

\subsection{Example: Stokes matrices associated to the natural representation and the standard super \texorpdfstring{$R$}{R}-matrices of quantum supergroup}\label{sec:2.5}
In this section, we compute  the Stokes matrices of the system \eqref{eq:MonoSingularSys} associated to the dual representation of the natural representation 
$V\cong \mathbb C^{m|n}$. Via the representation 
\[U(\mathfrak{gl}_{m|n})\rightarrow \End(\mathbb{C}^{m|n})~;~\rho(e_{ij})=-(-1)^{\bar i(\bar j+1)}E_{ji},\] the system can be written as
\begin{equation}\label{eq:rank-twosystemsec2}
\frac{\dd F}{\dd z}= h\Bigl(U+\frac{P}{z}\Bigr) F,
\end{equation}
where
\[
U:=\sum_{i=1}^{m+n}1\otimes u_iE_{ii},
\qquad 
P:=-\sum_{k,l=1}^{m+n}(-1)^{\bar l}E_{kl}\otimes E_{lk}.
\]

 Using the viewpoint in Remark~\ref{rmk:stokes mat} and choosing the natural basis $\{e_1,\dots,e_{m+n}\}$ of $\mathbb{C}^{m|n}$, we can identify the above system with a matrix system under usual matrix multiplication. Here, the standard matrix units $E_{ij}$ have $1$ at the $(i,j)$-entry and $0$ elsewhere, and the matrices $U$ and $P$ are given by
\begin{align}\label{eq:Psys in mat form}
U = \mathrm{diag}(u_1\cdot \mathrm{Id},\dots,u_{m+n}\cdot \mathrm{Id}), \quad P = (P_{ij}) = \big(-(-1)^{\bar{i}\cdot\bar{j}}E_{ij}\big).
\end{align}

%The constructions of the previous subsections apply to \eqref{eq:rank-two-system-sec2}. Then the main result for this subsection is 
\begin{thm}\label{thm:super-r-matrix}
The inverse of the (renormalized) Stokes matrix $S_{h+}(U)^{-1}$ of system \eqref{eq:rank-twosystemsec2} equals to \(R \in \End(\mathbb{C}^{m|n})\otimes \End(\mathbb{C}^{m|n})\), where
\begin{equation}\label{eq:standard-r-matrix}
R
=
\sum_{i=1}^{m+n} q^{(-1)^{\bar i}} E_{ii}\otimes E_{ii}
+
\sum_{i\neq j} E_{ii}\otimes E_{jj}
+
(q-q^{-1})\sum_{i<j}(-1)^{\bar i} E_{ji}\otimes E_{ij},
\qquad q=e^{\pi i h},
\end{equation}
is the standard super $R$-matrix (c.f.\cite{PerkSchultz1981,Zhang2016}) of $U_q(\mathfrak{gl}(m|n))$.
\end{thm}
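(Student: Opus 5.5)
The strategy is to reduce the system \eqref{eq:rank-twosystemsec2} to a family of decoupled low-rank systems and solve each one in closed form by confluent hypergeometric functions. We then read off the Stokes multipliers from the classical connection formulas, and finally assemble them into $S_{h+}(U)$ and invert.

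\textbf{Block decomposition.} We work in the matrix picture of Remark~\ref{rmk:stokes mat}, with $F$ an $(m+n)^2\times(m+n)^2$ matrix acting on $\mathbb C^{m|n}\otimes\mathbb C^{m|n}$ and basis $e_a\otimes e_b$. The operator $U$ multiplies $e_a\otimes e_b$ by $u_b$. The operator $P$, as a signed flip, sends $e_a\otimes e_b$ to $\pm e_b\otimes e_a$. Consequently the space splits into $P$- and $U$-invariant blocks:
\begin{itemize}
\item For $a=b$ we get the one-dimensional block spanned by $e_a\otimes e_a$. On it $U=u_a$ and $P=-(-1)^{\bar a}$.
\item For $a\neq b$ we get the two-dimensional block spanned by $\{e_a\otimes e_b,\,e_b\otimes e_a\}$. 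On it $U=\mathrm{diag}(u_b,u_a)$, and $P$ is an off-diagonal $2\times 2$ matrix whose signs come from $(-1)^{\bar a\bar b}$ and $(-1)^{\bar l}$. The first step is to write these signs down carefully.
\end{itemize}
Both $\delta T$ and the Stokes matrices respect this decomposition. On each $1$-dimensional block the system is scalar and Stokes-free. There $S_d^{\pm}=1$, so $S_{h+}=e^{\pi i h\delta T}$ equals $e^{-\pi i h(-1)^{\bar a}}=q_a^{-1}$ on $e_a\otimes e_a$. Its inverse is $q^{(-1)^{\bar a}}$, which matches the first sum in $R$.

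\textbf{Rank-two blocks.} On each $2\times2$ block the formal monodromy $\delta T$ has zero diagonal part, because $P$ is off-diagonal there. Hence $e^{\pi i h\delta T}=1$ on these blocks, and $S_{h+}=S_d^+$ is unipotent upper triangular. Its diagonal is $1$, consistent with the middle sum of $R$. It remains to compute the single off-diagonal entry. The rank-two system $F'=h(\mathrm{diag}(u_b,u_a)+P_{ab}/z)F$ has $P_{ab}^2=c\,\mathrm{Id}$ with $c=(-1)^{\bar a}(-1)^{\bar b}(\pm1)$. It reduces to a scalar second-order confluent (Kummer/Whittaker) equation whose exponents at $z=0$ are $\pm h\sqrt{c}$ and whose formal exponents at infinity are $0$. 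Its Stokes multiplier is explicit through the Kummer connection formula $\Gamma$-function products. Since the exponents vanish, these reduce to $\Gamma(1+h\lambda)\Gamma(1-h\lambda)$-type products, i.e.\ to $\sin$, i.e.\ to $q-q^{-1}$ up to sign and normalization. We use the admissible direction $d$ of Definition~\ref{def:renorm-Stokes} to decide which of $e^{h(u_a-u_b)z}$ dominates. This fixes which entry is nonzero: the one corresponding to $a<b$. The computation should give
\[
(S_{h+})_{e_b\otimes e_a,\,e_a\otimes e_b}=-(-1)^{\bar a}(q-q^{-1}).
\]

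\textbf{Assembly and obstacle.} $S_{h+}(U)$ is block diagonal, with $1\times1$ blocks $q_a^{-1}$ and $2\times2$ blocks $\begin{pmatrix}1&s\\0&1\end{pmatrix}$. Therefore $S_{h+}(U)^{-1}$ has $q_a$ on $e_a\otimes e_a$, $1$ on the other diagonal entries, and $-s$ off-diagonal. Translating back via the sign rule $\widetilde A_{ij}=A_{ij}(-1)^{\bar i(\bar j+1)}$ to the tensor notation should yield exactly $(q-q^{-1})\sum_{i<j}(-1)^{\bar i}E_{ji}\otimes E_{ij}$. The main obstacle is the explicit Stokes multiplier of the $2\times2$ block together with its sign bookkeeping. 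The $\Gamma$-function connection constant must be computed with the correct branch of $z^{h\delta T}$. One must also track how the super signs (whether $c=1$ or $c=-1$, i.e.\ whether $\bar a=\bar b$) change the exponents $\pm h\sqrt c$. When $\bar a\neq\bar b$ the block may become nilpotent rather than semisimple, with $c=0$ or a sign flip. I would treat the cases $\bar a=\bar b$ and $\bar a\neq\bar b$ separately. In the latter case I would first try solving directly: one component then satisfies an inhomogeneous first-order equation integrable by the incomplete Gamma function. The Stokes jump is then read off from the Hankel-contour integral, producing $e^{2\pi i h}-1$ times a Gamma ratio that simplifies to $q-q^{-1}$ after the $e^{\pi i h\delta T}$ renormalization.
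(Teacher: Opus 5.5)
Your route is the paper's. You split $\mathbb{C}^{m|n}\otimes\mathbb{C}^{m|n}$ into the invariant lines $\mathrm{span}\{e_a\otimes e_a\}$ and planes $\mathrm{span}\{e_a\otimes e_b,e_b\otimes e_a\}$, and note that $\delta P=0$ on the planes. You then take the rank-two Stokes multiplier from the confluent hypergeometric (Balser--Jurkat--Lutz) formula with the reflection formula, and convert signs. The lines are handled correctly, and the entry $-(-1)^{\bar a}(q-q^{-1})$ you predict is in fact right. However, the step that produces it is only asserted, and your discussion of it goes wrong.

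You leave $c$ undetermined and suggest that for $\bar a\neq\bar b$ one may have $c=-1$ or even a nilpotent block. This is the whole content of the computation, not bookkeeping. With vanishing diagonal, the multiplier is proportional to $1/\bigl(\Gamma(1+h\sqrt c)\,\Gamma(1-h\sqrt c)\bigr)=\sin(\pi h\sqrt c)/(\pi h\sqrt c)$. So $c=-1$ would produce a multiple of $\sinh(\pi h)$, which is not a Laurent polynomial in $q=e^{\pi\mathrm{i}h}$. As you observed, $e^{\pi\mathrm{i}h\delta P}=1$ on the planes, so no renormalization could turn that into $q-q^{-1}$. Your incomplete-Gamma fallback for $\bar a\neq\bar b$ therefore cannot end the way you describe. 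Moreover, $c=0$ is impossible: the square of a signed flip on a plane is the product of its two signs.

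The missing observation is that, once the Koszul sign is included, the two entries of $P$ on each plane coincide. With $(A\otimes B)(v\otimes w)=(-1)^{|B||v|}Av\otimes Bw$ one finds $P(e_a\otimes e_b)=-(-1)^{\bar a}(-1)^{(\bar a+\bar b)\bar a}\,e_b\otimes e_a=-(-1)^{\bar a\bar b}\,e_b\otimes e_a$. Symmetrically, $P(e_b\otimes e_a)=-(-1)^{\bar a\bar b}\,e_a\otimes e_b$. Equivalently, $P^2=I$, because the super flip is an involution. So on every plane $P=\sigma\begin{pmatrix}0&1\\1&0\end{pmatrix}$ with $\sigma=-(-1)^{\bar a\bar b}$. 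Hence $c=1$ for all parities and the residue eigenvalues are $\pm h$. The rank-two formula then gives uniformly $S_d^+=\begin{pmatrix}1&\sigma(q-q^{-1})\\0&1\end{pmatrix}$, with no separate treatment of mixed parity. This is exactly the paper's check of the cases $(2|0),(1|1),(0|2)$, where $\sigma=-1,-1,+1$.

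Finally, for $a<b$ the pair $(\bar a,\bar b)=(1,0)$ cannot occur, since even indices precede odd ones. Hence $\sigma=-(-1)^{\bar a}$, which justifies your entry. The same fact makes the conversion sign $(-1)^{\bar a(\bar b+1)}$ equal to $1$. Inverting the unipotent block then yields precisely the term $(q-q^{-1})(-1)^{\bar a}E_{ba}\otimes E_{ab}$ of $R$.
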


\begin{proof}
We begin with $m+n=2$, where $(m,n)$ can be $(2,0)$, $(1,1)$, or $(0,2)$.  Under the matrix system \eqref{eq:Psys in mat form}, the corresponding $P$ are 
\begin{equation*}
    P^{2|0}= -\begin{pmatrix}
        1&0&0&0\\
        0&0&1&0\\
        0&1&0&0\\
        0&0&0&1
    \end{pmatrix},\quad
    P^{1|1}= -\begin{pmatrix}
        1&0&0&0\\
        0&0&1&0\\
        0&1&0&0\\
        0&0&0&-1
    \end{pmatrix},\quad 
    P^{0|2}= -\begin{pmatrix}
        -1&0&0&0\\
        0&0&-1&0\\
        0&-1&0&0\\
        0&0&0&-1
    \end{pmatrix}.
\end{equation*}
Thus we only need to compute the Stokes matrices of  the following $2 \times 2$ system:
\begin{align}\label{eq:ordinary-ranktwo-sec2}
&\frac{\dd F}{\dd z}=h\left(
\begin{pmatrix}
u_2&0\\[2pt]
0&u_3
\end{pmatrix}
+\frac{1}{z}
\begin{pmatrix}
0&\sigma\\[2pt]
\sigma&0
\end{pmatrix}
\right) F,\quad \sigma=\pm1.
\end{align}
Choose a direction $d$ for which the Stokes matrix $S_d^+(\sigma)$ of the system \eqref{eq:ordinary-ranktwo-sec2} is upper triangular. Applying the formula of Stokes matrices of a general system of rank $2$ in \cite[Proposition 8]{BJL}, see also \cite{Xu2019}, and using the Euler's reflection formula $\Gamma(x)\Gamma(1-x)=\frac{\pi}{\sin \pi x}$, we get
\begin{equation}\label{eq:BJL stokes}
S_d^+(\sigma)=
\begin{pmatrix}
1&e^{\pi \mathi\sigma h}-e^{-\pi \mathi\sigma h}\\[2pt]
0&1
\end{pmatrix}.
\end{equation}
Letting $q=e^{\pi \mathi h}$ and substituting \eqref{eq:BJL stokes} into the renormalized Stokes matrices $S_{h+}(U)^{-1}=q^{-h\delta P}\cdot \left(S_{d+}(\sigma)\right)^{-1}$, we have
\begin{align*}
    (S_{h+}^{2|0})^{\!-1}\!=\!
    \begin{pmatrix}
        q&0&0&0\\
        0&1&q\!-\!q^{\!-1}&0\\
        0&0&1&0\\
        0&0&0&q 
    \end{pmatrix},\quad
    (S_{h+}^{1|1})^{\!-1}\!=\!
    \begin{pmatrix}
        q&0&0&0\\
        0&1&q\!-\!q^{\!-1}&0\\
        0&0&1&0\\
        0&0&0&\!\!\!q^{\!-1} 
    \end{pmatrix},\quad
  (S_{h+}^{0|2})^{\!-1}\!=\!
   \begin{pmatrix}
        q^{-1}\!\!\!&0&0&0\\
        0&1&q^{\!-1}\!-\!q&0\\
        0&0&1&0\\
        0&0&0&\!\!\!q^{-1}
    \end{pmatrix}.
\end{align*}

\begin{comment}
\begin{align*}
    (S_{h+}^{2|0})^{-1}=
    {\scriptsize\begin{pmatrix}
        q&0&0&0\\
        0&1&q-q^{-1}&0\\
        0&0&1&0\\
        0&0&0&q 
    \end{pmatrix}},\quad
    (S_{h+}^{1|1})^{-1}=
    {\scriptsize\begin{pmatrix}
        q&0&0&0\\
        0&1&q-q^{-1}&0\\
        0&0&1&0\\
        0&0&0&q^{-1} 
    \end{pmatrix}},\quad
  (S_{h+}^{0|2})^{-1}=
   {\scriptsize \begin{pmatrix}
        q^{-1}&0&0&0\\
        0&1&q^{-1}-q&0\\
        0&0&1&0\\
        0&0&0&q^{-1}
    \end{pmatrix}}.
\end{align*}
\end{comment}

By multiplying the entries of each block matrix by $(-1)^{\bar{i}(\bar{j}+\bar{1})}$ to return to the tensor product form, we can verify that
 \[
 S_{h+}(U)^{-1}=  R,\quad \text{for}\ m+n=2.
\]

Now let us consider general  case for $m+n\geq 3$. Notice that for any $e_k,\ e_l,\ k<l$,  $P(e_k\otimes e_l)=-(-1)^{\bar k \bar l} e_l\otimes e_k$ and $P(e_k\otimes e_k)=-(-1)^{\bar k} e_{kk}\otimes e_{kk}$. Therefore, the system \eqref{eq:rank-twosystemsec2} can be restricted to any two-dimensional subspace $\mathrm{span}\{e_k\otimes e_l, e_l\otimes e_k\}$ or one-dimensional subspace $\mathrm{span}\{e_k\otimes e_k\}$. Consequently, it suffices to show that on these invariant subspaces, the action of $R$ coincides with the inverse of the action of the renormalized Stokes matrix of the system. 

By definition, the assertion holds trivially on each one-dimensional subspace $\mathrm{span}\{e_k\otimes e_k\}$, where $R = q^{(-1)^{\bar{k}}}\mathrm{Id}$ and $S_{d+}=\mathrm{Id}$, yielding $R = S_{h+}(U)^{-1}$. 
On the two-dimensional subspaces $\mathrm{span}\{e_k\otimes e_l, e_l\otimes e_k\}$ for $k<l$, the $R$-matrix restricted to this subspace reads
\[
R = E_{kk}\otimes E_{ll} + E_{ll}\otimes E_{kk} + (q-q^{-1}) (-1)^{\bar{k}} E_{lk}\otimes E_{kl},
\]
while the system \eqref{eq:rank-twosystemsec2} takes the form $U = \mathrm{diag}(u_k, u_l)$ and $P = -(-1)^{\bar{l}}E_{kl}\otimes E_{lk} - (-1)^{\bar{k}}E_{lk}\otimes E_{kl}$. This coincides precisely with the previously verified $m+n=2$ case, thereby completing the proof that $R = S_{h+}(U)^{-1}$ globally.
\end{proof}

%% file: final_sections/3RLLrelation.tex
\section{Stokes matrices satisfy the RLL relation}\label{sec:3}

Let \(R \in \End(\mathbb{C}^{m|n})\otimes \End(\mathbb{C}^{m|n})\) be the standard \(R\)-matrix of \(U_q(\mathfrak{gl}(m|n))\) as in Theorem \ref{thm:super-r-matrix}, the main result of this section is the following.

\begin{thm}\label{thm:RLL-relation}
For any \(h\notin \mathbb{Q}\) and \(u\in \mathfrak h_{\mathrm{reg}}\), the Stokes matrices \(S_{h\pm}(u)\) of \eqref{eq:MonoSingularSys} satisfy
\begin{subequations}\label{eq:rll-relation}
\begin{align}\label{eq:rll-plus}
R^{12} S_{h+}^{(1)}(u) S_{h+}^{(2)}(u)=S_{h+}^{(2)}(u) S_{h+}^{(1)}(u) R^{12},\\
\label{eq:rll-minus}
R^{12} S_{h-}^{(1)}(u) S_{h-}^{(2)}(u)=S_{h-}^{(2)}(u) S_{h-}^{(1)}(u) R^{12},\\
\label{eq:rll-mixed}
R^{12} S_{h+}^{(1)}(u) S_{h-}^{(2)}(u)=S_{h-}^{(2)}(u) S_{h+}^{(1)}(u) R^{12}.
\end{align}
\end{subequations}
Here we take the convention
\[S^{(1)}_{h\pm}(u):= \sum_{i,j}S_{h\pm}(u)_{ij}\otimes E_{ij}\otimes 1, \ S^{(2)}_{h\pm}(u):= \sum_{i,j}S_{h\pm}(u)_{ij}\otimes 1\otimes E_{ij}, \text{ and } \ R^{12}:=1\otimes R\] 
as elements in $\End(V)\otimes \End(\mathbb{C}^{m|n})\otimes \End(\mathbb{C}^{m|n})$.
\end{thm}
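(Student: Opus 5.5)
The plan follows the strategy of \cite{Xu} and goes through an irregular Knizhnik--Zamolodchikov (KZ) type system in two auxiliary variables. We work in $\End(V)\otimes\End(\mathbb{C}^{m|n})\otimes\End(\mathbb{C}^{m|n})$ and consider
\[
\frac{\partial F}{\partial z_1}=h\Bigl(u^{(1)}+\frac{T^{(1)}}{z_1}+\frac{\Omega^{12}}{z_1-z_2}\Bigr)F,\qquad
\frac{\partial F}{\partial z_2}=h\Bigl(u^{(2)}+\frac{T^{(2)}}{z_2}+\frac{\Omega^{12}}{z_2-z_1}\Bigr)F .
\]
Here $\Omega^{12}=\sum_{k,l}(-1)^{\bar l}\,1\otimes E_{kl}\otimes E_{lk}$ is the super Casimir, which acts as the super flip on the two auxiliary factors. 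The first step is to check flatness. This reduces to the super commutation relations \eqref{eq:glmn-super-rel-sec2} together with two identities: $[T^{(1)}+T^{(2)},\Omega^{12}]=0$ (the $\mathfrak{gl}_{m|n}$-invariance of $\Omega$) and $[u^{(1)}+u^{(2)},\Omega^{12}]=0$. All Koszul signs are governed by the super tensor rule \eqref{eq:super_tensor}.

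Next we study this system in two asymptotic regimes and compare its canonical solutions.
\begin{itemize}
\item In the regime $|z_1|\ll |z_2|$ with both large (or with $z_2\to\infty$ first), the solutions factor asymptotically. One factor is $F^{(1)}_d(z_1)F^{(2)}_d(z_2)$, built from the canonical solutions of Proposition~\ref{prop:canonical-solution-sec2} in each slot. The other is a regular-singular factor in $z_1/z_2$ governed by $\Omega^{12}$.
\item In the regime $|z_2|\ll|z_1|$ the same holds with the roles of the two slots swapped.
\end{itemize}
Comparing sectorial solutions obtained by moving $z_1,z_2$ around one another, in directions $d$ and $d\pm\pi$, gives two factorizations of the same connection element. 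Writing $\Omega$-monodromy contributions as $e^{\pm\pi\mathi h\Omega^{12}}$, we obtain relations of the shape
\[
e^{\pi\mathi h\Omega^{12}}\,S^{\pm(1)}_d S^{\pm(2)}_d=S^{\pm(2)}_d S^{\pm(1)}_d\,e^{\pi\mathi h\Omega^{12}} ,
\]
and the analogous mixed relation for one $+$ and one $-$. The genuinely nontrivial ingredient is that the $\Omega$ part must be replaced by the Stokes data of the rank-two system \eqref{eq:rank-twosystemsec2}. This is exactly where Theorem~\ref{thm:super-r-matrix} enters. The connection factor for the two auxiliary slots is the Stokes matrix of the system with $V=\mathbb{C}^{m|n}$ dual, which equals $R^{-1}$ after renormalization. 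This step produces \eqref{eq:rll-plus}.

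To pass from this to the stated form, we conjugate by the formal monodromies $e^{\pi\mathi h\delta T}$ as in Definition~\ref{def:renorm-Stokes}. The diagonal parts of the $\Omega$-contributions combine with $\delta T^{(1)}+\delta T^{(2)}$ to give the diagonal terms $q^{(-1)^{\bar i}}E_{ii}\otimes E_{ii}+\sum_{i\ne j}E_{ii}\otimes E_{jj}$ of $R$. Relation \eqref{eq:rll-minus} then follows either by the symmetric argument with $d-\pi$, or by combining \eqref{eq:rll-plus} with Lemma~\ref{lem:stokes-periodicity-sec2} and the monodromy relation $S_d^-S_d^+$ expressed through $e^{2\pi\mathi h\delta T}$. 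Relation \eqref{eq:rll-mixed} comes from the transition between sectors $d$ and $d+\pi$ in one variable and $d-\pi$ in the other.

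An alternative, more elementary route is also available as a check. We expand both sides of \eqref{eq:rll-plus} entrywise, using triangularity (Proposition~\ref{prop:triangularity-sec2}), and reduce the relations among entries to relations derivable from the isomonodromy equations in $u$. I would use this only to verify signs.

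I expect the main obstacle to be the rigorous asymptotic analysis of the KZ-type system in the two regimes. This means establishing existence and uniqueness of canonical solutions with factorized asymptotics, and identifying the connection matrix between them with the product of the one-variable Stokes matrices and the rank-two Stokes matrix. Doing this while tracking the Koszul signs correctly, so that the resulting $R$ is exactly \eqref{eq:standard-r-matrix} rather than a sign-twisted version, is the delicate part. I would first try to import the analytic arguments of \cite{Xu} verbatim, using that $\mathcal{A}_V$ is a Banach algebra, and handle the signs via the supermatrix identification of Remark~\ref{rmk:stokes mat}.
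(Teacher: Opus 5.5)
Your overall architecture matches the paper's: a super irregular KZ system, connection factors given by one-variable Stokes matrices, Theorem~\ref{thm:super-r-matrix} supplying $R^{-1}=S^+_d(P)e^{\pi\mathi h\delta P}$, and a final renormalization by formal monodromies. However, the step that is supposed to produce $R$ is misidentified, and as you describe it the argument does not close.

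In this irregular system there is no regular-singular factor in $z_1/z_2$ and no monodromy contribution $e^{\pm\pi\mathi h\Omega^{12}}$. With $z_1=z$, $z_2=zt$, the $t$-equation contains $zu^{(2)}$. Your two regimes ($t\to0$ and $t\to\infty$) give the factorizations $\widehat Y\cdot(\tfrac{t-1}{1-t})^{-h\delta P}={}_1\widehat W(z,t)\,\widehat F^{(2)}(zt)$ and $\widehat Y={}_2\widehat W(z,t)\,\widehat F^{(1)}(z)$. In the second, the companion of $F^{(1)}(z_1)$ is ${}_2W$, whose exponential part is $e^{hz_2u^{(2)}}z_2^{h\delta(T^{(2)}+P)}$; it is not $F^{(2)}(z_2)$ times a regular-singular factor. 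Winding $t$ near $0$ or $\infty$ only produces $e^{\pm\pi\mathi h\delta P}S^+_d(T^{(2)})e^{\mp\pi\mathi h\delta P}$ and $S^+_d(T^{(1)})$. The rank-two Stokes matrix has nowhere to enter. So your relation with $e^{\pi\mathi h\Omega^{12}}$ is not something this system yields, and ``replacing the $\Omega$ part by the rank-two Stokes data'' is asserted rather than derived.

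The missing idea is the third degeneration $t\to1$, where $z_2-z_1=z(t-1)$ becomes the relevant large variable. There $\widehat Y=\widehat U(z,t)\,\widehat X(\eta)$ with $\eta=z(t-1)$. The factor $\widehat X$ solves exactly the rank-two system $\frac{\mathrm{d}X}{\mathrm{d}\eta}=h(u^{(2)}+P/\eta)X$ of Theorem~\ref{thm:super-r-matrix}, and the first factor is holomorphic at $t=1$ (Theorem~\ref{thm:Y-decomp}). Moving $t$ half-way around $1$ rotates $\arg\eta$ by $\pi$ and yields the connection factor $S^+_d(P)$. Comparing the six chamber solutions of Definition~\ref{def:Six-solutions} along two continuation paths from $Y_{012}$ to $Y_{210}$ then gives the hexagon identity \eqref{eq:naive-RLL}. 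Right-multiplying by $e^{\pi\mathi h(\delta T^{(1)}+\delta T^{(2)}+\delta P)}$ and using Lemma~\ref{lem:commutativity} gives \eqref{eq:rll-plus}. In that step the $\delta T^{(i)}$ are absorbed into $S^{(i)}_{h+}$, and the diagonal of $R$ comes from $e^{\pi\mathi h\delta P}$ alone, not from $\delta T^{(1)}+\delta T^{(2)}$.

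Two further points. First, comparing resummations while $t$ moves requires the isomonodromy argument of Theorem~\ref{thm:Y-on-chamber}, which your sketch does not address. The eigenvalues $u_i+tu_k$ of $u^{(1)}+tu^{(2)}$ collide at $t=(u_j-u_i)/(u_k-u_l)$, creating moving anti-Stokes rays in $z$. One must show that their Stokes factors are trivial, so that resummation along $d$ is defined on whole chambers cut out only by $\mathcal{R}_0(d)\cup\mathcal{R}_1(d)$.

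Second, your $\Omega^{12}$ has the wrong sign. The Casimir specialized to two copies of the dual natural representation (the one producing $T^{(1)},T^{(2)}$) is $P=-\sum_{k,l}(-1)^{\bar l}\,1\otimes E_{kl}\otimes E_{lk}$, i.e.\ minus the super flip. Flatness needs, besides the two identities you list, $[T^{(1)},T^{(2)}+\Omega^{12}]=0$; this is the coefficient of $1/(z_1z_2)$ in the commutator of the two connection coefficients. Since $[T^{(1)},T^{(2)}+P]=0$ while $[T^{(1)},P]\neq0$ in general, this condition fails for your choice $\Omega^{12}=-P$. Moreover, Theorem~\ref{thm:super-r-matrix} is computed for $P$, and the opposite sign would replace $q$ by $q^{-1}$ in the resulting matrix.
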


The proof is based on the super irregular KZ equations associated with \(\mathfrak{gl}_{m|n}\).  
More precisely, in Section~\ref{sec:3.1} we realize the coefficient matrices \(T^{(1)},T^{(2)}\) and \(P\) from the Casimir element of \(\mathfrak{gl}_{m|n}\). In Section \ref{sec:3.2}, we construct the formal solution $\widehat{Y}(z,t)$ in Proposition \ref{thm:FormalSolY} of the above system and introduce three factorization formulas in Lemma \ref{lem:DecompY}. In Section \ref{sec:3.3}, we prove the holomorphic properties of decomposed functions in Theorem \ref{thm:Y-decomp}, and in Section \ref{sec:3.4} we use ismonodromy arguments to trivialize some Stokes rays of $\widehat{Y}(z,t)$, and in Theorem \ref{thm:Y-on-chamber} we construct holomorphic solutions on different $t-$chambers (Definition \ref{def:Six-solutions}). Finally in Section \ref{sec:3.5}, we derive explicit connection formulae \eqref{eq:step-II2}, \eqref{eq:step-I1}-\eqref{eq:step-II3} between the solutions on different $t-$chambers, using which we prove Theorem \ref{thm:RLL-relation}.

\input{final_sections/section3/Section3.1}

\input{final_sections/section3/Section3.2}

\input{final_sections/section3/Section3.3}

\input{final_sections/section3/Section3.4}

\input{final_sections/section3/Section3.5}

%which cut the $t-$plane into several connected components, as figure \ref{pic:t-plane} shows. Fix $t\notin \mathrm{aSR}_t(hu;d)$ and perform the Borel-Laplace transformation with respect to $z$, it is well known that the resummation process is a differential algebra homomorphism between formal power series and holomorphic functions, which are related by asymptotic expansion in sectorial regions. More precisely, we have:

%\input{final_pictures/pic_minipage}
%\input{final_pictures/pic_RLL}
%\input{final_pictures/pic_t-plane}
%\input{final_pictures/pic_RLL}

%% file: final_sections/section3/Section3.1.tex
\subsection{The super irregular KZ equations for \texorpdfstring{$\mathfrak{gl}_{m|n}$}{gl(m|n)}}
\label{sec:3.1}

We begin with the super irregular KZ equations associated with $\mathfrak{gl}_{m|n}$. Following \cite{Geer2007}, we take the Casimir element
\begin{equation*}\label{eq:normalized-casimir-glmn}
\Omega:=-\sum_{i,j=1}^{m+n}(-1)^{\bar j}e_{ij}\otimes e_{ji}.
\end{equation*}
For $\mathfrak{gl}_{m|n}$-modules $W_0,\dots,W_N$, the corresponding super irregular KZ system for an $
{\rm End}(W_0\otimes W_1\otimes \cdots \otimes W_N)
$ valued function $\Phi(z_1,...,z_N)$
is
\begin{equation}\label{eq:general-super-irregular-kz}
\frac{1}{h}\frac{\partial \Psi}{\partial z_i}
=
\left(
u^{(i)}+\frac{\Omega_{0i}}{z_i}+\sum_{j\neq i}\frac{\Omega_{ij}}{z_i-z_j}
\right)\Psi,
\qquad i=1,\dots,N,
\end{equation}
where \(u^{(i)}\) denotes the action of $u\in\mathfrak{h}_{\rm reg}$ on the \(i\)-th tensor factor, and \(\Omega_{ab}\) denotes the action of \(\Omega\) on the \(a\)-th and \(b\)-th tensor factors. Thus the irregular part is governed by \(u^{(i)}\), while the logarithmic interactions are encoded by the Casimir element.

\begin{pro}\label{pro:general-super-irregular-kz-flat}
The system \eqref{eq:general-super-irregular-kz} is compatible.
\end{pro}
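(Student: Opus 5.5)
Write $A_i(z):=u^{(i)}+\frac{\Omega_{0i}}{z_i}+\sum_{j\neq i}\frac{\Omega_{ij}}{z_i-z_j}$, so the system \eqref{eq:general-super-irregular-kz} reads $\partial_i\Psi=hA_i\Psi$. Compatibility means that the connection $\nabla=d-h\sum_i A_i\,dz_i$ is flat. This is equivalent to
\[
\partial_jA_i=\partial_iA_j,\qquad [A_i,A_j]=0,
\]
where the bracket is the ordinary commutator in $\End(W_0\otimes\cdots\otimes W_N)$. Every $A_i$ is an even operator, since $u$ is even and $\Omega$ is even in $U(\mathfrak{gl}_{m|n})^{\otimes 2}$, so the ordinary and super commutators agree here.

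The first condition is immediate. $\partial_j\frac{\Omega_{ij}}{z_i-z_j}=\frac{\Omega_{ij}}{(z_i-z_j)^2}$, and this is symmetric in $i,j$ because $\Omega_{ij}=\Omega_{ji}$. The symmetry holds since $\Omega$ is invariant under the super flip $e_{ij}\otimes e_{ji}\mapsto(-1)^{(\bar i+\bar j)^2}e_{ji}\otimes e_{ij}$, after reindexing and checking the sign $(-1)^{\bar j}$ against $(-1)^{\bar i}(-1)^{\bar i+\bar j}$.

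The core of the argument is the second condition. I would first establish the standard algebraic identities for the super Casimir.

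(a) $\Omega$ commutes with $\Delta(x)=x\otimes1+1\otimes x$ for all $x\in\mathfrak{gl}_{m|n}$. Hence $[\Omega_{ab},\Omega_{cd}]=0$ for disjoint pairs, and the infinitesimal braid relation $[\Omega_{ab},\Omega_{ac}+\Omega_{bc}]=0$ holds for distinct $a,b,c$ (indices include $0$).

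(b) $[u^{(a)}+u^{(b)},\Omega_{ab}]=0$ and $[u^{(a)},u^{(b)}]=0$.

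Identity (a) is checked directly from the relations \eqref{eq:glmn-super-rel-sec2}. The only care needed is with Koszul signs from the super tensor product \eqref{eq:super_tensor} when moving $e_{kl}$ past the tensor factors. Identity (b) follows from (a) applied to $x=u\in\mathfrak h$.

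Expanding $[A_i,A_j]$ then gives several groups of terms.

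The $[u^{(i)},u^{(j)}]$ term vanishes.

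The cross terms combine as $[u^{(i)},\frac{\Omega_{ij}}{z_j-z_i}]+[\frac{\Omega_{ij}}{z_i-z_j},u^{(j)}]=-\frac{1}{z_i-z_j}[u^{(i)}+u^{(j)},\Omega_{ij}]=0$. The remaining cross terms $[u^{(i)},\Omega_{0j}]$ and $[u^{(i)},\Omega_{jk}]$ with $i\notin\{0,j,k\}$ vanish because the operators act on disjoint factors. The terms $[u^{(i)},\Omega_{0j}]$ with $i=j$ need a separate check: $[u^{(i)},\frac{\Omega_{0j}}{z_j}]$ pairs with nothing else. This is the point I expect to be the main obstacle. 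These terms appear as $[u^{(i)},\Omega_{0i}]/z_i$ in $[u^{(i)},A_j]$ only when $i=j$, so they are self-commutators and do not occur. The only genuinely dangerous mixed term, $[\Omega_{0i}/z_i,u^{(j)}]$, acts on different factors for $i\ne j$ and so vanishes. This confirms that $u^{(0)}$ is absent, which is exactly why the system is consistent.

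The purely rational part is the usual KZ computation. Take the terms involving $\Omega_{0i},\Omega_{0j},\Omega_{ij}$ together with a third index $k$, and treat $0$ as a marked point at $z_0=0$. Applying the braid relation in (a) to the partial fraction identity
\[
\frac{1}{(z_i-z_k)(z_j-z_k)}=\frac{1}{z_i-z_j}\Bigl(\frac{1}{z_j-z_k}-\frac{1}{z_i-z_k}\Bigr)
\]
makes everything cancel. Hence $[A_i,A_j]=0$, which proves compatibility.
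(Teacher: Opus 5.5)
Your proof is correct and follows the same route as the paper. Both reduce compatibility to $\partial_{z_j}A_i=\partial_{z_i}A_j$ and $[A_i,A_j]=0$, and derive these from the evenness, super-symmetry and invariance of $\Omega$ via $[\Omega_{ab},\Omega_{cd}]=0$, the infinitesimal braid relation and $[u^{(a)}+u^{(b)},\Omega_{ab}]=0$. You additionally write out the cross-term cancellation and the partial-fraction step that the paper leaves implicit; your worry about the terms $[u^{(i)},\Omega_{0i}]$ is moot, since they never occur in $[A_i,A_j]$ for $i\neq j$.
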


\begin{proof}
Let
\[
A_i:=u^{(i)}+\frac{\Omega_{0i}}{z_i}+\sum_{j\neq i}\frac{\Omega_{ij}}{z_i-z_j}.
\]
Since $\Omega$ is even, super-symmetric, and invariant, it satisfies
\[
|\Omega|=0,\qquad \tau(\Omega)=\Omega,\qquad [x\otimes 1+1\otimes x,\Omega]=0
\quad \text{for all }x\in \mathfrak{gl}(m|n),
\]
where $\tau(a\otimes b)=(-1)^{|a||b|}b\otimes a$ is the super-flip. These identities imply the relations
\[
[\Omega_{ab},\Omega_{cd}]=0,\qquad
[\Omega_{ab},\Omega_{ac}+\Omega_{bc}]=0,\qquad
[u^{(a)}+u^{(b)},\Omega_{ab}]=0,
\]
whenever the indices are pairwise distinct. Hence \(\partial_{z_j}A_i=\partial_{z_i}A_j\) and \([A_i,A_j]=0\), the system is compatible.
\end{proof}

We now specialize to the case needed for the proof of the RLL relation. Take \(N=2\), let \(W_0=W\) be any finite-dimensional \(\mathfrak{gl}_{m|n}\)-module, and let the second and third tensor factors be \(\mathbb C^{m|n}\) equipped with the dual natural representation
\begin{equation*}\label{eq:dual-natural-representation}
\rho(e_{ij})=-(-1)^{\bar i(\bar j+1)}E_{ji}.
\end{equation*}
Writing the resulting system in matrix form with respect to the last two tensor factors, we obtain a system for an \(\End(W)\otimes \End(\mathbb C^{m|n})\otimes \End(\mathbb C^{m|n})\)-valued function \(Y(z_1,z_2)\), in which
\[
\Omega_{01}=T^{(1)},\qquad \Omega_{02}=T^{(2)},\qquad \Omega_{12}=P,
\]
where
\begin{equation*}\label{eq:specialized-kz-operators}
T^{(1)}=\sum_{k,l}(-1)^{\bar k\bar l}e_{kl}\otimes E_{kl}\otimes 1,\qquad
T^{(2)}=\sum_{k,l}(-1)^{\bar k\bar l}e_{kl}\otimes 1\otimes E_{kl},\qquad
P=-\sum_{k,l}(-1)^{\bar l}1\otimes E_{kl}\otimes E_{lk}.
\end{equation*}
Thus the case \(N=2\) of \eqref{eq:general-super-irregular-kz} becomes
\begin{align}
\frac{1}{h}\frac{\partial Y}{\partial z_1}
&=
\left(
u^{(1)}+\frac{T^{(1)}}{z_1}+\frac{P}{z_1-z_2}
\right)\cdot Y,
\label{eq:super-irregular-kz-z1}
\\
\frac{1}{h}\frac{\partial Y}{\partial z_2}
&=
\left(
u^{(2)}+\frac{T^{(2)}}{z_2}+\frac{P}{z_2-z_1}
\right)\cdot Y,
\label{eq:super-irregular-kz-z2}
\end{align}
where
\[
u^{(1)}=\sum_i 1\otimes u_iE_{ii}\otimes 1,\qquad
u^{(2)}=\sum_i 1\otimes 1\otimes u_iE_{ii}.
\]
%As a specialization of the compatible system \eqref{eq:general-super-irregular-kz}, the system \eqref{eq:super-irregular-kz-z1}--\eqref{eq:super-irregular-kz-z2} is compatible as well.

For the asymptotic analysis below, it is convenient to separate the overall irregular scale from the relative position of the two points by introducing$z_1=z,\,z_2=zt$. In these coordinates, \eqref{eq:super-irregular-kz-z1}--\eqref{eq:super-irregular-kz-z2} becomes
\begin{align}
\frac{1}{h}\frac{\partial Y}{\partial z}
&=
\left(
u^{(1)}+t\,u^{(2)}+\frac{T^{(1)}+T^{(2)}+P}{z}
\right)\cdot Y,
\label{eq:super-irregular-kz-z}
\\
\frac{1}{h}\frac{\partial Y}{\partial t}
&=
\left(
z\,u^{(2)}+\frac{T^{(2)}}{t}+\frac{P}{t-1}
\right)\cdot Y.
\label{eq:super-irregular-kz-t}
\end{align}
%In what follows, we consider only super systems, and we drop the subscript "s" in $\cdot_s$ for simplicity, when no confusion arises.
\begin{rmk}\label{rmk:casimir-normalization}
Compared with the standard convention in the literature, the Casimir element in \eqref{eq:normalized-casimir-glmn} differs by an overall minus sign. We adopt this normalization so that its specialization gives exactly the operators \(T\) and \(P\).
\end{rmk}

%% file: final_sections/section3/Section3.2.tex
\subsection{Formal solution of KZ equation in \texorpdfstring{$z,t$}{z,t} variables and its decomposition}\label{sec:3.2}

\begin{pro}\label{thm:FormalSolY}
For any fixed $h\notin \mathbb{Q}$ and $u\in \hreg$, the system of equations \eqref{eq:super-irregular-kz-z} and \eqref{eq:super-irregular-kz-t} has a unique formal fundamental solution taking the form 
\begin{align}\label{eq:FormalSolY}
\widehat{Y}(z,t)=\widehat{Q}(z;t) e^{ hz(u^{(1)}+tu^{(2)})}z^{h(\delta T^{(1)}+\delta T^{(2)}+\delta P)}t^{h\delta T^{(2)}}(t-1)^{h\delta P}, 
\end{align}
where $\widehat{Q}=1+Q_1z^{-1}+Q_2z^{-2}+\cdot\cdot\cdot$ is a formal power series in $z^{-1}$, and each coefficient $Q_r(t)\in {\rm End}(V)\otimes {\rm End}(\IC^{m|n})\otimes {\rm End}(\IC^{m|n})$ is a rational function of $t$ having poles at $t=0,1,\infty$.
\end{pro}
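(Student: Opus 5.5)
The plan is to imitate the proof of Proposition~\ref{prop:formal-solution-sec2}, using the $z$-equation \eqref{eq:super-irregular-kz-z} to construct $\widehat Q$ and the $t$-equation \eqref{eq:super-irregular-kz-t} to control its $t$-dependence.

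\textbf{Setup.} Write $A:=T^{(1)}+T^{(2)}+P$ and $\Lambda(t):=u^{(1)}+tu^{(2)}$. For generic $t$ the eigenvalues of $\Lambda(t)$ are $u_i+tu_j$, indexed by pairs $(i,j)$. The corresponding blocks of $\mathcal A:=\End(V)\otimes\End(\IC^{m|n})\otimes\End(\IC^{m|n})$ are the entries $X_{(i,j),(k,l)}\otimes E_{ik}\otimes E_{jl}$. Let $\delta A$ be the part of $A$ with $(i,j)=(k,l)$. One checks directly that $\delta A=\delta T^{(1)}+\delta T^{(2)}+\delta P$, where $\delta P=-\sum_k(-1)^{\bar k}1\otimes E_{kk}\otimes E_{kk}$. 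In particular $\delta A$ is even, diagonal and independent of $t$, and it commutes with $\Lambda(t)$, $\delta T^{(2)}$ and $\delta P$.

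\textbf{Construction from the $z$-equation.} Substituting \eqref{eq:FormalSolY} into \eqref{eq:super-irregular-kz-z} and comparing coefficients of $z^{-r-1}$ gives, exactly as in \eqref{eq:recursion-sec2},
\[
[Q_{r+1},\Lambda(t)]=\tfrac{r}{h}Q_r+A\,Q_r-Q_r\,\delta A .
\]
The $t$-dependent factors $t^{h\delta T^{(2)}}(t-1)^{h\delta P}$ commute with everything on the right and drop out. I then solve this recursion blockwise.

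For an off-diagonal block $(i,j)\neq(k,l)$ the left-hand side is $\bigl((u_k-u_i)+t(u_l-u_j)\bigr)Q_{r+1,(i,j),(k,l)}$. This determines the block uniquely as a rational function of $t$, but a priori it has an extra pole where $(u_k-u_i)+t(u_l-u_j)=0$.

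The diagonal block $(i,j)=(k,l)$ gives an equation $\bigl(\tfrac{r}{h}+\mathrm{ad}_{\delta A_{(i,j)}}\bigr)Q_{r,(i,j),(i,j)}=(\text{known})$. Here $\delta A_{(i,j)}=(-1)^{\bar i}e_{ii}+(-1)^{\bar j}e_{jj}+\text{scalar}$, so the scalar drops out of the commutator. Hence $\mathrm{ad}_{\delta A_{(i,j)}}$ has integer eigenvalues on $\End(V)$, and the operator is invertible since $h\notin\mathbb Q$. The operator is also independent of $t$, so this step introduces no new poles.

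By induction this gives existence and uniqueness of $\widehat Q$ solving the $z$-equation, with rational coefficients. The parity statement follows as in Proposition~\ref{prop:formal-solution-sec2}.

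\textbf{The $t$-equation and the location of poles.} Next I show that $\widehat Y$ also solves \eqref{eq:super-irregular-kz-t}. Set
\[
G:=\tfrac1h\partial_t\widehat Y\cdot\widehat Y^{-1}-\bigl(zu^{(2)}+\tfrac{T^{(2)}}{t}+\tfrac{P}{t-1}\bigr).
\]
By the flatness of the system (Proposition~\ref{pro:general-super-irregular-kz-flat} specialized to $z,t$), $G$ satisfies $\partial_zG=h[B,G]$, where $B$ is the coefficient of \eqref{eq:super-irregular-kz-z}. One checks that $G$ is a formal series in $z^{-1}$ with no constant term: the $z$-linear terms and the diagonal $t$-terms cancel precisely because of the factors $e^{hztu^{(2)}}t^{h\delta T^{(2)}}(t-1)^{h\delta P}$. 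The same recursion as above, using the nonresonance $h\notin\mathbb Q$ on the diagonal blocks, then forces $G=0$.

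Once the $t$-equation holds, comparing coefficients in it yields
\[
[Q_{r+1},u^{(2)}]=\tfrac1h\partial_tQ_r+\bigl(\tfrac{T^{(2)}}{t}+\tfrac{P}{t-1}\bigr)Q_r-Q_r\bigl(\tfrac{\delta T^{(2)}}{t}+\tfrac{\delta P}{t-1}\bigr).
\]
This identity removes the spurious poles, by induction on $r$:
\begin{itemize}
\item blocks with $j\neq l$ are determined by dividing by the constant $u_l-u_j$, with a right-hand side whose poles lie only at $0,1$;
\item blocks with $j=l$, $i\neq k$ come from the $z$-recursion, where the denominator $u_k-u_i$ is constant;
\item diagonal blocks were already handled.
\end{itemize}
Hence each $Q_r$ is rational with poles only at $t=0,1,\infty$.

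I expect the main obstacle to be the step $G=0$, i.e.\ the compatibility of the formal solution with the $t$-equation. It requires careful tracking of super signs when commuting $\delta T^{(2)},\delta P$ past $A$, and verifying that the leading terms of $G$ cancel exactly.
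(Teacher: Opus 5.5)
Your proposal is correct. Its skeleton is the paper's: the same $z$-recursion $[Q_{r+1},u^{(1)}+tu^{(2)}]=\frac{r}{h}Q_r+AQ_r-Q_r\,\delta A$, and the same blockwise solution. Off the diagonal you divide by $(u_k-u_i)+t(u_l-u_j)$; on the diagonal you invert the $t$-independent operator $\frac{r}{h}+\mathrm{ad}_{\delta A_{(i,j)}}$ using $h\notin\mathbb Q$. You also use the same compatibility argument, in which the defect $G$ satisfies the $z$-equation, has zero leading term, and is therefore killed by the nonresonant recursion. Your gauge form $\frac1h\partial_t\widehat Y\,\widehat Y^{-1}-M$ versus the paper's $\frac1h\partial_t\widehat Y-M\widehat Y$ makes no difference.

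The genuine difference is how you locate the poles. The paper works on $D_u$ and then asserts that, because the $t$-equation has no singularities besides $t=0,1$, the formal solution "naturally extends". You instead take the $t$-equation coefficient identity for $[Q_{r+1},u^{(2)}]$ and run an induction in which every block of $Q_{r+1}$ is obtained in one of three $t$-independent ways:
\begin{itemize}
\item dividing by the constant $u_l-u_j$ when $j\neq l$;
\item dividing by the constant $u_k-u_i$ when $j=l$, $i\neq k$;
\item inverting a $t$-independent operator for the diagonal blocks.
\end{itemize}
This proves explicitly that the spurious poles $t=-(u_k-u_i)/(u_l-u_j)$ cancel. The paper's wording leaves this implicit. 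Since the $t$-equation couples $Q_r$ to $Q_{r+1}$ through $z[\widehat Q,u^{(2)}]$, it is not an ODE for the individual coefficients, so a continuation argument really needs your recursion. Your route is more rigorous; the paper's is shorter but relies on that unstated step.

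Two small corrections. First, the $\partial_t$ term in your $t$-recursion has the wrong sign. It should read $[Q_{r+1},u^{(2)}]=-\tfrac1h\partial_tQ_r+\bigl(\tfrac{T^{(2)}}{t}+\tfrac{P}{t-1}\bigr)Q_r-Q_r\bigl(\tfrac{\delta T^{(2)}}{t}+\tfrac{\delta P}{t-1}\bigr)$; this does not affect the pole count.

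Second, the vanishing of the constant term of $G$ is not explained by the exponential factors alone. Those factors cancel the $z$-linear term and the diagonal $t$-terms, but what remains is $[Q_1,u^{(2)}]-\frac{T^{(2)}-\delta T^{(2)}}{t}-\frac{P-\delta P}{t-1}$. This vanishes only after you use the $r=0$ recursion blockwise. The blocks of $T^{(2)}-\delta T^{(2)}$ sit where the eigenvalue gap is $t(u_l-u_j)$, and those of $P-\delta P$ sit at $((a,b),(b,a))$ where the gap is $(u_b-u_a)(1-t)$. This is exactly the paper's step of splitting $[Q_1,u^{(1)}+tu^{(2)}]$ into its $u^{(1)}$- and $u^{(2)}$-parts, and you should include it.
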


\begin{proof}
%\rednote{to be modified, see finalproofsFormalSolY}
%\input{final_proofs/3.2-FormalSol-Y}

%\rednote{HERE SEPARATION}

\input{final_proofs/3.2-Formal-Sol-Y}
\end{proof}

In the rest of this subsection, we  present three decompositions of $\widehat{Y}$. As will become clear, these decompositions correspond to the neighborhoods of the singular points $t=0,\infty,1$, respectively. In each case, the former factor is holomorphic near the corresponding point, while the latter captures the local singular behavior, see Theorem \ref{thm:Y-decomp}.

\begin{pro}\label{lem:DecompY}
    \input{final_bigThms/3.2-Lem-DecompY}
\end{pro}

\begin{proof}
\input{final_bigThms/3.2-Proof-Lem-DecompY}
\end{proof}

\begin{rmk}\label{rmk:multivalue_1-t}
    In the first factorization of Proposition \ref{lem:DecompY}, the multi-valued function $(1-t)^{h\delta P}$ appearing in 
$_1\widehat{K}\!=\!(1\!-\!t)^{h\delta P}\!+\!O(z^{-1})$ is taken to have the expansion $\mathrm{Id}+O(t)$. That is the branch $\log(1-t)|_{t=0}=0$ is chosen, while the branch of $(t-1)^{h\delta P}$ in the singular part of $\widehat{Y}(z,t)$ is not fixed. Thus, throughout this paper, the factor $(\frac{t-1}{1-t})^{h\delta P}$ is computed by
\begin{equation*}
    \left(\frac{t-1}{1-t}\right)^{h\delta P}=
    \begin{cases}
    e^{(2k+1)\pi\mathi h\delta P},\quad  \text{ if }\arg(t-1)\in(2k\pi,(2k+1)\pi);\\
    e^{(2k-1)\pi\mathi h\delta P},\quad \text{ if }\arg(t-1)\in((2k-1)\pi, 2k\pi).
    \end{cases}
    \end{equation*}
\end{rmk}

\begin{comment}
We now consider the $z,t$-systems satisfied by the second factors in the above decompositions.
\begin{pro}\label{thm:reduced-equation}
In each of the three cases of Lemma \ref{lem:DecompY}, the $z,t$-system of the second component reduces to a single equation via a suitable change of variables. More precisely, we have
\begin{enumerate}
    \item[(a).]($t=0$) 
    Under $\sigma_1=z,\,\eta=zt$, $\widehat{F}^{(2)}$ in \eqref{eq:zt-funcW1F2} satisfy $\partial \widehat{F}^{(2)}/\partial \sigma_1=0$ and
    \begin{equation}\label{eq:monoF2}
        \frac{\partial \widehat{F}^{(2)}}{\partial \eta}=\left(u^{(2)}+\frac{T^{(2)}}{\eta}\right)\widehat{F}^{(2)}.
    \end{equation}

    \item[(b).]($t=\infty$) 
    Under $\sigma_1=zt,\,\eta=z$, $\widehat{F}^{(1)}$ in \eqref{eq:zt-funcW2F1} satisfy $\partial \widehat{F}^{(1)}/\partial \sigma_1=0$ and
    \begin{equation}\label{eq:monoF1}
        \frac{\partial \widehat{F}^{(1)}}{\partial \eta}=\left(u^{(1)}+\frac{T^{(1)}}{\eta}\right)\widehat{F}^{(1)}.
    \end{equation}

    \item[(c).]($t=1$) 
    Under $\sigma_1=z,\,\eta=zt-z$, $\widehat{X}$ in \eqref{eq:zt-funcUX} satisfy $\partial \widehat{X}/\partial \sigma_1=0$ and
    \begin{equation}\label{eq:monoX}
        \frac{\partial \widehat{X}}{\partial \eta}=\left(u^{(2)}+\frac{P}{\eta}\right)\widehat{X}.
    \end{equation}
\end{enumerate}
\end{pro}

\begin{proof}
    Direct computation by chain rule.
\end{proof}
\end{comment}

%\input{final_bigThms/3.2-Rem-tEquations}

%% file: final_proofs/3.2-Formal-Sol-Y.tex
Given $u\in\hreg$, define the domain
\begin{equation}\label{eq:t-domain}
D_{u}:=\left\{t\in \IC~|~t\ne \frac{u_j-u_i}{u_k-u_l}, \text{ for any } i,j, k,l\right\}.\end{equation} 
We will first construct the unique formal solution $\widehat{Y}(z;t)$ of \eqref{eq:super-irregular-kz-z} for $t\in D_u$ as in Proposition \ref{prop:formal-solution-sec2}, then prove that this function satisfies \eqref{eq:super-irregular-kz-t} and extend the definition to all $t$.

\smallskip
\noindent\textit{\textbf{Step 1. Recurrence relation}}

Inserting \eqref{eq:FormalSolY} into \eqref{eq:super-irregular-kz-z} and comparing the coefficients of $z^{-r-1}$, we derive the equation for $\widehat{Q}$,
\begin{align}\label{eq:Qhat}
\frac{1}{h}\frac{\dd \hat{Q}}{\dd z}+\hat{Q} \left(u^{(1)}+t u^{(2)} +
\frac{\delta T^{(1)}+\delta T^{(2)}+\delta P}{z}\right)
=\left(u^{(1)}+t u^{(2)}+
\frac{T^{(1)}+T^{(2)}+P}{z}\right) \hat{Q},
\end{align}
and the recurrence relation

\begin{align}\label{eq:Qm}
[Q_{r+1}(t), u^{(1)}+t u^{(2)}]
=\left(\frac{r}{h}+T^{(1)}+T^{(2)}+P\right)\cdot Q_{r}(t)-Q_{r}(t)\cdot  \left(\delta T^{(1)}+\delta T^{(2)}+\delta P\right).
\end{align}
Let us write $Q_r=\sum_{i,j,k,l} Q_r^{ijkl}\otimes E_{ij}\otimes E_{kl}$ in terms of the basis $\{E_{ij}\otimes E_{kl}\}_{1\le i,j,k,l \le m+n}$ of ${\rm End}(\mathbb{C}^{m|n})^{\otimes 2}$, where each $Q_r^{ijkl}\in{\rm End}(V)$. Hereafter, we abbreviate $\rho(e_{ij})\in {\rm End}(V)$ as $e_{ij}$. %Recall that ${\rm End}(V)$ is finite dimensional, in contrast to $U(\gl_{m|n})$.

A careful sign-respecting computation then shows that the $ijkl$ component of the right hand side of \eqref{eq:Qm} equals
\begin{align}\label{eq:Qmijkl}
    \sum_{b}\left[(-1)^{\star_1}e_{ib}Q_r^{bjkl}+(-1)^{\star_2}e_{kb}Q_r^{ijbl}\right]-(-1)^{\star_3}Q_r^{kjil}-Q_r^{ijkl}\big[(-1)^{\bj}e_{jj}+(-1)^{\bl}e_{ll}-\delta_{jl}(-1)^{\bl}-\frac{r}{h}\big].
\end{align}
where the parity components are given by
\begin{equation*}
\star_1=\bi\bar{b}+(\bi+\bar{b})|Q_r^{bjkl}|;\quad
\star_2=\bk\bar{b}+(\bk+\bar{b})\left(|Q_r^{ijbl}|+\bi+\bj\right);\quad
\star_3=\bk\bj+\bk\bi+\bi\bj.  
\end{equation*}
In fact, the determination of $Q_r$ below will show that $Q_r^{ijkl}$ is homogeneous of parity $\bi+\bj+\bk+\bl$, analogously to Proposition \ref{prop:formal-solution-sec2}.

\smallskip
\noindent\textit{\textbf{Step 2. Determination when $t\in D_u$}}

Given $t\in D_u$ and the induction assumption that $Q_r^{ijkl}$ is defined for all indices $ijkl$, the $ijkl$ component of the left hand side of \eqref{eq:Qm}, $[(u_j-u_i)+t(u_l-tu_k)]Q_{r+1}^{ijkl}$, can uniquely determine $Q_{r+1}^{ijkl}$ as long as $i\neq j$ or $k\neq l$. We shall refer to these components $(ijkl)$ as the "off-diagonal" part. 

Next, for the diagonal case $i=j$ and $k=l$, we consider the equation $\eqref{eq:Qmijkl}=0$. Replacing $r,ijkl$ by $r+1,iill$ yields a new equation, in which the only undetermined terms are
\begin{align*}
    \left((-1)^{\bi} e_{ii}+(-1)^{\bl} e_{ll}\right)Q_{r+1}^{iill}-Q_{r+1}^{iill}\left((-1)^{\bi} e_{ii}+(-1)^{\bl} e_{ll}-(r+1)/h\right).
\end{align*}
This expression is exactly the operator
\begin{equation*}
    \mathcal{A}_r:=\adjoint_{(-1)^{\bi} e_{ii}+(-1)^{\bl} e_{ll}}+\frac{r+1}{h}{\rm Id}
\end{equation*}
acting on $Q_{r+1}^{iill}$. Since $h\notin \mathbb{Q}$, the eigenvalues of $\mathcal{A}_r$ are all nonzero, therefore $\mathcal{A}_r$ is invertible on the finite dimensional space ${\rm End}(V)$. This concludes that $Q_{r+1}^{iill}$ is uniquely determined by the resulting equation, which finishes the induction.

\smallskip
\noindent\textit{\textbf{Step 3. Solution satisfy \eqref{eq:super-irregular-kz-t} and extension to $t\neq 0,1$}}

Since the complement of $D_u$ is discrete, we first show that $\widehat{Y}(z,t)$ satisfies \eqref{eq:super-irregular-kz-t} on $t\in D_u$; then, using the fact that this equation has no singularities except at $t=0,1$, we naturally extend $\widehat{Y}(z,t)$ to all of $t\neq 0,1$. 

First note that $t^{h\delta T^{(2)}}(t-1)^{h\delta P}$ is independent of $z$; thus $\widehat{Y}(z,t)$ satisfies equation \eqref{eq:super-irregular-kz-z}. By the compatibility of \eqref{eq:super-irregular-kz-z} and \eqref{eq:super-irregular-kz-t}, the function
\begin{equation}\label{eq:Gzt}
G(z,t)=\frac{1}{h}\frac{\dd \widehat{Y}(z,t)}{\dd t}-\left(z u^{(2)}+\frac{T^{(2)}}{t}+\frac{P}{t- 1}\right)\cdot \widehat{Y}
\end{equation}
satisfies equation \eqref{eq:super-irregular-kz-z}. Since we have established the uniqueness of $\widehat{Y}(z,t)$ solving \eqref{eq:super-irregular-kz-z}, to prove that $G(z,t)$ vanishes, it suffices to check that $G(z,t)$ has the same form as that in \eqref{eq:FormalSolY}, with its leading term of the formal power series in $z^{-1}$ equal to zero.

Inserting \eqref{eq:FormalSolY} into \eqref{eq:Gzt} and using the commutativity of all diagonal supertensors, we extract from $G(z,t)$ the exponential factor
\begin{equation*}
e^{hz(u^{(1)}+tu^{(2)})}z^{h\delta (T^{(1)}+T^{(2)}+P)}t^{h\delta T^{(2)}}(t-1)^{h\delta P}.   
\end{equation*}
The remaining part is a formal power series in $z^{-1}$; its leading term is given by
\begin{equation*}
G_0(t)=\left[Q_1(t),u^{(2)}\right]+\frac{\delta T^{(2)}-T^{(2)}}{t}+\frac{\delta P-P}{t-1}.
\end{equation*}
On the other hand, taking $m=0$ in \eqref{eq:Qm} and $Q_0(t)\equiv\mathrm{Id}$ gives
\begin{equation*}
    [Q_1(t),u^{(1)}+t u^{(2)}]=T^{(1)}+T^{(2)}+P-\delta T^{(1)}-\delta T^{(2)}-\delta P.
\end{equation*}
Separating the terms in $u^{(1)}$ and $u^{(2)}$ yields
\begin{align*}
    \left[Q_1(t),u^{(1)}\right]&=\left(T^{(1)}-\delta T^{(1)}\right)+\frac{P-\delta P}{1-t}, \\
    \left[Q_1(t),u^{(2)}\right]&=\frac{T^{(2)}-\delta T^{(2)}}{t}+\frac{P-\delta P}{t-1}.
\end{align*}
This finishes the proof of the proposition.

%% file: final_bigThms/3.2-Lem-DecompY.tex
The formal solution $\widehat{Y}(z,t)$ in \eqref{eq:FormalSolY} have the following three decompositions corresponding to the $t-$poles $0,\infty, 1$. The explicit definition of the factor $(\frac{t-1}{1-t})^{h\delta P}$ is explained in Remark \ref{rmk:multivalue_1-t}.
\begin{enumerate}
\item[(a).] (Around $t=0$) 
$\widehat{Y}(z,t)=\ _1\widehat{W}(z,t)\widehat{F}^{(2)}(z,t)\cdot (\frac{t-1}{1-t})^{h\delta P}$, where $_1\widehat{W}$ and $\widehat{F}^{(2)}$ satisfy compatible systems
\begin{align}\label{eq:zt-funcW1F2}
    &\left\{
    \begin{aligned}
        \frac{1}{h} \frac{\partial _1\widehat{W}}{\partial z} &= \left(u^{(1)}+\frac{T^{(1)}+P}{z}\right)\ _1\widehat{W} + \left[tu^{(2)}+\frac{T^{(2)}}{z},\ _1\widehat{W}\right], \\
        \frac{1}{h} \frac{\partial _1\widehat{W}}{\partial t} &= \left(\frac{P}{t-1}\right)\ _1\widehat{W} + \left[zu^{(2)}+\frac{T^{(2)}}{t},\ _1\widehat{W}\right],
    \end{aligned}
    \right.
    &&
    \left\{
    \begin{aligned}
        \frac{1}{h}\frac{\partial \widehat{F}^{(2)}}{\partial z}&=\left(tu^{(2)}+\frac{T^{(2)}}{z}\right)\widehat{F}^{(2)}, \\
        \frac{1}{h}\frac{\partial \widehat{F}^{(2)}}{\partial t}&=\left(zu^{(2)}+\frac{T^{(2)}}{t}\right)\widehat{F}^{(2)},
    \end{aligned}
    \right.
\end{align}
and are uniquely determined by the forms
\begin{align}
    \label{eq:W1}
    _1\widehat{W}&=\ _1\widehat{K}\cdot e^{hzu^{(1)}}z^{h\delta(T^{(1)}+P)}, \\
    \label{eq:F2}
    \widehat{F}^{(2)}&=\widehat{H}^{(2)}\cdot e^{hztu^{(2)}}z^{h\delta T^{(2)}}t^{h\delta T^{(2)}}.
\end{align}
Here $_1\widehat{K}$ and $\widehat{H}^{(2)}$ are formal power series in $z^{-1}$ with leading terms $(1-t)^{h\delta P}=\mathrm{Id}+O(t)$ (see Remark \ref{rmk:multivalue_1-t}) and $\mathrm{Id}$ respectively. More explicitly, we have 
\[\widehat{Q}=\ _1\widehat{K}\widehat{H}^{(2)}\cdot (1-t)^{-h\delta P}.\]

\item[(b).](Around $t=\infty$) 
$\widehat{Y}(z,t)=\ _2\widehat{W}(z,t)\widehat{F}^{(1)}(z,t)$, where $_2\widehat{W}$ and $\widehat{F}^{(1)}$ satisfy compatible systems
\begin{align}\label{eq:zt-funcW2F1}
    &\left\{
    \begin{aligned}
        \frac{1}{h}\frac{\partial\ _2\widehat{W}}{\partial z}&=\left(tu^{(2)}+\frac{T^{(2)}+P}{z}\right)\ _2\widehat{W}+\left[u^{(1)}+\frac{T^{(1)}}{z},\ _2\widehat{W}\right],\\
        \frac{1}{h}\frac{\partial\ _2\widehat{W}}{\partial t}&=\left(zu^{(2)}+\frac{T^{(2)}}{t}+\frac{P}{t-1}\right)\ _2\widehat{W},
    \end{aligned}
    \right.
    &&
    \left\{
    \begin{aligned}
        \frac{1}{h}\frac{\partial \widehat{F}^{(1)}}{\partial z}&=\left(u^{(1)}+\frac{T^{(1)}}{z}\right)\widehat{F}^{(1)}, \\
        \frac{1}{h}\frac{\partial \widehat{F}^{(1)}}{\partial t}&=0,
    \end{aligned}
    \right.
\end{align}
and are uniquely determined by the forms
\begin{align}
    \label{eq:W2}
    _2\widehat{W}&=\ _2\widehat{K}\cdot e^{hztu^{(2)}}(zt)^{h\delta(T^{(2)}+P)}, \\
    \label{eq:F1}
    \widehat{F}^{(1)}&=\widehat{H}^{(1)}\cdot e^{hzu^{(1)}}z^{h\delta T^{(1)}}.
\end{align}
Here $_2\widehat{K}$ and $\widehat{H}^{(1)}$ are formal power series in $z^{-1}$ with leading terms $(1-\frac{1}{t})^{h\delta P}$ and $\mathrm{Id}$ respectively. More explicitly, we have 
\[\widehat{Q}=\ _2\widehat{K}\widehat{H}^{(1)}\cdot (1-\frac{1}{t})^{-h\delta P}.\]

\item[(c).](Around $t=1$)
$\widehat{Y}(z,t)=\widehat{U}(z,t)\widehat{X}(z,t)$, where $\widehat{U}$ and $\widehat{X}$ satisfy compatible systems
\begin{align}\label{eq:zt-funcUX}
    &\left\{
    \begin{aligned}
        \frac{1}{h}\frac{\partial \widehat{U}}{\partial z}&=\!\left(u^{(1)}\!+\!u^{(2)}\!+\!\frac{T^{(1)}\!+\!T^{(2)}}{z}\right)\widehat{U}\!+\!\left[(t-1)u^{(2)}\!+\!\frac{P}{z},\widehat{U}\right],\\
        \frac{1}{h}\frac{\partial \widehat{U}}{\partial t}&=\!\left(\frac{T^{(2)}}{t}\right)\widehat{U}+\left[zu^{(2)}+\frac{P}{t-1},\widehat{U}\right],
    \end{aligned}
    \right.
    &&
    \left\{
    \begin{aligned}
        \frac{1}{h}\frac{\partial \widehat{X}}{\partial z}&=\!\left((t-1)u^{(2)}\!+\!\frac{P}{z}\right)\widehat{X}, \\
        \frac{1}{h}\frac{\partial \widehat{X}}{\partial t}&=\!\left(zu^{(2)}+\frac{P}{t-1}\right)\widehat{X},
    \end{aligned}
    \right.
\end{align}
and  are uniquely determined by the forms
\begin{align}
    \label{eq:U}
    \widehat{U}&=\widehat{I}\cdot e^{hz(u^{(1)}+u^{(2)})}z^{h\delta(T^{(1)}+T^{(2)})}, \\
    \label{eq:X}
    \widehat{X}&=\widehat{L}\cdot e^{hz(t-1)u^{(2)}}(z(t-1))^{h\delta P}.
\end{align}
Here $\widehat{I}$ and $\widehat{L}$ are formal power series in $z^{-1}$ with leading terms $t^{h\delta T^{(2)}}$ and $\mathrm{Id}$ respectively. More explicitly, we have 
\[\widehat{Q}=\widehat{I}\widehat{L}\cdot t^{-h\delta T^{(2)}}.\]
\end{enumerate}

%% file: final_bigThms/3.2-Proof-Lem-DecompY.tex
(1) We first construct $_1\widehat{W}$ and $\widehat{F}^{(2)}$ in the prescribed form.
Substituting \eqref{eq:W1} and \eqref{eq:F2} into the $z$-equations of \eqref{eq:zt-funcW1F2}, we obtain differential equations for $_1\widehat{K}$ and $\widehat{H}^{(2)}$:
\begin{align}\label{eq:K1-z}
    & \frac{1}{h}\frac{\dd \ _1\widehat{K}}{\dd z}=\left(u^{(1)}+\frac{T^{(1)}+P}{z}\right)\ _1\widehat{K}-\ _1\widehat{K}\left(u^{(1)}+\frac{\delta T^{(1)}+\delta P}{z}\right)+\left[tu^{(2)}+\frac{T^{(2)}}{z},\ _1\widehat{K}\right],\\
    \label{eq:H2-z}
    & \frac{1}{h}\frac{\dd \widehat{H}^{(2)}}{\dd z}=\left(tu^{(2)}+\frac{T^{(2)}}{z}\right)\widehat{H}^{(2)}-\widehat{H}^{(2)}\left(tu^{(2)}+\frac{\delta T^{(2)}}{z}\right).
\end{align}
We look for solutions in the form of formal power series
\[
_1\widehat{K} = \sum_{m\ge 0} K_m z^{-m}, 
\qquad
\widehat{H}^{(2)} = \sum_{m\ge 0} H_m z^{-m},
\]
with leading terms
$
K_0 = (1-t)^{h\delta P}, 
\ 
H_0 = \mathrm{Id}.$

Inserting these expansions into \eqref{eq:K1-z}, \eqref{eq:H2-z} and comparing coefficients of $z^{-m-1}$, we obtain recursive relations which uniquely determine all coefficients $K_m$ and $H_m$. Following an argument similar to that in Proposition \ref{thm:FormalSolY}, we verify $_1\widehat{K}$ and $\widehat{H}^{(2)}$ exist and are uniquely determined. Consequently, $_1\widehat{W}$ and $\widehat{F}^{(2)}$ are well-defined and satisfy both the $z$- and $t$-equations in \eqref{eq:zt-funcW1F2}.

\medskip

We now compare $_1\widehat{W}\widehat{F}^{(2)}$ with $\widehat{Y}$. First, it is checked by Leibniz rule that once $_1\widehat{W}$ and $\widehat{F}^{(2)}$ satisfy \eqref{eq:zt-funcW1F2}, the product $_1\widehat{W}\widehat{F}^{(2)}$ satisfies the same system \eqref{eq:super-irregular-kz-z}–\eqref{eq:super-irregular-kz-t} as $\widehat{Y}$.
\begin{comment}
\rednote{need modification here} We now compare $_1\widehat{W}\widehat{F}^{(2)}$ with $\widehat{Y}$.
First, set
\[
M_1 = u^{(1)}+\frac{T^{(1)}+P}{z},\quad M_2 = t u^{(2)}+\frac{T^{(2)}}{z};
\qquad
N_1 = \frac{P}{t-1},\quad N_2 = z u^{(2)}+\frac{T^{(2)}}{t},
\]
then the product $_1\widehat{W}\widehat{F}^{(2)}$ satisfies the same system \eqref{eq:super-irregular-kz-z}–\eqref{eq:super-irregular-kz-t} as $\widehat{Y}$.
\end{comment}
Second, we examine their asymptotic forms. Using the commutativity of $u^{(1)}$, $\delta(T^{(1)}+P)$ with $u^{(2)}$, $T^{(2)}$, one rewrites
\begin{align*}
_1\widehat{W}\widehat{F}^{(2)}\Bigl(\frac{t-1}{1-t}\Bigr)^{h\delta P}
&=
\left(_1\widehat{K}\widehat{H}^{(2)}(1-t)^{-h\delta P}\right)
\cdot
e^{hz(u^{(1)}+t u^{(2)})}
z^{h\delta(T^{(1)}+T^{(2)}+P)}
t^{h\delta T^{(2)}}
(t-1)^{h\delta P}\\
&=\left(1+O(z^{-1})\right)
\cdot
e^{hz(u^{(1)}+t u^{(2)})}
z^{h\delta(T^{(1)}+T^{(2)}+P)}
t^{h\delta T^{(2)}}
(t-1)^{h\delta P}
\end{align*}
Thus $_1\widehat{W}\widehat{F}^{(2)}\big(\frac{t-1}{1-t}\big)^{h\delta P}$ has the same asymptotic form as $\widehat{Y}$, namely the same formal $z^{-1}$-expansion together with the same exponential factor. By the uniqueness statement of Proposition \ref{thm:FormalSolY}, we conclude that
\[
\widehat{Y}=\ _1\widehat{W}\widehat{F}^{(2)}\cdot\Bigl(\frac{t-1}{1-t}\Bigr)^{h\delta P}.
\]

(2) The second case is completely parallel. We record the key equations for $_2\widehat{K}$ and $\widehat{H}^{(1)}$:
\begin{align}\label{eq:K2-z}
&\frac{1}{h}\frac{\dd \ _2\widehat{K}}{\dd z}
=
\Bigl(tu^{(2)}+\frac{T^{(2)}+P}{z}\Bigr)\ _2\widehat{K}-\ _2\widehat{K}\Bigl(tu^{(2)}+\frac{\delta T^{(2)}+\delta P}{z}\Bigr)
+
\Bigl[u^{(1)}+\frac{T^{(1)}}{z},\ _2\widehat{K}\Bigr],
\\
\label{eq:H1-z}
&\frac{1}{h}\frac{\dd \widehat{H}^{(1)}}{\dd z}
=
\Bigl(u^{(1)}+\frac{T^{(1)}}{z}\Bigr)\widehat{H}^{(1)}-\widehat{H}^{(1)}\Bigl(u^{(1)}+\frac{\delta T^{(1)}}{z}\Bigr).
\end{align}
\begin{comment}
Together with
\[
M_1 = tu^{(2)}+\frac{T^{(2)}+P}{z},\quad
M_2 = u^{(1)}+\frac{T^{(1)}}{z},\quad
N_1 = z u^{(2)}+\frac{T^{(2)}}{t}+\frac{P}{t-1},\quad
N_2 = 0,
\]
\end{comment}
the conclusion follows by the same argument.

\medskip
\noindent
(3) The third case is also parallel. The relevant equations are
\begin{align}\label{eq:I-z}
\frac{1}{h}\frac{\dd \,\widehat{I}}{\dd z}
&=
\Bigl(u^{(1)}+u^{(2)}+\frac{T^{(1)}+T^{(2)}}{z}\Bigr)\widehat{I}
-
\widehat{I}\Bigl(u^{(1)}+u^{(2)}+\frac{\delta T^{(1)}+\delta T^{(2)}}{z}\Bigr)
+
\Bigl[(t-1)u^{(2)}+\frac{P}{z},\,\widehat{I}\Bigr],
\\
\label{eq:L-z}
\frac{1}{h}\frac{\dd \,\widehat{L}}{\dd z}
&=
\Bigl((t-1)u^{(2)}+\frac{P}{z}\Bigr)\widehat{L}
-
\widehat{L}\Bigl((t-1)u^{(2)}+\frac{\delta P}{z}\Bigr).
\end{align}
\begin{comment}
Together with
\[
M_1 = u^{(1)}+u^{(2)}+\frac{T^{(1)}+T^{(2)}}{z},\quad
M_2 = (t-1)u^{(2)}+\frac{P}{z},\quad
N_1 = \frac{T^{(2)}}{t},\quad
N_2 = z u^{(2)}+\frac{P}{t-1},
\]
\end{comment}
the result follows.

%% file: final_sections/section3/Section3.3.tex
\subsection{Holomorphicity of the first factor in the factorization}\label{sec:3.3}

In Section \ref{sec:3.2} we defined the formal solution $\widehat{Y}(z,t)$ of \eqref{eq:super-irregular-kz-z}-\eqref{eq:super-irregular-kz-t} and introduced three factorizations of $\widehat{Y}$. In this section, we prove that the factorizations of $\widehat{Y}$ completely extract the singularity of $\widehat{Y}$ at each pole into the second component. This extraction is realized by assigning each decomposition a local coordinate.

\begin{thm}\label{thm:Y-decomp}
The formal solution $\widehat{Y}(z,t)$ in \eqref{eq:FormalSolY} have the following decompositions, where in each cases we introduce the local coordinate $\sigma_1,\sigma_2$ and $\eta=\sigma_1\sigma_2$. We note that $\sigma_2$ approaches $0$ as $t$ approaches the corresponding poles $0,\infty, 1$ respectively.
\begin{enumerate}
\item[(a).](Around $t=0$) Set
$\sigma_1=z,\sigma_2=t$ and $\eta=zt$, under change of variables holds
\begin{equation*}\label{eq:coord-Y=W1F2}
\widehat{Y}(z,t)\cdot \left(\frac{t-1}{1-t}\right)^{-h\delta P}=
\ _1\widehat{W}(\sigma_1,\sigma_2)\widehat{F}^{(2)}(\eta);
\end{equation*}
where $\ _1\widehat{W}(\sigma_1,\sigma_2)=\ _1\widehat{K}(\sigma_1,\sigma_2)\cdot e^{h\sigma_1u^{(1)}}\sigma_1^{h\delta (T^{(1)}+P)}$ and $\widehat{F}^{(2)}(\eta)=\widehat{H}^{(2)}(\eta)\cdot e^{h\eta u^{(2)}}\eta^{h\delta T^{(2)}}$. Here $\ _1\widehat{K}$ and $\widehat{H}^{(2)}$ are formal power series in $\sigma_1^{-1}$ and $\eta^{-1}$ with leading terms $(1-\sigma_2)^{h\delta P}$ and $\mathrm{Id}$ respectively. 

\smallskip
Moreover, $\widehat{F}^{(2)}(\eta)$ satisfy
\begin{equation}\label{eq:monoF2}
    \frac{\dd \widehat{F}^{(2)}}{\dd \eta}=\left(u^{(2)}+\frac{T^{(2)}}{\eta}\right)\widehat{F}^{(2)}.
\end{equation}

\smallskip
Meanwhile, the coefficients of $\ _1\widehat{K}(\sigma_1,\sigma_2)$ as a function in $\sigma_2$ are holomorphic at $\sigma_2=0$.

\item[(b).](Around $t=\infty$)
Set $\sigma_1=zt,\sigma_2=1/t$ and $\eta=z$, under change of variables holds
\begin{equation*}\label{eq:coord-Y=W2F1}
\widehat{Y}(z,t)=
\ _2\widehat{W}(\sigma_1,\sigma_2)\widehat{F}^{(1)}(\eta);
\end{equation*}
where $\ _2\widehat{W}(\sigma_1,\sigma_2)=\ _2\widehat{K}(\sigma_1,\sigma_2)\cdot e^{h\sigma_1u^{(2)}}\sigma_1^{h\delta (T^{(2)}+P)}$ and $\widehat{F}^{(1)}(\eta)=\widehat{H}^{(1)}(\eta)\cdot e^{h\eta u^{(1)}}\eta^{h\delta T^{(1)}}$. Here $\ _2\widehat{K}$ and $\widehat{H}^{(1)}$ are formal power series in $\sigma_1^{-1}$ and $\eta^{-1}$ with leading terms $(1-\sigma_2)^{h\delta P}$ and $\mathrm{Id}$ respectively.

\smallskip
Moreover, $\widehat{F}^{(1)}(\eta)$ satisfy
\begin{equation}\label{eq:monoF1}
    \frac{\dd \widehat{F}^{(1)}}{\dd \eta}=\left(u^{(1)}+\frac{T^{(1)}}{\eta}\right)\widehat{F}^{(1)}.
\end{equation}

\smallskip
Meanwhile, the coefficients of $\ _2\widehat{K}(\sigma_1,\sigma_2)$ as a function in $\sigma_2$ are holomorphic at $\sigma_2=0$.

\item[(c).](Around $t=1$)
Set $\sigma_1=z,\sigma_2=t-1$ and $\eta=zt-z$, under change of variables holds
\begin{equation*}\label{eq:coord-Y=UX}
\widehat{Y}(z,t)=
\widehat{U}(\sigma_1,\sigma_2)\widehat{X}(\eta);
\end{equation*}
where $\widehat{U}(\sigma_1,\sigma_2)=\widehat{I}(\sigma_1,\sigma_2)\cdot e^{h\sigma_1(u^{(1)}+u^{(2)})}\sigma_1^{h\delta (T^{(1)}+T^{(2)})}$ and $\widehat{X}(\eta)=\widehat{L}(\eta)\cdot e^{h\eta u^{(2)}}\eta^{h\delta P}$. Here $\widehat{U}$ and $\widehat{X}$ are formal power series in $\sigma_1^{-1}$ and $\eta^{-1}$ with leading terms $(1+\sigma_2)^{h\delta T^{(2)}}$ and $\mathrm{Id}$ respectively.

\smallskip
Moreover, $\widehat{X}(\eta)$ satisfy
\begin{equation}\label{eq:monoX}
    \frac{\dd \widehat{X}}{\dd \eta}=\left(u^{(2)}+\frac{P}{\eta}\right)\widehat{X}.
\end{equation}

\smallskip
Meanwhile, the coefficients of $\widehat{I}(\sigma_1,\sigma_2)$ as a function in $\sigma_2$ are holomorphic at $\sigma_2=0$.

\end{enumerate}
\end{thm}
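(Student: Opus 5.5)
The plan is to treat the three cases in parallel and to work from the factorizations of Proposition~\ref{lem:DecompY}. Each case has three parts: rewriting the factorization in the local coordinates, deriving the ODE for the second factor, and proving holomorphicity of the coefficients of the first factor at $\sigma_2=0$. We describe case (a) in detail; cases (b) and (c) follow the same pattern.

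\textbf{Rewriting in local coordinates and the second factor.} For case (a), take $\sigma_1=z$, $\sigma_2=t$, $\eta=zt$. The formula \eqref{eq:F2} then reads
\[
\widehat F^{(2)}=\widehat H^{(2)}\,e^{h\eta u^{(2)}}\eta^{h\delta T^{(2)}},
\]
because $z^{h\delta T^{(2)}}t^{h\delta T^{(2)}}=(zt)^{h\delta T^{(2)}}$. The $z$-equation \eqref{eq:H2-z} for $\widehat H^{(2)}$ involves $z$ and $t$ only through $zt$. Indeed, the recursion for $H_m$ gives $H_m\propto t^{-m}$, so $\widehat H^{(2)}$ is a series in $\eta^{-1}$. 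The chain rule applied to \eqref{eq:zt-funcW1F2} then gives $\partial_{\sigma_1}\widehat F^{(2)}=0$ together with \eqref{eq:monoF2}. Concretely, $z\partial_z-t\partial_t$ annihilates $\widehat F^{(2)}$, and $\partial_\eta=\frac1t\partial_z$ reproduces the $z$-equation.

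Cases (b) and (c) are handled the same way.
\begin{itemize}
\item In (b), $\widehat F^{(1)}$ is already independent of $t$, and ${}_2\widehat W$ is rewritten using $(zt)^{h\delta(T^{(2)}+P)}=\sigma_1^{h\delta(T^{(2)}+P)}$ and $e^{hztu^{(2)}}=e^{h\sigma_1u^{(2)}}$.
\item In (c), $\widehat X$ depends only on $z(t-1)$, by the same scaling argument applied to \eqref{eq:L-z}. The leading term $t^{h\delta T^{(2)}}$ becomes $(1+\sigma_2)^{h\delta T^{(2)}}$.
\end{itemize}

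\textbf{Holomorphicity of the first factor.} This is the main obstacle. In case (a), I would substitute ${}_1\widehat K=\sum K_m(t)\sigma_1^{-m}$ into \eqref{eq:K1-z} and read off the recursion
\[
[K_{m+1},u^{(1)}]+t[K_{m+1},u^{(2)}]=\Big(\tfrac{m}{h}+T^{(1)}+P\Big)K_m-K_m(\delta T^{(1)}+\delta P)+[T^{(2)},K_m],
\]
where the only division by a $t$-dependent quantity is through the eigenvalues $(u_j-u_i)+t(u_l-u_k)$ of $\mathrm{ad}$ on the $(ijkl)$ component.

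The argument is an induction on $m$, splitting the components into three types.
\begin{itemize}
\item When $i\neq j$, the coefficient $(u_j-u_i)+t(u_l-u_k)$ is nonzero at $t=0$, so dividing by it preserves holomorphicity near $t=0$.
\item When $i=j$ but $k\neq l$, the coefficient is $t(u_l-u_k)$, which vanishes at $t=0$. Here I would not use the $z$-recursion. Instead I would use the $t$-equation in \eqref{eq:zt-funcW1F2}: its term $[zu^{(2)},\cdot]$ determines these components without dividing by $t$, or equivalently one checks that the right-hand side component vanishes to order $t$.
\item The fully diagonal components $i=j$, $k=l$ are solved, as in Step~2 of Proposition~\ref{thm:FormalSolY}, by inverting $\mathrm{ad}_{(-1)^{\bar i}e_{ii}}+\frac{m+1}{h}$ plus the $T^{(2)}$-part. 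This operator is $t$-independent and invertible because $h\notin\mathbb Q$.
\end{itemize}

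An alternative route for the delicate $i=j$, $k\neq l$ block is to use the identity $\widehat Q={}_1\widehat K\widehat H^{(2)}(1-t)^{-h\delta P}$. This gives ${}_1\widehat K=\widehat Q(1-t)^{h\delta P}(\widehat H^{(2)})^{-1}$. The factor $\widehat Q$ has poles at $t=0$ only of bounded order in each $z^{-m}$ coefficient. Those poles must cancel against $(\widehat H^{(2)})^{-1}$, whose coefficients are powers of $(zt)^{-1}$; otherwise the $t$-equation for ${}_1\widehat W$, whose only singularity is at $t=1$ plus the $\mathrm{ad}$ terms, would be violated.

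I would try the direct recursion first and fall back on this pole-cancellation argument if the bookkeeping fails. Cases (b) and (c) follow identically, with $1/t$ and $t-1$ respectively, and with the relevant regular coefficients $(u_l-u_k)$, respectively $(u_j-u_i)+(u_l-u_k)$, at $\sigma_2=0$.
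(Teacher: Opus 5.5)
Your rewriting in local coordinates, your treatment of the components with $i\neq j$, and your inversion for the fully diagonal components $(ii,kk)$ are all fine. You also correctly isolate the block $(ii,kl)$ with $k\neq l$ as the crux. However, your argument for that block does not work. At order $z^{-m}$ the $t$-equation reads
\[
[u^{(2)},K_{m+1}]=\frac1h\,\partial_tK_m-\frac{P}{t-1}K_m-\frac1t\,[T^{(2)},K_m].
\]
So solving it for $K_{m+1,iikl}$ still produces the factor $\frac1t[T^{(2)},K_m]$, which your proposal ignores. Comparing with the $z$-recursion, the obstruction to holomorphy of $K_{m+1,iikl}$ is exactly $[T^{(2)},K_m(0)]_{iikl}$. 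This equals the value at $t=0$ of the $(ii,kl)$-component of the right-hand side you would divide by $t(u_l-u_k)$. An induction that carries only holomorphy gives no control over this quantity.

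The pole-cancellation fallback fails for the same reason. The $t$-equation for ${}_1\widehat W$ has a genuine singularity at $t=0$ through $\frac1t\,\mathrm{ad}_{T^{(2)}}$. A pole in $K_m$ is compatible with it as long as $K_{m+1}$ has a pole of higher order. That is precisely how the coefficients of $\widehat H^{(2)}$ behave, growing like $t^{-m}$, so no contradiction arises.

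The missing idea is control of the value at $t=0$. The paper sets $t=0$ in the $z$-recursion to obtain \eqref{eq:K-rec-0}, and shows this reduced recursion has a unique solution $K'_r$. It then shows (Lemma~\ref{lem:K-at-zero}) that every $K'_r$ commutes with $T^{(2)}$, because $T^{(2)}$ commutes with $u^{(1)}$, $T^{(1)}+P$ and $\delta T^{(1)}+\delta P$. The proof is a joint induction on two statements: $(\mathscr H)_r$, that $K_r$ is holomorphic at $t=0$, and $(\mathscr L)_r$, that $K_r(0)=K'_r$.

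The second statement is what makes the $(ii,kl)$-components of the right-hand side vanish at $t=0$ at the next step. It is propagated by checking that $K_{r+1}(0)$ satisfies the defining relations of $K'_{r+1}$, using the $t$-equation multiplied by $t$ (see \eqref{eq:Limit-expansion2}--\eqref{eq:Limit-expansion3}). Equivalently, you could carry $[T^{(2)},K_m(0)]=0$ as a second inductive hypothesis. You would then re-prove it at each step from uniqueness of the $t=0$ equations together with those commutation relations.

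Cases (b) and (c) need the same ingredient. In (b), use that $T^{(1)}$ commutes with $u^{(2)}$, $T^{(2)}+P$ and $\delta(T^{(2)}+P)$. In (c), use that $P$ commutes with $u^{(1)}+u^{(2)}$, $T^{(1)}+T^{(2)}$ and $\delta(T^{(1)}+T^{(2)})$. They do not follow ``identically'' from the argument you gave.
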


\begin{proof}
Only the statements of holomorphicity are need to be proved, the rest reads directly from Proposition \ref{lem:DecompY}. We give the full proof for $_1\widehat{K}$, then as we will show, the remaining cases (b), (c) are very much the same.

For case $_1\widehat{K}$, $(\sigma_1,\sigma_2)=(z,t)$. Hence the $\sigma_1-$equation for $_1\widehat{K}$ in this case is exactly \eqref{eq:K1-z}, and the $\sigma_2-$equation for $_1\widehat{K}$ is obtained from inserting the expression \eqref{eq:W1} into \eqref{eq:zt-funcW1F2}:
\begin{equation}\label{eq:K1-t}
    \frac{1}{h} \frac{\partial\ _1\widehat{K}}{\partial t} = \left(\frac{P}{t-1}\right)\ _1\widehat{K} + \left[zu^{(2)}+\frac{T^{(2)}}{t},\ _1\widehat{K}\right].
\end{equation}
Now substituting $_1\widehat{K}(z,t)=\sum_{r\ge 0}K_r(t)\,z^{-r}$ gives rise to the $r$-recursion for $z$:
\begin{equation}\label{eq:K-rec-z}
\big[K_{r+1}(t),\;u^{(1)}+t u^{(2)}\big]
= \Big(\frac{r}{h}+T^{(1)}+T^{(2)}+P\Big)K_r(t)
  -K_r(t)\big(\delta T^{(1)}+T^{(2)}+\delta P\big),
\end{equation}
and for $t$:
\begin{equation}\label{eq:K-rec-t}
\frac{1}{h}\frac{\partial K_r}{\partial t}
= \frac{P}{t-1}\cdot K_r
+\Big[\frac{T^{(2)}}{t},\,K_r\Big]
+\big[u^{(2)},\,K_{r+1}\big].
\end{equation}
We write $K_r=\sum_{i,j,k,l}K_r^{ijkl}\otimes E_{ij}\otimes E_{kl}$
throughout. The proof proceeds in two parts: we first analyze the
recursion at $t=0$, and then prove holomorphicity by induction on $r$.

\medskip
\noindent\textbf{The recursion at $t=0$.}
Setting $t=0$ in \eqref{eq:K-rec-z} with $K_0'=\mathrm{Id}$ gives
\begin{equation}\label{eq:K-rec-0}
\big[K_{r+1}',\;u^{(1)}\big]
= \Big(\frac{r}{h}+T^{(1)}+T^{(2)}+P\Big)K_r'
  -K_r'\big(\delta T^{(1)}+T^{(2)}+\delta P\big).
\end{equation}

\begin{lem}\label{lem:K-at-zero}
The recursion \eqref{eq:K-rec-0} determines a unique sequence $\{K_r'\}_{r\ge 0}$, and each $K_r'$ commutes with $T^{(2)}$.
\end{lem}

\begin{proof}
The proof is similar to Proposition \ref{prop:formal-solution-sec2} and \ref{thm:FormalSolY}. Suppose $K_0',\cdots K_r'$ are determined, we consider $K_{r+1}'$. For $i\neq j$, the entries $K_{r+1,ijkl}$ are determined by reading the $(ijkl)-$entry on both sides of~\eqref{eq:K-rec-0}, which gives 
\begin{equation}\label{eq:condition-ijkl}
(u_j-u_i)K'_{r+1,ijkl}=\left[\Big(\frac{r}{h}+T^{(1)}+T^{(2)}+P\Big)K_r'
  -K_r'\big(\delta T^{(1)}+T^{(2)}+\delta P\big)\right]_{ijkl},\quad \forall i\neq j,\,k,l.
\end{equation}
As for $i=j$, the entries $K_{r+1,iikl}$ is determined by replacing $r$ by $r+1$ on the right hand side of \eqref{eq:K-rec-0}, taking the $(iikl)-$ entries on both sides and solving the identities
\begin{equation}\label{eq:condition-iikl}
    \left[\Big(\frac{r}{h}+T^{(1)}+T^{(2)}+P\Big)K_{r+1}'
  -K_{r+1}'\big(\delta T^{(1)}+T^{(2)}+\delta P\big)\right]_{iikl}=0,\quad \forall i,k,l.
\end{equation}
Actually, \eqref{eq:condition-iikl} is equivalent to the condition of  $X_{r+1}^i:=\sum_{k,l}K'_{r+1,iikl}\otimes E_{kl}$ satisfying the equation
\begin{equation}\label{eq:determine-iikl}
\left[\frac{r+1}{h}\,\mathrm{Id}
+\mathrm{ad}_{e_{ii}\otimes 1}
-\mathrm{ad}_{1\otimes E_{ii}}+\mathrm{ad}_{\sum_{a,b}e_{ab}\otimes E_{ab}}\right]X_{r+1}^i
=
\mathsf{R}_{r+1}^i,
\end{equation}
where $\mathsf{R}_{r+1}^i$ is an expression containing only $K_{r+1,ijkl}$ with $i\neq j$, hence is already determined. Since $h\notin\mathbb{Q}$ and $V$ is finite-dimensional, the operator on the left hand side of \eqref{eq:determine-iikl} has no eigenvalue in $\mathbb{Z}$, meaning it is invertible. Thus $X_{r+1}^i$ is uniquely determined.

For the commutativity of $K_r'$ and $T^{(2)}$, insert $t=0$ into the $z-$equation of \eqref{eq:zt-funcW1F2}, one derives the equation for $W'=\sum_{r\geq 0}K_r'\cdot z^{-r}$ to be 
\begin{equation*}
\frac{1}{h} \frac{\partial W'}{\partial z} = \left(u^{(1)}+\frac{T^{(1)}+P}{z}\right)W' + \left[\frac{T^{(2)}}{z},W'\right].
\end{equation*}
Since $u^{(1)}$ and $T^{(1)}+P$ all commute with $T^{(2)}$, formal gauge transformation of $\exp(hT^{(2)}\ln z)$ can be applied. Furthermore, $\delta(T^{(1)}+P)$ also commutes with $T^{(2)}$; this leads to $W'$ and $K'$ all commute with $T^{(2)}$.
\end{proof}

\medskip
\noindent\textbf{Proof of Theorem \ref{thm:Y-decomp}.}

We prove by induction on $r$ the following two propositions.
\begin{equation*}
    (\mathscr{H})_r: \, K_r(t) \text{ is holomorphic at } t=0;\qquad (\mathscr{L})_r: \, \lim_{t\to 0}K_r(t)=K_r'.
\end{equation*}
where $\{K_r'\}_{r\geq 0}$ is the sequence defined from Lemma \ref{lem:K-at-zero}. The base case $r=0$ is clear since $K_0(t)=(1-t)^{h\delta P}$ is holomorphic at $t=0$ with $\lim_{t\to 0}K_0(t)=\mathrm{Id}=K_0'$. Assume $(\mathscr{H})_r$ and $(\mathscr{L})_r$ hold for $1,\cdots r$.

\smallskip
\noindent\textit{Holomorphy $(\mathscr{H})_{r+1}$.}
Expand near $t=0$ that $K_{r+1}(t)=C_{-1}/t+C_0+O(t)$, insert into \eqref{eq:K-rec-z} and take $t=0$, by $(\mathscr{H})_r$ and $(\mathscr{L})_r$ one derives
\begin{subequations}\label{eq:Holomorphic-expansion}
\begin{align}\label{eq:expansion1}
    [C_{-1},u^{(1)}]&=0;\\
    \label{eq:expansion2}
    [C_{-1},u^{(2)}]+[C_0, u^{(1)}]&=\Big(\frac{r}{h}+T^{(1)}+T^{(2)}+P\Big)K_r'
  -K_r'\big(\delta T^{(1)}+T^{(2)}+\delta P\big)
  =[K_{r+1},u^{(1)}].
\end{align}
\end{subequations}
Now \eqref{eq:expansion1} gives $C_{-1,ijkl}=0$ for $i\neq j$, \eqref{eq:expansion2} gives $C_{-1,iikl}=0$ for $k\neq l$, while similarly $C_{-1,iikk}$ is determined by an invertible linear operator applied to the entries $C_{-1,ijkl}$ and $C_{-1,iikl}$. Hence, $C_{-1}=0$ and $(\mathscr{H})_{r+1}$ is proved.

\smallskip
\noindent\textit{Limit $(\mathscr{L})_{r+1}$.}
We want to prove $C_0=K_{r+1}'$. Insert $C_{-1}=0$ into \eqref{eq:expansion2} we have 
\begin{equation}\label{eq:Limit-expansion1}
    [C_0, u^{(1)}]=[K_{r+1},u^{(1)}]=\Big(\frac{r}{h}+T^{(1)}+T^{(2)}+P\Big)K_r'
  -K_r'\big(\delta T^{(1)}+T^{(2)}+\delta P\big).
\end{equation}
On the other hand, multiply both sides of \eqref{eq:K-rec-t} by $t$ and insert it into \eqref{eq:K-rec-z} at level $r+1$, by eliminating $[K_{r+2}(t), u^{(2)}]$ term one derives
\begin{equation}\label{eq:Limit-expansion2}
    [K_{r+2}(t),u^{(1)}]=\Big(\frac{r}{h}+T^{(1)}+P\Big)K_{r+1}(t)
  -K_{r+1}(t)\big(\delta T^{(1)}+\delta P\big)+t\Big(\frac{1}{h}\frac{\partial K_{r+1}}{\partial t}-\frac{tP}{t-1}\cdot K_{r+1}\Big).
\end{equation}
Having established $(\mathscr{H})_{r+1}$, letting $t=0$ and taking the $(iikl)-$entries on both sides of \eqref{eq:Limit-expansion2} leads to
\begin{equation}\label{eq:Limit-expansion3}
    \left[\Big(\frac{r}{h}+T^{(1)}+P\Big)C_0
  -C_0\big(\delta T^{(1)}+\delta P\big)\right]_{iikl}=0,\quad \forall i,k,l.
\end{equation}
Eventually, by the commutativity of $K_r'$ and $T^{(2)}$ proved in Lemma \ref{lem:K-at-zero}, the equations \eqref{eq:Limit-expansion1}-\eqref{eq:Limit-expansion3} satisfied by $C_0$ are exactly the defining relations \eqref{eq:condition-ijkl}-\eqref{eq:condition-iikl} of $K_{r+1}'$. This proves $(\mathscr{L})_{r+1}$ and finishes the induction for case (1). 

For the case $_2\widehat{K}$, substituting \eqref{eq:W2} into \eqref{eq:zt-funcW2F1}, the equations for $\sigma_1=zt, \,\sigma_2=z$ becomes
\begin{align*}
    & \frac{1}{h}\frac{\partial\ _2\widehat{K}}{\partial\sigma_1}=\left(u^{(2)}+\frac{T^{(2)}+P}{\sigma_1}\right)\ _2\widehat{K}-\ _2\widehat{K}\left(u^{(2)}+\frac{\delta T^{(2)}+\delta P}{\sigma_1}\right)+\left[\sigma_2u^{(1)}+\frac{T^{(1)}}{\sigma_1},\ _2\widehat{K}\right],\\
    & \frac{1}{h} \frac{\partial\ _2\widehat{K}}{\partial \sigma_2} = \left(\frac{P}{\sigma_2-1}\right)\ _2\widehat{K} + \left[\sigma_1u^{(1)}+\frac{T^{(1)}}{\sigma_2},\ _2\widehat{K}\right],
\end{align*}
which is the dual of \eqref{eq:K1-z}-\eqref{eq:K1-t}. Thus the dual proof applies. For the case $\widehat{I}$, the equations for $\sigma_1\!=\!z,\,\sigma_2\!=\!t\!-\!1$ are
\begin{align*}
    & \frac{1}{h}\frac{\partial\ \widehat{I}}{\partial\sigma_1}\!=\!\left(u^{(1)}\!+\!u^{(2)}\!+\!\frac{T^{(1)}\!+\!T^{(2)}}{\sigma_1}\right)\widehat{I}\!-\!\widehat{I}\left(u^{(1)}\!+\!u^{(2)}\!+\!\frac{\delta (T^{(1)}\!+\!T^{(2)})}{\sigma_1}\right)\!+\!\left[\sigma_2u^{(2)}\!+\!\frac{P}{\sigma_1},\ \widehat{I}\right],\\
    & \frac{1}{h} \frac{\partial\ \widehat{I}}{\partial \sigma_2} = \left(\frac{T^{(2)}}{\sigma_2+1}\right)\widehat{I} + \left[\sigma_1u^{(2)}+\frac{P}{\sigma_2},\widehat{I}\right],
\end{align*}
and the proof is parallel using the corresponding commutativity $\big[P,T^{(1)}+T^{(2)}\big]=\big[P,\delta (T^{(1)}+T^{(2)})\big]=0.$
\end{proof}
%We conclude by summarizing the coordinate regimes in Table \ref{pic:DecompCoord}, where $\sigma_1,\sigma_2$ are the variables of the first-factor function and $\eta$ is the variable on which the second factor depends.

%\input{final_pictures/tab_DecompCoord}

%% file: final_sections/section3/Section3.4.tex
\subsection{Isomonodromy arguments and resummation on different regions}
\label{sec:3.4}

In the rest of the paper, we always take $u\in\mathfrak{h}_{\mathrm{reg}}$ and $h\notin \mathbb{Q}$, denote $\mathbb{C}^*=\mathbb{C}\setminus\{0\}$ and $\mathbb{C}^{\star}=\mathbb{C}\setminus\{0,1\}$.
The formal decompositions of Proposition~\ref{lem:DecompY} are identities between formal power series in~\(z^{-1}\). In this subsection we lift them to identities between holomorphic solutions obtained by Borel-Laplace summation. To this end, we examine the anti-Stokes directions with respect to $z$ within the framework of $t-$isomonodromic deformations.

% ----------------------------------------------------------------
\subsubsection{Resummation of
\texorpdfstring{$\widehat Y(z,t)$}{Y-hat} with respect to $z$}
\label{sec:4.4.Y}

For any  \(t\in\mathbb{C}\setminus\{0,1\}\), the leading irregular
term of~(\ref{eq:super-irregular-kz-z}) is
\[u^{(1)}+tu^{(2)}
=\sum_{i,k}(u_i+tu_k)\,1\otimes E_{ii}\otimes E_{kk},\]
with eigenvalues \(\lambda_{(i,k)}(t)=u_i+tu_k\). As in Definition \ref{def:anti-stokes-sec2}, the anti-Stokes lines of ~(\ref{eq:super-irregular-kz-z}) is denoted by $ {\rm aSR}(hu^{(1)}+htu^{(2)})$.
Let 
\[\mathcal{R}_{\infty}
=\{-\mathrm{Arg}(h(u_i-u_j))
+2m\pi:i\neq j\}.\]
For given fixed $u\in\h_{\rm reg}$ and $d\notin \mathcal{R}_{\infty}$, we introduce two sets of curves on the $t$-plane:
\begin{align*}
\mathcal{R}_0(d)&=\left\{t:{\rm arg}(t)+d=-{\rm Arg}\left(h(u_i-u_j)\right)+{2k\pi}: k\in \mathbb{Z}, i\neq j\right\},\\
\mathcal{R}_1(d) &= \left\{t:{\rm arg}(1-t)+d=-{\rm Arg}\left(h(u_i-u_j)\right)+{2k\pi}: k\in \mathbb{Z}, i\neq j\right\}. 
\end{align*}
Then $\mathcal{R}_0(d)\cup \mathcal{R}_1(d)$ cut the universal covering space $\widetilde{\mathbb{C}^{\star}}$ of $\mathbb{C}^{\star}$ into connected components, called $t-$chambers. Particularly, each of the intervals $(-\infty,0)$, $(0,1)$ and $(1,+\infty)$ are included a chamber. However, to uniquely specify their lifts to the covering space, one must fix the arguments $\arg t$ and $\arg (t-1)$. See Definition \ref{def:Six-solutions}.

\begin{thm}\label{thm:Y-on-chamber}
Given $d\notin \mathcal{R}_{\infty}$, for every \(t\)-chamber~\(D\subset \widetilde{\mathbb{C}^{\star}}\), there exists a unique holomorphic solution \(Y_{d,D}(z,t)\) of~(\ref{eq:super-irregular-kz-z})-(\ref{eq:super-irregular-kz-t}) on \(\widetilde{\mathbb{C}^{\ast}}\times D\), with the prescribed asymptotic behavior
\begin{equation*}
    Y_{d,D}(z,t)\cdot e^{-hz(u^{(1)}+tu^{(2)})}z^{-h\delta T^{(1)}} (zt)^{-h\delta T^{(2)}}(zt-z)^{-h\delta P} \sim \widehat{Q}(z;t) 
\end{equation*}
as \(z\to\infty\) in \(\arg(z)\in(d-\frac\pi 2,d+\frac\pi 2)\), uniformly on compact subsets
of~\(D\).
\end{thm}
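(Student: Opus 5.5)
The plan is to first fix $t$, resum $\widehat Y$ in $z$ alone, and then show that the resulting family is holomorphic in $t$, has no jumps inside a chamber, and satisfies the $t$-equation \eqref{eq:super-irregular-kz-t}.

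\textbf{Step 1: resummation at fixed $t$.} Take $t\in D$ such that the eigenvalues $u_i+tu_k$ of $u^{(1)}+tu^{(2)}$ are pairwise distinct, i.e.\ $t$ in $D\cap D_u$. Equation \eqref{eq:super-irregular-kz-z} is then of the same type as \eqref{eq:MonoSingularSys}, with leading term $u^{(1)}+tu^{(2)}$ and residue $T^{(1)}+T^{(2)}+P$. The formal solution $\widehat Y(z,t)$ of Proposition~\ref{thm:FormalSolY} is Gevrey-$1$ in $z^{-1}$ by the recursion \eqref{eq:Qm}. Borel--Laplace summation in the Banach algebra $\End(V)\otimes\End(\mathbb C^{m|n})^{\otimes2}$, exactly as in Proposition~\ref{prop:canonical-solution-sec2}, then produces an actual solution $Y_{d,D}(z,t)$ with the required asymptotics in $\arg z\in(d-\frac\pi2,d+\frac\pi2)$. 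This works provided $d$ is not an anti-Stokes direction in ${\rm aSR}(hu^{(1)}+htu^{(2)})$. The dependence on $t$ is holomorphic, because the Borel transform $\mathcal B(\widehat Q)(\xi;t)$ depends holomorphically on $t$: its coefficients $Q_r(t)$ are rational in $t$ by Proposition~\ref{thm:FormalSolY}. The Laplace integral is locally uniform in $t$ as long as no singularity $\xi=h(\lambda_{(j,l)}(t)-\lambda_{(i,k)}(t))$ of the Borel transform crosses the ray of integration.

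\textbf{Step 2: isomonodromy removes the spurious Stokes rays.} This is the main obstacle. The anti-Stokes directions of \eqref{eq:super-irregular-kz-z} come from the differences $(u_i-u_j)+t(u_k-u_l)$. Only those of the form $u_i-u_j$ (with $k=l$), $t(u_k-u_l)$ (with $i=j$), and $(1-t)(u_i-u_j)$ (the $P$-coupled pairs $(i,k)\leftrightarrow(k,i)$) are recorded in $\mathcal R_\infty$, $\mathcal R_0(d)$ and $\mathcal R_1(d)$. The other differences give apparent Stokes rays as $t$ varies. I would show that the Borel transform has no singularity at these points. I would try first to argue via the $t$-equation \eqref{eq:super-irregular-kz-t}. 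Its Borel transform $\mathcal B(\widehat Q)$ satisfies a holomorphic linear PDE in $(\xi,t)$. Singular loci of the Borel transform propagate along characteristics, and the coefficient structure of $T^{(2)}/t+P/(t-1)$ only couples the $(i,k)$-blocks in the three patterns above. Hence the only singular loci are $\xi=h(u_i-u_j)$, $\xi=ht(u_k-u_l)$ and $\xi=h(1-t)(u_i-u_j)$.

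An alternative route uses Theorem~\ref{thm:Y-decomp}. Near each of $t=0,\infty,1$ the factorization writes $\widehat Y$ as a product of a factor whose coefficients are holomorphic at $\sigma_2=0$ and a single-variable factor in $\eta$. Each factor has Stokes data only along the corresponding rays. The Stokes matrix of \eqref{eq:super-irregular-kz-z} across a spurious ray is then locally constant in $t$ by isomonodromy, and it equals $I$ in a limit regime ($t\to0$, $\infty$ or $1$) where the ray is absent. Hence it is $I$ throughout the chamber.

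\textbf{Step 3: gluing, the $t$-equation, and uniqueness.} Once Step~2 holds, the solutions from Step~1 agree across the discrete set $D\setminus D_u$ and across spurious rays. This gives a single holomorphic function on $\widetilde{\mathbb C^*}\times D$, with asymptotics uniform on compacts. For the $t$-equation, set $G=\frac1h\partial_tY_{d,D}-(zu^{(2)}+T^{(2)}/t+P/(t-1))Y_{d,D}$. By the compatibility in Proposition~\ref{pro:general-super-irregular-kz-flat}, $G$ solves \eqref{eq:super-irregular-kz-z}. Its asymptotic expansion is the series $G_0(t)+\dots$ computed in Step~3 of the proof of Proposition~\ref{thm:FormalSolY}, and that series vanishes identically. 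Watson's lemma, applied as in Proposition~\ref{prop:canonical-solution-sec2} on a sector of opening exceeding $\pi$, then gives $G\equiv0$. The same Watson argument gives uniqueness of $Y_{d,D}$.
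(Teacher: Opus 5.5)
\textbf{There is a genuine gap in your Step~2, which is the whole content of the theorem.} You never exhibit a value of $t$ at which the Stokes factor across a spurious ray is known to be trivial.

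Take a spurious pair $(i,k),(j,l)$, with $i\neq j$, $k\neq l$, and not the swap $j=k$, $l=i$. The set $\Sigma$ of $t$ at which its anti-Stokes direction equals $d$ is a half-line emanating from the coalescence point $t_0=(u_j-u_i)/(u_k-u_l)$. In none of your limit regimes is this ray absent:
\begin{itemize}
\item as $t\to0$ the exponent difference tends to $u_i-u_j\neq0$, and the ray merges with $\mathcal R_\infty$;
\item as $t\to1$ it tends to $u_i+u_k-u_j-u_l\neq0$;
\item as $t\to\infty$ along $\Sigma$ it merges with a ray of $\mathcal R^0(t)$.
\end{itemize}

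Accordingly, Theorem~\ref{thm:Y-decomp} and Proposition~\ref{pro:W-on-chamber} do not show that ${}_1\widehat W$, ${}_2\widehat W$, $\widehat U$ are free of spurious Stokes data; they only control $t$ in small neighbourhoods of $0,\infty,1$. Near $t=0$, for instance, the spurious rays of ${}_1\widehat W$ cluster around $\mathcal R_\infty$. At $\sigma_2=0$ they collapse into a single Stokes factor, which does not determine the individual factors once the cluster splits. Carrying a multiplier from such a regime over to $\Sigma$ by local constancy also breaks down, since rays cross one another on the way.

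Route~A has the same defect. Products of the coefficients couple arbitrary blocks: already $T^{(1)}T^{(2)}$ in the $z$-equation, or $T^{(2)}P$ in the $t$-equation. Moreover, every candidate locus $\xi=h(\lambda_b(t)-\lambda_c(t))$, spurious or not, is a characteristic of the Borel-transformed $t$-equation for the entry $(c,b)$. Propagation of singularities tells you where a singularity can travel, not whether it is present.

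\textbf{The missing idea, which is what the paper uses, is the endpoint $t_0$ of $\Sigma$.}
\begin{enumerate}
\item At $t_0$ we have $\lambda_{(i,k)}(t_0)=\lambda_{(j,l)}(t_0)$. Yet by Proposition~\ref{thm:FormalSolY} the $Q_r(t)$ have poles only at $0,1,\infty$, so $\widehat Q$ is holomorphic at $t_0$.
\item After perturbing $u$ into general position, $d$ is not an anti-Stokes direction at $t_0$. So the resummations along directions $d_\pm$ close to $d$, on either side of it, agree at $t_0$.
\item Both resummations stay defined and depend continuously on $t$ along a path in $\Sigma\cup\{t_0\}$ on which $d_\pm$ remain admissible, and both solve the same linear system. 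Hence they agree for $t\in\Sigma$ near $t_0$, i.e.\ the Stokes factor $\mathcal K_d(t)$ is $I$ on the first segment of $\Sigma$.
\item One then propagates along $\Sigma$. $\mathcal K_d$ is constant between consecutive points where the ray $d'(t)$ of another pair crosses $d$. At such a crossing, the relation $\mathcal K_d(t')\mathcal K_{d'(t')}(t')=\mathcal K_{d'(t'')}(t'')\mathcal K_d(t'')$ (or its reversed version) involves unipotent elementary factors supported on different index quadruples. So $\mathcal K_d(t')=I$ forces $\mathcal K_d(t'')=I$.
\end{enumerate}

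Note that your Step~1 discards exactly the points $t\notin D_u$, and your Step~3 proposes to glue across them. They are not a nuisance: they are the only place where the triviality of the spurious Stokes factors can be read off. With Step~2 supplied this way, your Steps~1 and~3 are fine: resummation at fixed $t$, recovery of the $t$-equation via compatibility and Watson's lemma, and uniqueness.
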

\begin{proof}
For the given $d\notin \mathcal{R}_{\infty}$, and a point \[t_0=\frac{u_i-u_j}{u_l-u_k}\ne 0, 1, \infty, \ \text{for some fixed indices $i,j,k,l$},\] let us consider the curve determined by (for a fixed integer $m$)
\[ d=-\mathrm{Arg}(hu_i-hu_j+t(hu_k-hu_l))+{2m\pi}.\]
Denote by $\Sigma$  the branch of the curve containing $t_0$ as a limit point. Then the branch $\Sigma$ is not contained in the set $\mathcal{R}(hu;d)_0\cup \mathcal{R}(hu;d)_1$ of curves. 

Since the goal is to show that the solutions defined from different regions of a chamber coincide, it suffices to assume that $u_1,...,u_n$ in a generic position. The general case then follows immediately by a small perturbation of $u$, thanks to the holomorphic dependence on parameters.  Let us assume that for any integer $m'$ and any ordered indices $(a,b,c,d)\ne(i,j,k,l)$
\[d\ne -\mathrm{Arg}(hu_a-hu_b+t_0(hu_c-hu_d))+{2m'\pi}.\]
Then, since $hu_i-hu_j+t_0(hu_k-hu_l)=0$ (the corresponding eigenvalues $hu_i+t_0hu_k$ and $hu_j+t_0hu_l$ of the irregular term $hu^{(1)}+t_0hu^{(2)}$ equal to each other), the direction  $d$ is not an anti-Stokes direction of the equation  at $t=t_0$. Thus, if $Y_{d-,t_0}(z;t_0)$ and $Y_{d+,t_0}(z;t_0)$ are the resummation of $\widehat{Y}$ along two directions $d_-$ and $d_+$ that are sufficiently close to $d$ and separated by $d$, then $Y_{d-,t_0}(z;t_0)=Y_{d+,t_0}(z;t_0)$. 

Now if $t_1\in\Sigma$ is sufficiently close to $t_0$, then there exists a path $\gamma\subset \Sigma\cup\{t_0\}$ from $t'$ to $t_0$ such that $d_-,d_+\notin {\rm aSR}(hu^{(1)}+htu^{(2)})$ for all $t\in \gamma$ (i.e., during the movement along the path, no anti-Stokes ray intersects $d-$ and $d+$). Thus both resummation $Y_{d-,t}(z;t)$ and $Y_{d+,t}(z;t)$ are defined for any $t\in \gamma$. 
It follows from Proposition \ref{thm:FormalSolY} that the coefficient matrices $Q_m(t)$ of the formal power series part $\widehat{Q}(z;t)$ of $\widehat{Y}$ only have poles at $t=0,1,\infty$ (that is smooth at $t_0$), thus $Y_{d-,t}(z;t)$ and $Y_{d+,t}(z;t)$ depend smoothly on $t\in \gamma$. 
Since they are solutions to the common linear system of PDEs , the equality of initial condition $Y_{d-,t_0}(z;t_0)=Y_{d+,t_0}(z;t_0)$ implies $Y_{d-,t'}(z;t')=Y_{d+,t'}(z;t')$. Therefore, the Stokes factor 
\[\mathcal{K}_d(t'):=Y_{d-,t'}(z;t')^{-1}Y_{d+,t'}(z;t'),\] associated to the anti-Stokes ray $d=-\mathrm{Arg}(hu_i-hu_j+t'(u_k-u_l))+{2m\pi}$ of the equation \eqref{eq:super-irregular-kz-z} at $t=t'$, is the identity for all $t'\in \Sigma$ sufficiently close to $t_0$. Equivalently, it states that $d$ is an admissible direction of $\widehat{Y}(z;t)$ for $t\in \Sigma$ and close to $t_0$.

Then Let us assume that $t_1,...,t_k\in \Sigma$ are the points such that there exist some other indices $(a_s,b_s,c_s,d_s)\ne (i,j,k,l)$ and integer $m_s$ such that 
\begin{align}\label{eq:d wide range}
 d=-\mathrm{Arg}(hu_{a_s}-hu_{b_s}+t_s(hu_{c_s}-hu_{d_s}))+{2m_s\pi}, \ s=1,...,k.
\end{align}
These points are arranged in the natural order along the curve such that $t_1$ is next to $t_0$. Then we can divide the curve \[\Sigma=\Sigma_1\cup \{t_1\} \cup \Sigma_2\cup \{t_2\}\cup\cdots \cup \{t_{k-1}\}\cup \Sigma_{k}\]
into open segments $\Sigma_s$. The Stokes factor $\mathcal{K}_d(t)$ associated to $d$ is constant for $t$ in each open piece $\Sigma_i$. In particular, we have seen that $\mathcal{K}_d(t)=1$ for $t\in \Sigma_1$. 

If we let $t$ vary over a large range on the curve $\Sigma$, for example from $\Sigma_1$ to $\Sigma_2$, some other anti-Stokes ray may cross the direction $d$. Without loss of generality, we  may assume that there exists only one $d'(t)\in {\rm aSR}(hu^{(1)}+htu^{(2)})$ such that $d'(t)\ne d$ for $t\in \Sigma_1$, and $d'(t_1)$ at $t_1$ coincides with the direction $d$.
Take a small neighborhood $B(t_1)$ of $t_1$ such that there is no other anti-Stokes rays crossing $d$ and $d'(t)$ when $t\in B(t_1)$, and let $t'\in \Sigma_1\cap B(t_1),\ t''\in\Sigma_2\cap B(t_1)$.  Then following the definition of Stokes factors, we must have (depending on which side of 
$d$ the direction $d'(t)$ approaches from)
\begin{equation}
\mathcal{K}_d(t')\cdot \mathcal{K}_{d'(t')}(t')=\mathcal{K}_{d'(t'')}(t'')\cdot \mathcal{K}_d(t'')
\end{equation}
or
\begin{equation}
\mathcal{K}_d(t'')\cdot \mathcal{K}_{d'(t'')}(t'')=\mathcal{K}_{d'(t')}(t')\cdot \mathcal{K}_d(t').
\end{equation}
According to the unipotent property of the Stokes factors (similar to Propsition \ref{prop:triangularity-sec2}) 
\[
 \mathcal{K}_{d'(t')}(t')= 1+ \alpha'\cdot\mathrm{E}_{b_1a_1}\otimes\mathrm{E}_{d_1c_1}, \quad \mathcal{K}_{d'(t'')}(t'')= 1+ \alpha''\cdot\mathrm{E}_{b_1a_1}\otimes\mathrm{E}_{d_1c_1}, \quad \mathcal{K}_{d}(t'')= 1+ \beta\cdot\mathrm{E}_{ji}\otimes\mathrm{E}_{lk}.
\]
Since $\mathcal{K}_d(t')=1$, and $(a_1,b_1,c_1,d_1)\ne (i,j,k,l)$, we have $\mathcal{K}_d(t'')$ at $t_2$ will remain the identity matrix. 
It concludes that $d$ is an admissible direction of $\widehat{Y}(z;t)$ for all $t\in \Sigma$ but $t\notin \mathcal{R}(hu;d)_0\cup \mathcal{R}(hu;d)_1$. 

Therefore, for any connected t-chamber $D$ cut out by the curves $\mathcal{R}(hu;d)_0\cup \mathcal{R}(hu;d)_1$, the resummation $Y_{d,t}(t)$ is well defined for all $t\in{D}$. Then the solution $Y_{d,{D}}(z,t)$, regarded as the continuation of $Y_{d}(z,t)$ from $t\in {D}$ to $\mathbb{C}\setminus\{0,1\}$, has the required property. \end{proof}

\begin{rmk}\label{rmk:Y-rays}
    Theorem \ref{thm:Y-on-chamber} is equivalent to the following description of anti-Stokes rays for fixed $t$. Let
    \begin{align*}
    \mathcal{R}^0(t)
    &=\{-\mathrm{Arg}(ht(u_i-u_j))+2k\pi:i\neq j\}
    &=\{d\mid t\in \mathcal{R}_i(d)\},\\
    \mathcal{R}^1(t)
    &=\{-\mathrm{Arg}(h(t-1)(u_i-u_j))+2k\pi:i\neq j\}
    &=\{d\mid t\in \mathcal{R}_i(d)\}.
    \end{align*}
    be the sets corresponding to $\mathcal{R}_0(d),\mathcal{R}_1(d)$. Then the anti-Stokes rays of \eqref{eq:super-irregular-kz-z} are exactly the set $\mathcal{R}_{\infty}\cup\mathcal{R}^0(t)\cup\mathcal{R}^1(t)$. All other $d$ is admissible.
    Moreover, the three families $\mathcal{R}_{\infty},\mathcal{R}^0(t),\mathcal{R}^1(t)$ are exactly the anti-Stokes rays in the variables $z,zt,z(t-1)$ governing the three decompositions of Proposition~\ref{lem:DecompY}; see Proposition~\ref{pro:W-on-chamber}.
\end{rmk}

% ----------------------------------------------------------------
\subsubsection{Resummation of \texorpdfstring{$_1\widehat{W},\ _2\widehat{W},\ \widehat{U}$}{W1-hat,W2-hat,U-hat} under respective coordinates}

The first factors \({}_1\widehat W\), \({}_2\widehat W\),
\(\widehat U\) satisfy systems of the form
\(\frac{1}{h}\partial_z F
=M_1(z,t)\cdot_s F+[M_2(z,t),F]_s\),
with data \((M_1,M_2)\) as in Proposition \ref{lem:DecompY}. As these are linear systems, the theory of irregular singularities and Stokes phenomenon applies to them as well. 
%In fact, each can be recast as a standard linear system on a larger space; we refer the reader to Appendix A for details.

Applying the holomorphicity of the second variable in Theorem \ref{thm:Y-decomp}, under respective coordinates we have

\begin{pro}\label{pro:W-on-chamber}
Given \(d\notin\mathcal{R}_{\infty}\), there exists a small neighbourhood of \(t=0\) (resp.\
\(t=\infty\), resp.\ \(t=1\)), namely $D_0$ (resp. $D_{\infty}$, resp. $D_1$), such that
\begin{enumerate}
\item[(a).](Set $\sigma_1\!=\!z,\sigma_2\!=\!t$)
For $t\in D_0$, the resummation of $_1\widehat{W}(\sigma_1,\sigma_2)$ with respect to $\sigma_1$ along direction $d$ is well defined on $\widetilde{\mathbb{C}^{\ast}}$ with the asymptotic behavior $_1W(\sigma_1,\sigma_2)\cdot e^{-h\sigma_1u^{(1)}}\sigma_1^{-h\delta (T^{(1)}+P)}\sim\  _1\widehat{K}(\sigma_1,\sigma_2)$;

\item[(b).](Set $\sigma_1\!=\!zt,\sigma_2\!=\!1/t$)
For $t\in D_{\infty}$, the resummation of $_2\widehat{W}(\sigma_1,\sigma_2)$ with respect to $\sigma_1$ along direction $d$ is well defined on $\widetilde{\mathbb{C}^{\ast}}$ with the asymptotic behavior $_2W(\sigma_1,\sigma_2)\cdot e^{-h\sigma_1u^{(2)}}\sigma_1^{-h\delta (T^{(2)}+P)}\sim\  _2\widehat{K}(\sigma_1,\sigma_2)$;

\item[(c).](Set $\sigma_1\!=\!z,\sigma_2\!=\!t-1$)
For $t\in D_1$, the resummation of $\widehat{U}(\sigma_1,\sigma_2)$ with respect to $\sigma_1$ along direction $d$ is well defined on $\widetilde{\mathbb{C}^{\ast}}$ with the asymptotic behavior $U(\sigma_1,\sigma_2)\cdot e^{-h\sigma_1(u^{(1)}+u^{(2)})}\sigma_1^{-h\delta (T^{(1)}+T^{(2)})}\sim \widehat{I}(\sigma_1,\sigma_2)$.
\end{enumerate}

\end{pro}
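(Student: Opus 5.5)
The plan is to treat each of the three cases in the same way; case (a) is written out and (b), (c) follow by the substitutions of Theorem \ref{thm:Y-decomp}. Fix $t\in D_0$, a small disc around $t=0$. Regard $_1\widehat{W}(\sigma_1,\sigma_2)$ as a formal solution, in the single variable $\sigma_1$, of a linear system of Poincar\'e rank one on $\mathcal{A}_V\otimes\End(\mathbb{C}^{m|n})$:
\[
\frac1h\partial_{\sigma_1}F=\Bigl(u^{(1)}+\frac{T^{(1)}+P}{\sigma_1}\Bigr)F+\Bigl[\sigma_2u^{(2)}+\frac{T^{(2)}}{\sigma_1},F\Bigr].
\]
The bracket term is a linear operator on the finite-dimensional space $\mathcal{A}_V\otimes\End(\mathbb{C}^{m|n})$. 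Writing $F$ as a vector in this space, the system becomes an ordinary linear ODE with a second order pole at $\infty$. Its leading term is $X\mapsto (u^{(1)}+\sigma_2u^{(2)})X-X\sigma_2u^{(2)}$.

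The irregular exponents are $u_i+\sigma_2(u_k-u_l)$. The anti-Stokes directions are therefore $-\mathrm{Arg}\bigl(h(u_i-u_j)+h\sigma_2(u_k-u_l-u_{k'}+u_{l'})\bigr)$. As $\sigma_2\to0$, the directions with $i\neq j$ converge to points of $\mathcal{R}_\infty$. Since $d\notin\mathcal{R}_\infty$, shrinking $D_0$ keeps $d$ at positive distance from all of them, uniformly in $\sigma_2\in D_0$.

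The remaining exponent differences, those with $i=j$, are of size $O(\sigma_2)$, and their directions can rotate freely. This is the main obstacle. The plan is to exploit the structure of Theorem \ref{thm:Y-decomp}: the coefficients $K_r(\sigma_2)$ are holomorphic at $\sigma_2=0$ and $K_r(0)=K_r'$. By Lemma \ref{lem:K-at-zero} the $K_r'$ are built only from the recursion with leading term $u^{(1)}$.

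Concretely, I would write $_1\widehat{K}$ as a formal series in $\sigma_1^{-1}$ whose coefficients are holomorphic in $\sigma_2$. I would then estimate its Borel transform $\mathcal{B}(_1\widehat{K})(\xi,\sigma_2)$ using the recursion \eqref{eq:K-rec-z}, viewed as a perturbation in $\sigma_2$ of \eqref{eq:K-rec-0}. The singularities of the Borel transform in $\xi$ sit at the points $h(u_j-u_i)+O(\sigma_2)$ with $i\neq j$. The would-be singularities near $\xi=0$, coming from $i=j$, should be absent. The reason is that the diagonal blocks are solved by the invertible operator of \eqref{eq:determine-iikl}, not by division by small exponent differences. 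To make this rigorous, I would split $F$ into its $u^{(1)}$-block-diagonal and block-off-diagonal parts. For the block-diagonal part, the equation reduces, after the gauge $e^{h\sigma_1\sigma_2u^{(2)}}$, to a regular-singular piece in $\sigma_1$. This is consistent with the factor $e^{h\sigma_1u^{(1)}}$ carrying all the irregular behaviour.

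With this, $\mathcal{B}(_1\widehat{K})$ continues analytically along the ray $-d$ with exponential bounds that are uniform on compact subsets of $D_0$. The Laplace transform $\mathcal{L}_d$ then gives a holomorphic $_1K(\sigma_1,\sigma_2)$ asymptotic to $_1\widehat{K}$ on the sector of opening $\pi$ about $d$. This is the same Borel--Laplace argument as in Proposition \ref{prop:canonical-solution-sec2}. It solves the $\sigma_1$-equation because resummation is a differential algebra homomorphism. Setting $_1W={}_1K\,e^{h\sigma_1u^{(1)}}\sigma_1^{h\delta(T^{(1)}+P)}$ and continuing analytically in $\sigma_1$ to $\widetilde{\mathbb{C}^*}$ gives (a).

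Case (b) is literally the dual computation. It uses the $\sigma_1=zt$, $\sigma_2=1/t$ equations written at the end of the proof of Theorem \ref{thm:Y-decomp}, with $u^{(2)}$ playing the role of $u^{(1)}$. Case (c) uses the $\widehat{I}$-equations with leading term $u^{(1)}+u^{(2)}$ and perturbation $\sigma_2u^{(2)}$. There the commutation relations $[P,T^{(1)}+T^{(2)}]=[P,\delta(T^{(1)}+T^{(2)})]=0$ replace Lemma \ref{lem:K-at-zero}. One point needs care in (c). The relevant anti-Stokes set at $\sigma_2=0$ is now $\{-\mathrm{Arg}(h(u_i+u_k-u_j-u_l))\}$, not just $\mathcal{R}_\infty$. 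So I expect to argue that the exponent differences with $(i,k)\neq(j,l)$ but $u_i+u_k=u_j+u_l$, or those not of the form $u_i-u_j$, carry trivial Stokes data. This would follow from the isomonodromy argument of Theorem \ref{thm:Y-on-chamber}, after perturbing $u$ generically if necessary.
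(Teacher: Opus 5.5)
\textbf{There is a genuine gap at the central step.} Your reduction to a rank-one linear ODE is fine, and so is your handling of the exponent differences with $i\neq j$, whose rays tend to $\mathcal{R}_\infty$. You also correctly isolate the real difficulty: the differences of size $O(\sigma_2)$, i.e.\ the rays $-\mathrm{Arg}(h\sigma_2(u_k-u_l))$, which rotate with $\arg\sigma_2$ and sweep across $d$.

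The mechanism you propose for removing these rays does not work, for two reasons.
\begin{enumerate}
\item For $\sigma_2\neq0$ the entries $K_{r+1}^{iikl}$ with $k\neq l$ are \emph{not} produced by the invertible operator of \eqref{eq:determine-iikl}. In \eqref{eq:K-rec-z} their coefficient is $(u_i-u_i)+\sigma_2(u_l-u_k)=\sigma_2(u_l-u_k)$, so they are obtained exactly by dividing by the small differences. Hence \eqref{eq:K-rec-z} is a singular, not a regular, perturbation of \eqref{eq:K-rec-0}. Estimates run through it lose a factor $|\sigma_2|^{-1}$ per step, which is precisely the growth that Borel singularities at distance $O(|\sigma_2|)$ from $\xi=0$ would produce.
\item The block-diagonal part does not become regular singular after the gauge by $e^{h\sigma_1\sigma_2u^{(2)}}$. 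Since $u^{(2)}$ commutes neither with $T^{(2)}$ nor with $P$, the gauge turns the $\sigma_1^{-1}$ terms into coefficients carrying $e^{\pm h\sigma_1\sigma_2(u_k-u_l)}$. Equivalently, in $\eta=\sigma_1\sigma_2$ that block is an irregular system with leading term $\mathrm{ad}_{u^{(2)}}$ and residue involving $\mathrm{ad}_{T^{(2)}}$, comparable to \eqref{eq:monoF2}. Its Stokes rays are exactly the rotating ones.
\end{enumerate}
So the absence of Stokes jumps along these rays is a property of the particular solution ${}_1\widehat K$, not of the equation. It is the whole content of the proposition, and your argument does not supply it. The same gap recurs in (c), where the components $(i,k)$ and $(k,i)$ coalesce at $\sigma_2=0$ with differences $\sigma_2(u_k-u_i)$.

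\textbf{What is missing, and two ways to supply it.} You need control that is uniform in $\sigma_2$. This has to come from the holomorphy of $K_r$ at $\sigma_2=0$ (Theorem \ref{thm:Y-decomp}), which in the paper relies on the compatible $\sigma_2$-equation \eqref{eq:K-rec-t} and the normalization $K_0=(1-\sigma_2)^{h\delta P}$, not on the $\sigma_1$-recursion.

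The paper exploits it through isomonodromy. At $\sigma_2=0$, Lemma \ref{lem:K-at-zero} removes the bracket with $T^{(2)}$ and leaves \eqref{eq:1W-reduced}, whose anti-Stokes set is $\mathcal{R}_\infty$. The argument of Theorem \ref{thm:Y-on-chamber} then applies: the Stokes factors of coalescing pairs are locally constant along the compatible $\sigma_2$-flow and trivial at the coalescence point. This propagates admissibility of $d$ to small $\sigma_2$.

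Inside your Borel framework you could instead argue as follows. Each $K_r$ is holomorphic on a fixed disc $|\sigma_2|\le\epsilon$. On the circle $|\sigma_2|=\epsilon$ every nonzero divisor $(u_j-u_i)+\sigma_2(u_l-u_k)$ is at least a constant times $\epsilon$, so the usual recursion estimate gives $\|K_r\|\le C(A/\epsilon)^r r!$ there. The maximum principle extends this bound to the whole disc. Then $\mathcal{B}({}_1\widehat K)$ is analytic on a disc about $\xi=0$ whose radius is independent of $\sigma_2$. Once $|\sigma_2|$ is small enough, this disc contains all the $O(|\sigma_2|)$ candidate singularities, so they cannot be singular.

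Your remark that in (c) the $\sigma_2=0$ system has naive anti-Stokes directions $-\mathrm{Arg}(h(u_i+u_k-u_j-u_l))$ beyond $\mathcal{R}_\infty$ is a fair point. The paper's ``analogous'' passes over it.
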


\begin{proof}
We prove~\(\mathrm{(i)}\) where $(\sigma_1,\sigma_2)=(z,t)$; the other cases are analogous under respective coordinates. Since $_1\widehat{K}(\sigma_1, \sigma_2)$ is holomorphic at $\sigma_2 = 0$, and $T^{(2)}$ commutes with the solution
$_1\widehat{K}(\sigma_1;\sigma_2 = 0)$ by Lemma \ref{lem:K-at-zero}, substituting $\sigma_2 = 0$  into the $\sigma_2$-equation for $_1W(\sigma_1,\sigma_2)$ in \eqref{eq:zt-funcW1F2}, we have
\begin{equation}\label{eq:1W-reduced}
  \frac{1}{h}\,\frac{\partial\, _1\!W}{\partial \sigma_1}\bigg|_{\sigma_2=0}
  \;=\;
  \Bigl( u^{(1)} + \frac{T^{(1)} + P}{\sigma_1} \Bigr)\; _1\!W(\sigma_1;\sigma_2 = 0).
\end{equation}
This is a standard meromorphic linear system 
with irregular leading term $u^{(1)}$
and regular singular part $(T^{(1)}+P)/\sigma_1$.
The anti-Stokes directions  are
precisely
\[
  \bigl\{-\mathrm{Arg}\bigl(h(u_i - u_j)\bigr) : i \neq j\bigr\}
  \;=\; R_\infty.
\]
In particular, every direction $d \notin R_\infty$ is admissible
for the system \eqref{eq:1W-reduced}.
Since $_1W(\sigma_1,\sigma_2)$ also satisfies a compatible system for $\sigma_2$, we can use the same argument as in the proof of
Proposition \ref{thm:Y-on-chamber} (c.f. Remark \ref{rmk:Y-rays}) to conclude that the direction $d$ remains admissible
for the resummation of $_1\widehat{W}(\sigma_1, \sigma_2)$
with respect to $\sigma_1$ for all $\sigma_2$ in a sufficiently small neighbourhood of $\sigma_2=0$.
\end{proof}

% ----------------------------------------------------------------
\subsubsection{Six solutions: definition and continuation}
We remark that $\mathrm{aSR}_t(hu;d)=\mathrm{aSR}_t(hu;d+\pi)$. Therefore, Theorem \ref{thm:Y-on-chamber} allows us to construct the following six solutions $Y(z,t)$ with different resummation directions and $t$-chambers. Note that the elements in $\widetilde{\mathbb{C}^{\star}}$ is determined by both $\theta_1=\arg t$ and $\theta_2=\arg (t-1)$ together with it image $\underline{t}\in\mathbb{C}^{\star}$; therefore, given that $\underline{t_0}$ lies in the intervals $(-\infty,0)\cup(0,1)\cup(1,\infty)$, it is sufficient to determine the $t$-chamber containing $t_0$ by the arguments $\theta_1,\theta_2\in \{k\pi\mid k\in\mathbb{Z}\}$. The corresponding $t$-chamber which will be denoted $D_{\theta_1,\theta_2}$.

\begin{defi}\label{def:Six-solutions}
For suitable pairs $(\theta_1,\theta_2)$ (to be specified below), let $Y_{d,\theta_1,\theta_2}$ denote the solution obtained by the resummation of $\widehat{Y}(z,t)$ with respect to $z$ along the direction $d$, while $t$ is in the $t$-chamber $D_{\theta_1,\theta_2}$. Under this notation, we define six fundamental solutions
\begin{align*}
    &Y_{012}=Y_{d,0,0},\quad
    &&Y_{021}=Y_{d,0,\pi},\quad 
    &&Y_{201}=Y_{d,\pi,\pi},\\
    &Y_{102}=Y_{d+\pi,-\pi,-\pi},\quad
    &&Y_{120}=Y_{d+\pi,0,-\pi},\quad
    &&Y_{210}=Y_{d+\pi,0,0}.
\end{align*}    
\end{defi}

\begin{pro}\label{thm:Y_d0d1d2}
Given $d\notin \mathcal{R}_{\infty}$, for the appropriate six choices $d_0\in\{d,d+\pi\}, \, \theta_1, \theta_2\in \{0,\pm \pi\}$ above,
\begin{enumerate}
\item[(a).]
(Around $t=0$) Set $\sigma_1=z,\sigma_2=t$ and $\eta=zt$, under change of variables holds
\begin{equation}\label{eq:Y_d0d1d2-a}
Y_{d_0,\theta_1, \theta_2}(z,t)\cdot \left(\frac{t-1}{1-t}\right)^{-h\delta P}
=
\ _1W_{d_0}(\sigma_1,\sigma_2)\cdot F_{d_0+\theta_1}^{(2)}(\eta);
\end{equation}
where $_1W_{d_0}(\sigma_1,\sigma_2)$ and $F_{d_0+\theta_1}^{(2)}(\eta)$ denote the resummation of $_1\widehat{W}(\sigma_1,\sigma_2)$ w.r.t.$\sigma_1$ and $\widehat{F}^{(2)}(\eta)$ w.r.t. $\eta$ along $d_0$ and $d_0+\theta_1$ respectively.

\item[(b).]
(Around $t=\infty$) Set $\sigma_1=zt,\sigma_2=1/t$ and $\eta=z$, under change of variables holds
\begin{equation}\label{eq:Y_d0d1d2-b}
Y_{d_0,\theta_1, \theta_2}(z,t)
=
\ _2W_{d_0+\theta_1}(\sigma_1,\sigma_2)\cdot F_{d_0}^{(1)}(\eta);
\end{equation}
where $_2W_{d_0+\theta_1}(\sigma_1,\sigma_2)$ and $F_{d_0}^{(1)}(\eta)$ denote the resummation of $_2\widehat{W}(\sigma_1,\sigma_2)$ w.r.t.$\sigma_1$ and $\widehat{F}^{(1)}(\eta)$ w.r.t. $\eta$ along $d_0+\theta_1$ and $d_0$ respectively.

\item[(c).]
(Around $t=1$) Set $\sigma_1=z,\sigma_2=t-1$ and $\eta=zt-z$, under change of variables holds
\begin{equation}\label{eq:Y_d0d1d2-c}
Y_{d_0,\theta_1, \theta_2}(z,t)
=
U_{d_0}(\sigma_1,\sigma_2)\cdot X_{d_0+\theta_2}(\eta);
\end{equation}
where $U_{d_0}(\sigma_1,\sigma_2)$ and $X_{d_0+\theta_2}(\eta)$ denote the resummation of $\widehat{U}(\sigma_1,\sigma_2)$ w.r.t.$\sigma_1$ and $\widehat{X}(\eta)$ w.r.t. $\eta$ along $d_0$ and $d_0+\theta_2$ respectively.
\end{enumerate}
\end{pro}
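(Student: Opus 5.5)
The argument lifts the formal factorizations of Proposition \ref{lem:DecompY} to holomorphic ones. It uses the uniqueness in Theorem \ref{thm:Y-on-chamber} together with Watson's lemma. We treat case (a); cases (b) and (c) follow the same pattern with the coordinates of Theorem \ref{thm:Y-decomp}(b),(c) and Proposition \ref{pro:W-on-chamber}(b),(c).

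\textbf{Step 1: the candidate product is a solution.} Fix one of the six triples $(d_0,\theta_1,\theta_2)$. For $t$ in $D_0\cap D_{\theta_1,\theta_2}$, consider the product
\[
Z(z,t):={}_1W_{d_0}(z,t)\,F^{(2)}_{d_0+\theta_1}(zt)\left(\tfrac{t-1}{1-t}\right)^{h\delta P}.
\]
Here $D_0$ is the neighbourhood of Proposition \ref{pro:W-on-chamber}(a), and we must check that it meets the chamber $D_{\theta_1,\theta_2}$; for $\theta_1\in\{0,\pi\}$ this intersection is a small sector near $0$ on the appropriate lift. The factor ${}_1W_{d_0}$ is given by Proposition \ref{pro:W-on-chamber}(a). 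The factor $F^{(2)}_{d_0+\theta_1}(\eta)$ is the canonical solution of \eqref{eq:monoF2} from Proposition \ref{prop:canonical-solution-sec2}, which applies because \eqref{eq:monoF2} is a copy of \eqref{eq:MonoSingularSys} acting in the second slot. Since Borel--Laplace summation is a differential algebra morphism, the holomorphic factors satisfy the same systems \eqref{eq:zt-funcW1F2} as their formal counterparts. The Leibniz computation in the proof of Proposition \ref{lem:DecompY} then shows that $Z$ solves \eqref{eq:super-irregular-kz-z}--\eqref{eq:super-irregular-kz-t}.

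\textbf{Step 2: matching the asymptotics.} For $z\to\infty$ with $\arg z\in(d_0-\tfrac\pi2,d_0+\tfrac\pi2)$ we have $\arg(zt)=\arg z+\theta_1$. Hence $\eta=zt$ lies in the sector centred at $d_0+\theta_1$, and $F^{(2)}_{d_0+\theta_1}(\eta)\sim \widehat H^{(2)}(\eta)e^{h\eta u^{(2)}}\eta^{h\delta T^{(2)}}$ there. Combining this with the asymptotics of ${}_1W_{d_0}$, and using that the exponential and power factors commute with the remaining ones, we get
\[
Z\,e^{-hz(u^{(1)}+tu^{(2)})}z^{-h\delta T^{(1)}}(zt)^{-h\delta T^{(2)}}(zt-z)^{-h\delta P}\sim {}_1\widehat K\,\widehat H^{(2)}(1-t)^{-h\delta P}=\widehat Q .
\]
This is exactly the identity $\widehat Q={}_1\widehat K\widehat H^{(2)}(1-t)^{-h\delta P}$ of Proposition \ref{lem:DecompY}. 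The branch factor $(\tfrac{t-1}{1-t})^{h\delta P}$, fixed via Remark \ref{rmk:multivalue_1-t} according to $\theta_2$, converts $(1-t)^{h\delta P}$ into the branch of $(t-1)^{h\delta P}$ used on $D_{\theta_1,\theta_2}$.

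\textbf{Step 3: conclusion.} By the uniqueness in Theorem \ref{thm:Y-on-chamber}, which rests on Watson's lemma, $Z=Y_{d_0,\theta_1,\theta_2}$ on the overlap. Both sides solve the same compatible linear system, so analytic continuation in $t$ extends the identity to the whole chamber, interpreting the right-hand side through continuation of ${}_1W_{d_0}$ in $\sigma_2$.

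\textbf{Main obstacle.} The delicate point is the sector bookkeeping in Step 2. One needs the $\eta$-sector determined by $d_0+\theta_1$ (respectively $d_0+\theta_2$ in case (c), where $\eta=z(t-1)$ and $\arg\eta=\arg z+\theta_2$) to be consistent with the chamber labels in Definition \ref{def:Six-solutions}. One also needs $d_0+\theta_i$ to be admissible for \eqref{eq:monoF2} and \eqref{eq:monoX}; this holds because $d\notin\mathcal R_\infty$ and $\theta_i\in\pi\mathbb Z$. Finally, the choice of branch for $(\tfrac{t-1}{1-t})^{h\delta P}$ must be checked case by case for the six triples.
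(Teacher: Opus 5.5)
Your proposal is correct and follows essentially the paper's route. Both lift the formal factorization of Proposition~\ref{lem:DecompY} by Borel--Laplace summation in $z$ along $d_0$. Both then identify the $z$-resummation of $\widehat F^{(2)}(zt)$ along $d_0$ with the $\eta$-resummation along $d_0+\arg t$, which coincides with the one along $d_0+\theta_1$ while $t$ stays in the chamber. That last point settles the sector bookkeeping you flag, because the walls $\mathcal{R}_0(d)$ are exactly where $d+\arg t$ becomes anti-Stokes.

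The only difference is that you close with the uniqueness in Theorem~\ref{thm:Y-on-chamber} on $D_0\cap D_{\theta_1,\theta_2}$, rather than invoking the multiplicativity of summation directly; this is a harmless, slightly more careful variant. Note that case (a) must also cover $\theta_1=-\pi$, since that is the solution $Y_{102}$ used in Lemma~\ref{lem:Y102-Y120}.
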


\begin{proof}
    We prove (a). In this case let  $\sigma_1=z, \sigma_2=t$ and $\eta=zt$. Since Borel-Laplace resummation is a differential algebra homomorphism, for every fixed $t$ resum w.r.t. $z$ on both sides of the decomposition \eqref{eq:coord-Y=W1F2} gives
    \begin{equation*}
    \widehat{Y}_{d_0,d_1,d_2}(z,t)\cdot \left(\frac{t-1}{1-t}\right)^{-h\delta P}=
    \ _1\widehat{W}_{d_0}(z,t)\widehat{F}^{(2)}_{d_0}(z,t), 
    \end{equation*}
    where the subscript stands for the direction of $z$. Now, we only have to note that for fixed $t$ with $\arg t=d_1$ and $\arg (t-1)=d_2$, resumming $_1\widehat{W}(z;t)$ and $\widehat{F}^{(2)}(z;t)$ w.r.t. $z$ along direction $d_0$ are exactly resumming $_1\widehat{W}(\sigma_1,\sigma_2)$ and $\widehat{F}^{(2)}(\eta)$ w.r.t. $\sigma_1$ and $\eta$ along direction $d_0$ and $d_0+d_1$ respectively.

    Cases (b) and (c) are completely similar, as one resum the both sides of \eqref{eq:coord-Y=W2F1} and \eqref{eq:coord-Y=UX} along $d_0$, then identify with \eqref{eq:Y_d0d1d2-b} and \eqref{eq:Y_d0d1d2-c} via the corresponding coordinates and $\arg t,\arg (t-1)$ respectively.
\end{proof}

%% file: final_sections/section3/Section3.5.tex
\subsection{A proof of the RLL\texorpdfstring{$\,=\,$}{=}LLR relation}
\label{sec:3.5}

In this subsection we derive the RLL relation from the connection identities between the solutions defined in Definition \ref{def:Six-solutions}. The idea is to compare the transition matrix between $Y_{012}$ and $Y_{210}$ by two homotopic paths of analytic continuation. Each path traverses three intermediate chambers, and the transition at each step is governed by a Stokes factor of one of the three systems. Equating the total transition matrices along the two paths yields a hexagon-type identity, which reduces to the RLL relation after conjugation by diagonal factors.

\medskip

Let \(S^+_d(T^{(2)})\), \(S^+_d(T^{(1)})\) and \(S^+_d(P)\) be the Stokes matrix $S_d^+(u)$ of the system \eqref{eq:monoF2}, \eqref{eq:monoF1} and \eqref{eq:monoX} respectively. Then it holds
\begin{equation*}
    F_d^{(2)}(\eta)=F_{d+\pi}^{(2)}(\eta)\cdot S_d^+(T^{(2)});\quad
    F_d^{(1)}(\eta)=F_{d+\pi}^{(1)}(\eta)\cdot S_d^+(T^{(1)});\quad
    X_d(\eta)=X_{d+\pi}(\eta)\cdot S_d^+(P),
\end{equation*}
where the subscript denotes the respective resummation direction of $\eta$.

We compare two analytic continuations from~\(Y_{012}\) to~\(Y_{210}\):
\[
\textbf{Path I:}\quad
Y_{012}\xrightarrow{\text{path I-1}} Y_{021}\xrightarrow{\text{path I-2}} Y_{201}
\xrightarrow{\text{path I-3}} Y_{210},
\]
\[
\textbf{Path II:}\quad
Y_{012}\xrightarrow{\text{path II-1}} Y_{102}\xrightarrow{\text{path II-2}} Y_{120}
\xrightarrow{\text{path II-3}} Y_{210}.
\]
For each path segment $A\xrightarrow{l}B$ above, $l$ connects the definition domains of $A$ and $B$ in $\widetilde{\mathbb{C}^{\star}}$ and is therefore uniquely determined by Definition \ref{def:Six-solutions} (see Figure \ref{fig:paths-t-plane}). More precisely, we consider the transition obtained by continuing $A$ along $l$ into the domain of $B$ and comparing it with $B$ itself. As \textbf{Path I} and \textbf{Path II} are trivially homotopic in $\widetilde{\mathbb{C}^{\star}}$, the transition between $Y_{012}$ and $Y_{210}$ can be identified by composing the transitions along the two paths, each of which is divided into three segments by two intermediate solutions (see Figure \ref{fig:solution_connect}).

\input{final_pictures/pic_Continuation-Path-Page}

We will first compute one of the transition as an example.

\begin{lem}\label{lem:Y102-Y120}
Along the continuation path II-2, we have
\begin{equation}\label{eq:step-II2}
    Y_{102}=Y_{120}\cdot
e^{+\pi \mathi h\delta P}\,S^+_d(T^{(2)})\,e^{-\pi \mathi h\delta P}.
\end{equation}
\end{lem}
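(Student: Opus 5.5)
We compare $Y_{102}=Y_{d+\pi,-\pi,-\pi}$ and $Y_{120}=Y_{d+\pi,0,-\pi}$. Both are resummed along $d_0=d+\pi$ and share $\theta_2=-\pi$; they differ only in $\arg t$, which goes from $-\pi$ to $0$. So path II-2 moves $t$ from the negative real axis, across a neighbourhood of $t=0$, to the interval $(0,1)$, while $\arg(t-1)$ stays near $-\pi$. The natural tool is the decomposition around $t=0$, Proposition \ref{thm:Y_d0d1d2}(a). On a small neighbourhood $D_0$ of $t=0$ it gives
\begin{align*}
Y_{102}\cdot\Bigl(\tfrac{t-1}{1-t}\Bigr)^{-h\delta P}&={}_1W_{d+\pi}(z,t)\,F^{(2)}_{d}(zt),\\
Y_{120}\cdot\Bigl(\tfrac{t-1}{1-t}\Bigr)^{-h\delta P}&={}_1W_{d+\pi}(z,t)\,F^{(2)}_{d+\pi}(zt).
\end{align*}
Here $d+\pi+\theta_1$ equals $d$, respectively $d+\pi$.

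The key point is that ${}_1W_{d+\pi}$ is the same in both identities and is single-valued in $t$ near $0$. This holds because its coefficients are holomorphic at $\sigma_2=0$ (Theorem \ref{thm:Y-decomp}(a)), and its resummation along $d+\pi$ is defined uniformly for $t\in D_0$ (Proposition \ref{pro:W-on-chamber}(a)). Continuing along II-2 therefore changes only the $F^{(2)}$ factor. The factor $F^{(2)}(\eta)$ solves \eqref{eq:monoF2}, so it has the same Stokes data as \eqref{eq:MonoSingularSys} with $T$ replaced by $T^{(2)}$. Its definition $F^{(2)}_d(\eta)=F^{(2)}_{d+\pi}(\eta)\,S^+_d(T^{(2)})$ then gives
\begin{equation*}
Y_{102}\Bigl(\tfrac{t-1}{1-t}\Bigr)^{-h\delta P}=Y_{120}\Bigl(\tfrac{t-1}{1-t}\Bigr)^{-h\delta P}S^+_d(T^{(2)}).
\end{equation*}
This identity holds on the overlap of the domains after continuation along II-2. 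In that overlap, $\arg t$ passes through the range where $\arg\eta$ sits in the common sector of $F^{(2)}_d$ and $F^{(2)}_{d+\pi}$.

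It remains to evaluate the branch factor. Throughout II-2 we have $\arg(t-1)\in(-\pi,0)$, so Remark \ref{rmk:multivalue_1-t} (with $k=0$) gives $\bigl(\tfrac{t-1}{1-t}\bigr)^{h\delta P}=e^{-\pi\mathi h\delta P}$. Multiplying on the right by $e^{-\pi\mathi h\delta P}$ yields
\begin{equation*}
Y_{102}=Y_{120}\,e^{\pi\mathi h\delta P}\,S^+_d(T^{(2)})\,e^{-\pi\mathi h\delta P},
\end{equation*}
which is \eqref{eq:step-II2}.

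The main obstacle is bookkeeping of branches and sectors, not analysis. Two checks are needed. First, the chambers $D_{-\pi,-\pi}$ and $D_{0,-\pi}$ must both meet $D_0$ on the appropriate sheet, so that both decompositions hold simultaneously and the continuation of $Y_{102}$ is actually compared inside $D_0$. Second, the sign of the exponent, i.e.\ whether $\arg(t-1)$ lies just above $-\pi$, must be verified against Figure \ref{fig:paths-t-plane}. If the sheet convention gives $\arg(t-1)\in(-2\pi,-\pi)$ instead, the formula of Remark \ref{rmk:multivalue_1-t} would change, so I would verify it on the explicit path first.
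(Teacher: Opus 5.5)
Your proposal is correct and follows essentially the same route as the paper. You use decomposition (a) of Proposition \ref{thm:Y_d0d1d2}, in which the common factor ${}_1W_{d+\pi}$ is single-valued near $t=0$ by Proposition \ref{pro:W-on-chamber}(a), together with the Stokes relation $F^{(2)}_d=F^{(2)}_{d+\pi}S^+_d(T^{(2)})$ and the branch value $\bigl(\tfrac{t-1}{1-t}\bigr)^{h\delta P}=e^{-\pi\mathi h\delta P}$ from Remark \ref{rmk:multivalue_1-t}; the branch check you flagged does give $\arg(t-1)\in(-\pi,0)$, since II-2 passes below $t=0$ starting from $\theta_2=-\pi$.
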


\begin{proof}
    Use (a) of Proposition \ref{thm:Y_d0d1d2}. By Definition \ref{def:Six-solutions} and \eqref{eq:Y_d0d1d2-a} one have
    \begin{align}\label{eq:Y102-and-Y120-1}
    &Y_{102}(z,t)\cdot \left(\frac{t-1}{1-t}\right)^{-h\delta P}=
    \ _1W_{d+\pi}(\sigma_1,\sigma_2)\cdot F^{(2)}_{d}(\eta)= _2W_{d}(z,t)F_{1,d}(z);\\
    \label{eq:Y102-and-Y120-2}
    &Y_{120}(z,t)\cdot \left(\frac{t-1}{1-t}\right)^{-h\delta P}=
    \ _1W_{d+\pi}(\sigma_1,\sigma_2)\cdot F^{(2)}_{d+\pi}(\eta)= _2W_{d+\pi}(z,t)F_{1,d+\pi}(z).
    \end{align}
    Along path II-2 we have $\arg (t\!-\!1)\!\in\! (-\pi,0)$, therefore $\big(\frac{t-1}{1-t}\big)^{h\delta P}\!=\!e^{-\pi \mathi h \delta P}$ by Remark \ref{rmk:multivalue_1-t}. Meanwhile since path II-2 is sufficiently close to $t=0$, Proposition \ref{pro:W-on-chamber} (a) promises $_1W_d(\sigma_1,\sigma_2)$ is single valued on all of path II-2. Combining these with \eqref{eq:Y102-and-Y120-1}-\eqref{eq:Y102-and-Y120-2} derives the identity \eqref{eq:step-II2}.
\end{proof}

\medskip\noindent
\textbf{Path I:\ \(Y_{012}\to Y_{021}\to Y_{201}\to Y_{210}\).}

For the first step where $t$ is close to $1$, use (c) of Proposition \ref{thm:Y_d0d1d2}. By Definition \ref{def:Six-solutions} and \eqref{eq:Y_d0d1d2-c} one has
\begin{align}\label{eq:Y012-and-Y021-1}
    &Y_{012}=U_d(\sigma_1,\sigma_2)\cdot X_d(\eta);\\
    \label{eq:Y012-and-Y021-2}
    &Y_{021}=U_d(\sigma_1,\sigma_2)\cdot X_{d+\pi}(\eta).
\end{align}
Since path I-1 is sufficiently close to $t=1$, Proposition \ref{pro:W-on-chamber} (c) ensures that $U_d(\sigma_1,\sigma_2)$ is single valued on all of path I-1. Combining with \eqref{eq:Y012-and-Y021-1}-\eqref{eq:Y012-and-Y021-2} derives the identity
\begin{equation}\label{eq:step-I1}
    Y_{012}=Y_{021}\cdot S^+_d(P).
\end{equation}

For the second step where $t$ is close to $0$, similar to Lemma \ref{lem:Y102-Y120}, one uses (a) of Proposition \ref{thm:Y_d0d1d2} and (a) of Proposition \ref{pro:W-on-chamber}. By Definition \ref{def:Six-solutions} and \eqref{eq:Y_d0d1d2-a} one derives
\begin{equation}\label{eq:step-I2}
Y_{021}=Y_{201}\cdot
e^{-\pi \mathi h\delta P}\,S^+_d(T^{(2)})\,e^{+\pi \mathi h\delta P}.
\end{equation}
We note here that along path I-2 $\arg (t\!-\!1)\!\in\!(0,\pi)$, therefore $\big(\frac{t-1}{1-t}\big)^{h\delta P}\!=\!e^{\pi \mathi h \delta P}$ is opposite from Lemma \ref{lem:Y102-Y120}.

For the third step where $t$ is close to $\infty$, (b) of Proposition \ref{thm:Y_d0d1d2} and (b) of Proposition \ref{pro:W-on-chamber} are used. By Definition \ref{def:Six-solutions} and \eqref{eq:Y_d0d1d2-b} one derives
\begin{equation}\label{eq:step-I3}
Y_{201}=Y_{210}\cdot S^+_d(T^{(1)}).
\end{equation}

Substituting~(\ref{eq:step-I2}) and~(\ref{eq:step-I3})
into~(\ref{eq:step-I1}) gives the total transition between $Y_{012}$ and $Y_{210}$ along path I:
\begin{equation}\label{eq:total-I}
Y_{012}=Y_{210}\cdot
S^+_d(T^{(1)})\,e^{-\pi ih\delta P}\,S^+_d(T^{(2)})\,
e^{+\pi ih\delta P}\,S^+_d(P).
\end{equation}

\medskip\noindent
\textbf{Path II:\ \(Y_{012}\to Y_{102}\to Y_{120}\to Y_{210}\).}

Step II-1 crosses~\(t=\infty\). (b) of Proposition \ref{thm:Y_d0d1d2} and Proposition \ref{pro:W-on-chamber} are used. By Definition \ref{def:Six-solutions} and \eqref{eq:Y_d0d1d2-b} one derives
\begin{equation}\label{eq:step-II1}
Y_{012}=Y_{102}\cdot S^+_d(T^{(1)}).
\end{equation}

Step II-3 crosses~\(t=1\). (c) of Proposition \ref{thm:Y_d0d1d2} and Proposition \ref{pro:W-on-chamber} are used. By Definition \ref{def:Six-solutions} and \eqref{eq:Y_d0d1d2-c} one derives
\begin{equation}\label{eq:step-II3}
Y_{120}=Y_{210}\cdot S^+_d(P).
\end{equation}

Substituting~(\ref{eq:step-II2}) and~(\ref{eq:step-II3})
into~(\ref{eq:step-II1}) gives the total transition between $Y_{012}$ and $Y_{210}$ along path II:
\begin{equation}\label{eq:total-II}
Y_{012}=Y_{210}\cdot
S^+_d(P)\,e^{+\pi \mathi h\delta P}\,S^+_d(T^{(2)})\,
e^{-\pi \mathi h\delta P}\,S^+_d(T^{(1)}).
\end{equation}

We are now ready to conclude:
\begin{pro}\label{pro:naive-RLL}
    In $\mathrm{End}(V)\otimes_s\mathrm{End}(\mathbb{C}^{m|n})\otimes_s\mathrm{End}(\mathbb{C}^{m|n})$, the following identity holds:
\begin{equation}\label{eq:naive-RLL}
S^+_d(T^{(1)})\,e^{-\pi \mathi h\delta P}\,S^+_d(T^{(2)})\,
e^{+\pi \mathi h\delta P}\,S^+_d(P)
\;=\;
S^+_d(P)\,e^{+\pi \mathi h\delta P}\,S^+_d(T^{(2)})\,
e^{-\pi \mathi h\delta P}\,S^+_d(T^{(1)}).
\end{equation}
\end{pro}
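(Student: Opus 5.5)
The identity follows by comparing the two total transitions \eqref{eq:total-I} and \eqref{eq:total-II}. Both express $Y_{012}$ as $Y_{210}$ multiplied on the right by a constant element of $\mathrm{End}(V)\otimes\mathrm{End}(\mathbb{C}^{m|n})\otimes\mathrm{End}(\mathbb{C}^{m|n})$. The plan has three steps.

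First, I justify that the two continuations produce the same function. Path I and Path II start at the domain $D_{0,0}$ of $Y_{012}$ and end at the domain of $Y_{210}$. They lie in $\widetilde{\mathbb{C}^{\star}}$ and are homotopic there with fixed endpoints (Figure \ref{fig:paths-t-plane}). Since $Y_{012}$ solves the compatible system \eqref{eq:super-irregular-kz-z}--\eqref{eq:super-irregular-kz-t}, its analytic continuation in $t$ along homotopic paths gives the same solution on the target chamber. The continuation of $Y_{012}$ along either path therefore equals $Y_{210}\cdot C_{\rm I}$ and $Y_{210}\cdot C_{\rm II}$ respectively, where $C_{\rm I}$ and $C_{\rm II}$ are the products on the right-hand sides of \eqref{eq:total-I} and \eqref{eq:total-II}. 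The composition of the intermediate steps must be read consistently as successive continuations, each step being \eqref{eq:step-I1}--\eqref{eq:step-I3} or \eqref{eq:step-II1}, \eqref{eq:step-II2}, \eqref{eq:step-II3}. This is legitimate because the connection matrices are constants independent of $(z,t)$: they survive continuation unchanged.

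Second, I cancel $Y_{210}$. The function $Y_{210}(z,t)$ is a fundamental solution. Its asymptotics in Theorem \ref{thm:Y-on-chamber} are $\widehat{Q}\cdot(\text{invertible even diagonal factor})$ with $\widehat Q=1+O(z^{-1})$, so it is invertible in the Banach algebra, exactly as argued for $F_d$ in Section \ref{sec:2.4}. Then $Y_{210}C_{\rm I}=Y_{210}C_{\rm II}$ gives $C_{\rm I}=C_{\rm II}$, which is \eqref{eq:naive-RLL}.

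Third, I make sure all products are taken in the super tensor product sense, so that the factors compose as written. All entries of the Stokes matrices $S^+_d(T^{(1)})$, $S^+_d(T^{(2)})$ and $S^+_d(P)$ are of parity matching their matrix units, by Remark \ref{rmk:stokes mat} applied to each reduced system. Each of these matrices is therefore an even element, and $e^{\pm\pi\mathi h\delta P}$ is even and diagonal. Consequently no extra Koszul signs arise when the constants are moved past each other in the continuation.

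I expect the main obstacle to be the first step: checking that each of the six intermediate relations is a genuine continuation along the indicated path segment, with consistent branch choices. The delicate point is the branch of $\big(\frac{t-1}{1-t}\big)^{h\delta P}$, which by Remark \ref{rmk:multivalue_1-t} contributes $e^{\mp\pi\mathi h\delta P}$ on paths I-2 and II-2. I also need the concatenated segments to be homotopic rel endpoints, with no winding around $t=0$ or $t=1$. To settle this I would track $\arg t$ and $\arg(t-1)$ along each segment, using the labels $D_{\theta_1,\theta_2}$ from Definition \ref{def:Six-solutions}. Both paths change $(\theta_1,\theta_2)$ from $(0,0)$ to the same final values, and the loop formed by Path I followed by the reverse of Path II encloses neither puncture.
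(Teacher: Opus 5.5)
Your proposal is correct and follows the paper's own argument. You compose \eqref{eq:step-I1}--\eqref{eq:step-I3} and \eqref{eq:step-II1}, \eqref{eq:step-II2}, \eqref{eq:step-II3} into \eqref{eq:total-I} and \eqref{eq:total-II}, use that continuation of $Y_{012}$ along the two homotopic paths in $\widetilde{\mathbb{C}^{\star}}$ gives the same function, and cancel the invertible $Y_{210}$.

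Your parity check is harmless but not needed, since only associativity of right multiplication by constants is used. For the homotopy, matching $\arg t$ and $\arg(t-1)$ only controls winding numbers; the cleaner justification is that Path I and Path II are each closed loops in a closed half-plane minus $\{0,1\}$, hence contractible.
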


\begin{proof}
    Since path I and path II are all homotopic to a clockwise loop around $t=1$, the analytic continuation of $Y_{012}$ must coincide. Therefore, comparing \eqref{eq:total-I} and \eqref{eq:total-II} gives the desired result.
\end{proof}

\begin{lem}\label{lem:commutativity}
The following commutativity conditions hold:
\begin{equation*}
\big[S^+_d(T^{(1)}),\;\delta T^{(2)}+\delta P\big]=0,
\qquad
\big[S^+_d(T^{(2)}),\;\delta T^{(2)}+\delta P\big]=0,
\qquad
\big[S_d^+(P),\;\delta T^{(1)}+\delta T^{(2)}\big]=0.
\end{equation*}
\end{lem}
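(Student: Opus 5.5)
The plan is to reduce each commutation to a symmetry of the ordinary differential equation defining the relevant Stokes matrix. The mechanism is a conserved diagonal element. Suppose a diagonal even element $D\in\End(V)\otimes\End(\mathbb{C}^{m|n})^{\otimes 2}$ commutes with both the irregular term and the residue of a system $\frac{dF}{d\eta}=h(A+\frac{B}{\eta})F$, where $A$ is diagonal. Suppose also that $D$ commutes with the diagonal part $\delta B$. Then $e^{sD}F_d(\eta)e^{-sD}$ is again a solution. By the remark before Proposition \ref{prop:triangularity-sec2} on the triangular factors, its formal expansion is $e^{sD}\widehat H e^{-sD}\,e^{hA\eta}\eta^{h\delta B}$, which has the same prescribed form. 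Uniqueness in Proposition \ref{prop:formal-solution-sec2} then gives $e^{sD}\widehat He^{-sD}=\widehat H$. Since conjugation by $e^{sD}$ commutes with Borel--Laplace summation, uniqueness in Proposition \ref{prop:canonical-solution-sec2} gives $e^{sD}F_de^{-sD}=F_d$ for every admissible $d$. Hence $e^{sD}S_d^{+}e^{-sD}=S_d^+$, and differentiating in $s$ yields $[S_d^+,D]=0$.

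I will apply this to each of the three systems \eqref{eq:monoF1}, \eqref{eq:monoF2} and \eqref{eq:monoX}.

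\textbf{First system.} For \eqref{eq:monoF1}, with $A=u^{(1)}$, $B=T^{(1)}$ and $D=\delta T^{(2)}+\delta P$, I will check three facts. First, $u^{(1)}$ commutes with $\delta P$, because $\delta P=-\sum_k(-1)^{\bar k}1\otimes E_{kk}\otimes E_{kk}$ is a scalar-valued diagonal element. Second, $T^{(1)}$ commutes with $\delta T^{(2)}+\delta P$. Third, $\delta T^{(1)}$ commutes with $D$.

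The second fact is the key computation. I expect it to follow from the $\mathfrak{gl}_{m|n}$-invariance of $\Omega$: the element $T^{(1)}+T^{(2)}+P$ commutes with $T^{(1)}$, after restricting to the diagonal in the second factor. More concretely, I will compute $[T^{(1)},\delta T^{(2)}]$ and $[T^{(1)},\delta P]$ directly. The first produces terms $[e_{kl},e_{jj}]\otimes E_{kl}\otimes E_{jj}$. The second produces $e_{kl}\otimes[E_{kl}\otimes 1,E_{jj}\otimes E_{jj}]$, which contributes $(\delta_{lj}-\delta_{kj})e_{kl}\otimes E_{kl}\otimes E_{jj}$ with signs. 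These should cancel against $[e_{kl},e_{jj}]=(\delta_{lj}-\delta_{jk})e_{kl}$ up to matching Koszul signs, using the factor $(-1)^{\bar j}$ in both $\delta T^{(2)}$ and $\delta P$.

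\textbf{Second system.} The same computation, with the roles of the tensor factors adjusted, handles \eqref{eq:monoF2} with $A=u^{(2)}$ and $B=T^{(2)}$. Here $[T^{(2)},\delta T^{(2)}+\delta P]$ must vanish. I will verify this as follows: $\delta T^{(2)}+\delta P$ acts on the second factor, restricted to $E_{jj}$, as $(-1)^{\bar j}(e_{jj}-E_{jj}^{(1)})$. The difference $e_{jj}\otimes1-1\otimes E_{jj}$ is exactly the coproduct-type combination that commutes with the invariant tensor $T$, since $E_{jj}$ is acting through the dual natural representation. This is the sign-sensitive step and the main obstacle. I would first test it on the $m+n=2$ cases, and then carry out the general index computation.

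\textbf{Third system.} For \eqref{eq:monoX}, with $A=u^{(2)}$, $B=P$ and $D=\delta T^{(1)}+\delta T^{(2)}$, commutation with $u^{(2)}$ and with $\delta P$ is immediate because everything is diagonal. The relation $[P,\delta T^{(1)}+\delta T^{(2)}]=0$ holds because $P$ is, up to sign, the super-flip of the two $\mathbb{C}^{m|n}$ factors. The super-flip exchanges $\delta T^{(1)}$ and $\delta T^{(2)}$, as already noted at the end of the proof of Theorem \ref{thm:Y-decomp}. With all hypotheses verified, the conserved-element mechanism gives each of the three stated commutation relations.
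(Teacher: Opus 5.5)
Your conserved-element mechanism is the paper's own argument: if the leading term, the residue and the formal monodromy of a system all commute with a fixed diagonal even element, then uniqueness of the canonical solutions forces the Stokes matrix to commute with it. Your verifications for \eqref{eq:monoF1} and \eqref{eq:monoX} are correct.

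The gap is in the second relation. The step $[T^{(2)},\delta T^{(2)}+\delta P]=0$ is false, and the problem is structural, not one of signs. Write
\[
\delta T^{(2)}+\delta P=\sum_{j}(-1)^{\bar j}\bigl(e_{jj}\otimes 1-1\otimes E_{jj}\bigr)\otimes E_{jj}.
\]
Here the coproduct-type combination $e_{jj}\otimes 1-1\otimes E_{jj}$ sits on $V$ and the \emph{first} copy of $\mathbb{C}^{m|n}$. Invariance of $\Omega$ makes it commute with $T^{(1)}$, which is exactly your first computation. But $T^{(2)}$ acts on $V$ and the \emph{second} copy, and in that slot $\delta T^{(2)}+\delta P$ carries the projector $E_{jj}$, which does not commute with $T^{(2)}$.

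In fact the second relation as printed is false, so no proof of it can succeed. Take $m|n=2|0$ and $V$ the dual natural representation. Your mechanism applied to $D=e_{kk}\otimes 1-1\otimes E_{kk}$ shows that each entry $S_{ab}$ of $S^+_d(u)$ has weight $\epsilon_a-\epsilon_b$. Combined with Theorem~\ref{thm:super-r-matrix}, this gives $S^+_d(T^{(2)})=I+c\,E_{21}\otimes 1\otimes E_{12}$ with $c=\pm(q-q^{-1})\neq 0$. Then $[S^+_d(T^{(2)}),\delta T^{(2)}]=0$, while $[S^+_d(T^{(2)}),\delta P]=c\,E_{21}\otimes(E_{11}-E_{22})\otimes E_{12}\neq 0$. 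The paper's ``the second and third are similar'' hides the same misprint.

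The proof of Theorem~\ref{thm:RLL-relation} actually needs a different relation. On the left of \eqref{eq:naive-RLL} one must rewrite $e^{-\pi\mathi h\delta P}S^+_d(T^{(2)})e^{\pi\mathi h\delta P}e^{\pi\mathi h\delta T^{(1)}}$ as $e^{\pi\mathi h\delta T^{(1)}}S^+_d(T^{(2)})$, and this requires
\[
\big[S^+_d(T^{(2)}),\;\delta T^{(1)}+\delta P\big]=0.
\]
This is the symmetric companion of the other two relations: each Stokes factor commutes with the sum of the other two formal monodromies. Your mechanism proves it directly. Write
$\delta T^{(1)}+\delta P=\sum_j(-1)^{\bar j}(1\otimes E_{jj}\otimes 1)\bigl(e_{jj}\otimes 1\otimes 1-1\otimes 1\otimes E_{jj}\bigr)$. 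The first factor is a projector in the first auxiliary slot, which neither $u^{(2)}$ nor $T^{(2)}$ touches. The second factor is the coproduct combination on $V$ and the second slot, which commutes with $u^{(2)}$ and, by invariance, with $T^{(2)}$. All these diagonal pieces also commute with $\delta T^{(2)}$. So the right move is to correct the statement, not to push the sign computation through.
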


\begin{proof}
    $S_d^+(T^{(1)})$ is computed from system \eqref{eq:monoF1}, then the fact that $u^{(1)}, T^{(1)}$ and $\delta T^{(1)}$ all commutes with $\delta(T^{(2)}+P)$ gives the first commutativity. The second and third are similar.
\end{proof}

\medskip\noindent
\textbf{Proof of Theorem \ref{thm:RLL-relation}.}

Multiply \(e^{\pi \mathi h(\delta T^{(1)}+\delta T^{(2)}+\delta P)}\) to the right of both sides of \eqref{eq:naive-RLL}, Lemma \ref{lem:commutativity} allows us to move the diagonal exponentials past the Stokes factors. According to Theorem \ref{thm:super-r-matrix} $R^{-1}=S_d^+(P)e^{\pi \mathi h\delta P}$, by rearranging identities one derives
\begin{equation}\label{eq:actual-RLL}
S^+_d(T^{(1)})\,e^{\pi \mathi h\delta T^{(1)}}\cdot 
S^+_d(T^{(2)})\,e^{+\pi \mathi h\delta T^{(2)}}\cdot
R^{-1}
\;=\;
R^{-1}\cdot 
S^+_d(T^{(2)})\,e^{+\pi \mathi h\delta T^{(2)}}\cdot
S^+_d(T^{(1)})\,e^{\pi \mathi h\delta T^{(1)}}.
\end{equation}
Recall $S_{h+}(u)=S_d^+(T)\,e^{\pi i h\delta T}$, then \eqref{eq:actual-RLL} reads exactly
\begin{equation*}
R\,S^{(1)}_{h+}(u)\,S^{(2)}_{h+}(u)
\;=\;
S^{(2)}_{h+}(u)\,S^{(1)}_{h+}(u)\,R.    
\end{equation*}
which is relation \eqref{eq:rll-plus}. The same argument applied to the opposite Stokes sectors (replacing~\(d\) by~\(d+\pi\) and exchanging the roles of upper and lower Stokes matrices) yields relation \eqref{eq:rll-minus}. The mixed relation \eqref{eq:rll-mixed} is obtained by combining the two. Together, these give the three identities of Theorem~\ref{thm:RLL-relation}.
\qed

%% file: final_pictures/pic_Continuation-Path-Page.tex
\usetikzlibrary{decorations.markings}
\begin{figure}[htbp]
    \centering
    % =========================================================================
    % 左侧：图A - $C\{0,1}$万有复叠上的解析延拓路径
    % =========================================================================
    \begin{minipage}[b]{0.48\textwidth}
    \centering
        \begin{tikzpicture}[
            scale=0.8, thick,
            midarrow/.style={postaction={decorate,decoration={
                markings, mark=at position 0.6 with {\arrow{stealth}}
            }}}
        ]

    % ---- real axis (dashed) ----
    \draw[dashed, gray] (-3.5, 0) -- (7.5, 0);

    % ---- punctures at t=0 and t=1 ----
    \filldraw[fill=white, draw=black, thick] (0, 0) circle (2.2pt) node[below=3pt] {$0$};
    \filldraw[fill=white, draw=black, thick] (4, 0) circle (2.2pt) node[below=3pt] {$1$};

    % ---- start / end marker ----
    \filldraw (6, 0) circle (1.4pt) node[above right=2pt] {$Y_{012}$};
    \filldraw (6, 0) circle (1.4pt) node[below right=2pt] {$Y_{210}$};

    % =================================================================
    %  PATH I  (blue, solid)
    % =================================================================

    % I-1 : counterclockwise, passing above t = 1
    %       center at (4,0) = t=1  →  half-ellipse above
    \draw[blue!55!black, midarrow]
        (6, 0) arc (0:180:2 and 1.5)
        node[midway, above=2pt, font=\footnotesize\bfseries, blue!55!black] {I--1};

    % I-2 : counterclockwise, passing above t = 0
    %       center at (0,0) = t=0  →  half-ellipse above the axis
    \draw[blue!55!black, midarrow]
        (2, 0) arc (0:180:2 and 1.5)
        node[midway, above=2pt, font=\footnotesize\bfseries, blue!55!black] {I--2};

    % I-3 : clockwise, large radius, passing above 0 and 1 = counterclockwise, passing under \infty
    %       center at (2,0) (midway between 0 and 1)  →  half-ellipse above
    \draw[blue!55!black, midarrow]
        (-2, 0) arc (180:0:4 and 3)
        node[midway, above=3pt, font=\footnotesize\bfseries, blue!55!black] {I--3};

    % =================================================================
    %  PATH II  (green, solid)
    % =================================================================

    % II-1 : clockwise, large radius, passing below 1 and 0 = counterclockwise, passing above \infty
    %        center at (2,0)  →  half-ellipse below
    \draw[green!50!black, midarrow]
        (6, 0) arc (360:180:4 and 3)
        node[midway, below=3pt, font=\footnotesize\bfseries, green!50!black] {II--1};

    % II-2 : counterclockwise, passing below t = 0
    %        center at (0,0) = t=0  →  half-ellipse below
    \draw[green!50!black, midarrow]
        (-2, 0) arc (180:360:2 and 1.5)
        node[midway, below=2pt, font=\footnotesize\bfseries, green!50!black] {II--2};

    % II-3 : counterclockwise, passing below t = 1
    %       center at (4,0) = t=1  →  half-ellipse below the axis
    \draw[green!50!black, midarrow]
        (2, 0) arc (-180:0:2 and 1.5)
        node[midway, below=2pt, font=\footnotesize\bfseries, green!50!black] {II--3};

    % ---- chamber labels ----
    \filldraw (2, 0) circle (1.4pt) node[above left=2pt] {$Y_{021}$};
    \filldraw (2, 0) circle (1.4pt) node[below right=2pt] {$Y_{120}$}; 
    
    \filldraw (-2, 0) circle (1.4pt) node[above left=2pt] {$Y_{201}$};
    \filldraw (-2, 0) circle (1.4pt) node[below left=2pt] {$Y_{102}$};

    \end{tikzpicture}
    \caption{The paths of analytic continuation on $\widetilde{\mathbb{C}^{\star}}$, the specific branch where $\arg t,\arg (t\!-\!1)\in (-\pi,\pi)$.}
    \label{fig:paths-t-plane}
    \end{minipage}
    \hfill
    % =========================================================================
    % 右侧：图B - 六个解之间的比较关系（一个是先跃升到arg z=d+\pi，另一个后跃升）
    % =========================================================================
    \begin{minipage}[b]{0.48\textwidth}
        \centering
        \begin{tikzpicture}[
            scale=0.75, thick,
            chamber/.style={draw=black!70, rectangle, rounded corners, fill=white, minimum width=1.8cm, inner sep=5pt, font=\footnotesize, align=center},
            pathI/.style={draw, blue!55!black, ->, line width=1.2pt, >=stealth},
            pathII/.style={draw, green!50!black, ->, line width=1.2pt, >=stealth},
            jump/.style={dashed}
        ]

        % ---- 柱状背景：代表 z 相位的两个 Level ----
        % 右侧柱子：初始相平面
        \fill[blue!5, rounded corners] (1.1, -3.8) rectangle (3.9, 3.5);
        \node[blue!60!black, align=center, font=\scriptsize] at (2.5, -3.4) {$\arg z = d$};

        % 左侧柱子：跃升相平面
        \fill[green!5, rounded corners] (-3.9, -2.8) rectangle (-1.1, 4.5);
        \node[green!60!black, align=center, font=\scriptsize] at (-2.5, 4.1) {$\arg z = d+\pi$};

        % ---- 拓扑格点 (右侧列 - Level 0) ----
        \node[chamber] (Y012) at (2.5, -2.2) {$Y_{012}$: to be\\continued};
        \node[chamber] (Y021) at (2.5, 0.3) {$Y_{021}$};
        \node[chamber] (Y201) at (2.5, 2.8) {$Y_{201}$};

        % ---- 拓扑格点 (左侧列 - Level 1) ----
        \node[chamber] (Y102) at (-2.5, -2.1) {$Y_{102}$};
        \node[chamber] (Y120) at (-2.5, 0.4) {$Y_{120}$};
        \node[chamber] (Y210) at (-2.5, 2.9) {$Y_{210}$: to be\\compared};

        % ---- PATH I 的拓扑流动 (蓝色，先跨越 t，再跨越无穷大) ----
        \path[pathI] (Y012) edge node[midway, right=2pt] {\scriptsize $t\circlearrowleft 1$} (Y021);
        \path[pathI] (Y021) edge node[midway, right=2pt] {\scriptsize $t\circlearrowleft 0$} (Y201);
        \path[pathI, jump] (Y201) edge node[midway, above=1pt, sloped] {\scriptsize $t \circlearrowleft \infty$ ($z$-shift)} (Y210);

        % ---- PATH II 的拓扑流动 (绿色，先跨越无穷大，再跨越 t) ----
        \path[pathII, jump] (Y012) edge node[midway, below=1pt, sloped] {\scriptsize $t \circlearrowleft \infty$ ($z$-shift)} (Y102);
        \path[pathII] (Y102) edge node[midway, left=2pt] {\scriptsize $t\circlearrowleft 0$} (Y120);
        \path[pathII] (Y120) edge node[midway, left=2pt] {\scriptsize $t\circlearrowleft 1$} (Y210);

        \end{tikzpicture}
        \caption{The intermediate solutions as anchor points of comparison along the two continuation.}
        \label{fig:solution_connect}
    \end{minipage}

    \label{fig:overall_solution}
\end{figure}

%% file: final_sections/4Representation.tex
\section{Two realizations of the quantum supergroup \texorpdfstring{$U_q(\mathfrak{gl}(m|n))$}{Uq(gl(m|n))}}\label{sec:4}

In this section we recall the definition of quantum supergroup $U_q(\mathfrak{gl}(m|n))$ from both the Drinfeld-Jimbo approach \cite{Drinfeld, Jimbo2} and the Faddeev--Reshetikhin--Takhtajan (FRT) formalism \cite{FRT}. The two Hopf (super)algebras are isomorphic via a super analog of the Ding-Frenkel theorem \cite{Ding-Frenkel1993}. This is further applied to prove Theorem \ref{thm:intro-Rep}. For more details, we refer to \cite{Yamane1994, Manin1989}, and in Section \ref{sec:4.1} we mainly follow the context in \cite{Zhang2016}.

\subsection{The quantum supergroup in two approaches}\label{sec:4.1}

\begin{defi}\label{def:DJ-quantumSupergroup}
Take $q$ not a root of unity. The quantum supergroup $U_q(\mathfrak{gl}(m|n))$ is the superalgebra generated by $e_i,f_i,K_j^{\pm}$ for $1\leq i\leq m+n-1$ and $1\leq j\leq m+n$. The generators have parities $|e_m|=|f_m|=1$, and $|e_i|=|f_i|=|K_j|=0$ otherwise. They are subject to the following relations.
\begin{enumerate}
\item The Cartan generators $K_j$ satisfy
\begin{equation*}
    K_jK_j^{-1}=K_j^{-1}K_j=1,\quad 
    K_je_iK_j^{-1}=q^{\delta_{ji}-\delta_{j,i+1}}e_i,\quad 
    K_jf_iK_j^{-1}=q^{\delta_{j,i+1}-\delta_{ji}}f_i.
\end{equation*}
\item The super commutators between $e_i$ and $f_j$ are equal to
\begin{equation*}
    [e_i,f_j]=\delta_{ij}\frac{K_i^{d_i}K_{i+1}^{-d_{i+1}}-K_i^{-d_i}K_{i+1}^{d_{i+1}}}{q_i-q_i^{-1}}.
\end{equation*}
\item let $[x,y]_r=xy-r(-1)^{|x||y|}yx$ denote the deformation of super commutator, it holds

The bosonic relation:
\begin{align*}
    [e_i, e_j] &= 0, \quad [f_i, f_j] = 0 \quad \text{if } |i-j| \neq 1, \\
    [e_i, [e_i, e_j]_{q^{-1}}]_q &= 0, \quad [f_i, [f_i, f_j]_{q^{-1}}]_q = 0 \quad \text{if } |i-j| = 1, i\neq m.
\end{align*}
The fermionic relation if $m,n>1$:
\begin{align*}
    \big[\big[[e_{m-1}, e_m]_q, e_{m+1}\big]_{q^{-1}}, e_m\big] &= 0, \\
    \big[\big[[f_{m-1}, f_m]_q, f_{m+1}\big]_{q^{-1}}, f_m\big] &= 0.
\end{align*}
\end{enumerate}
Here $d_i=(-1)^{\bar{i}}$ is $1$ for $1\le i\le m$ and $-1$ for $m<i\leq m+n$. Factor $q_i$ is taken to be $q^{d_i}$. Particularly, let $i=j=m$ in the bosonic relation we have $e_m^2=f_m^2=0$. 
\end{defi}
The quantum supergroup $U_q(\mathfrak{gl}(m|n))$ has a Hopf superalgebra structure. Since we only need the product structure, we omit the coproduct here.

\begin{defi}\label{def:FRT-quantumSupergroup}
Let $R$ be the standard super $R-$matrix valued in $\mathrm{End}(\mathbb{C}^{m|n})\otimes\mathrm{End}(\mathbb{C}^{m|n})$, see \eqref{eq:standard-r-matrix}. Then the super FRT algebra $U(R)$ is the unital associative superalgebra generated by elements $\{l_{ij}^{(+)},\,l_{ji}^{(-)}\}_{1\le i\le j\le m+n}$ of parity $|l_{ij}^{(\pm)}|=\bar i+\bar j$, subject to the following relations. Set
\begin{equation*}\label{eq:Lpm-def}
L_{+}=\sum_{i,j=1}^{m+n}l_{ij}^{(+)}\otimes E_{ij},\qquad
L_{-}=\sum_{i,j=1}^{m+n} l_{ij}^{(-)}\otimes E_{ij},
\end{equation*}
where we declare $l_{ij}^{(+)}=0$ for $i>j$ and $l_{ij}^{(-)}=0$ for $i<j$.
Then the defining relations are the following identities in $U(R)\otimes \mathrm{End}(\mathbb{C}^{m|n})\otimes \mathrm{End}(\mathbb{C}^{m|n})$:
\begin{subequations}\label{eq:FRT-relation}
\begin{align}
\label{eq:FRT-pm}
R^{12} L_{\pm}^{(1)} L_{\pm}^{(2)} &= L_{\pm}^{(2)} L_{\pm}^{(1)} R^{12},\\
\label{eq:FRT-mixed}
R^{12} L_{+}^{(1)} L_{-}^{(2)} &= L_{-}^{(2)} L_{+}^{(1)} R^{12},
\end{align}    
\end{subequations}
where the superscripts indicate the tensor factor in $\mathrm{End}(\mathbb{C}^{m|n})\otimes\mathrm{End}(\mathbb{C}^{m|n})$ on which the respective operator acts, and all products are computed in the super tensor product with Koszul signs. Additionally,
\begin{equation}\label{eq:FRT-unitarity}
l_{ii}^{(+)} l_{ii}^{(-)} = l_{ii}^{(-)} l_{ii}^{(+)} = 1,
\qquad 1\le i\le m+n.
\end{equation}
\end{defi}

\begin{pro}[c.f.\cite{FanHouShi1997, Zhang1997}]\label{prop:superDing-Frenkel}
There exists an isomorphism of Hopf superalgebras
$\Phi: U_q(\mathfrak{gl}(m|n))\xrightarrow{\cong} U(R)$.
On the generators $\{e_i,f_i\}_{i=1}^{m+n-1},\{K_j\}_{j=1}^{m+n}$ of $U_q(\mathfrak{gl}(m|n))$, the isomorphism is given by
\begin{equation*}
\Phi(K_j)=\big(l_{jj}^{(+)}\big)^{d_j}=\big(l_{ii}^{(-)}\big)^{-d_j},\qquad
\Phi(e_i)=\frac{l_{ii}^{(-)}l_{i,i+1}^{(+)}}{q_i^{-1}-q_i},\qquad
\Phi(f_i)=\frac{l_{i+1,i}^{(-)}l_{ii}^{(+)}}{q_i-q_i^{-1}}.
\end{equation*}
\end{pro}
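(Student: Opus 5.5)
The plan is to prove the super Ding--Frenkel isomorphism by the standard two-sided argument: build a homomorphism, build an inverse (or a spanning/PBW comparison), and check that the two compose to the identity on generators. I will follow the strategy of the non-super Ding--Frenkel theorem \cite{Ding-Frenkel1993}, tracking Koszul signs via the super tensor product rule \eqref{eq:super_tensor}. The parity bookkeeping should be consistent from the start: since $|l^{(\pm)}_{ij}|=\bar i+\bar j$, the elements $l^{(\pm)}_{i,i+1}, l^{(\pm)}_{i+1,i}$ are odd exactly when $i=m$, matching $|e_m|=|f_m|=1$.

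\textbf{Step 1: unpack the FRT relations entrywise.} Taking matrix coefficients of \eqref{eq:FRT-pm}--\eqref{eq:FRT-mixed} against $E_{ij}\otimes E_{kl}$, with $R$ given by \eqref{eq:standard-r-matrix}, yields explicit quadratic relations among the $l^{(\pm)}_{ij}$. I will extract four groups of consequences.
\begin{enumerate}
\item[(i)] The diagonal elements $l^{(\pm)}_{jj}$ pairwise commute. Together with \eqref{eq:FRT-unitarity}, this shows that $\Phi(K_j)$ is well defined and invertible, and that $(l_{jj}^{(+)})^{d_j}=(l_{jj}^{(-)})^{-d_j}$.
\item[(ii)] The conjugation rules $l^{(+)}_{jj}\, l^{(+)}_{i,i+1}\, (l^{(+)}_{jj})^{-1}=q^{\pm\ldots}\, l^{(+)}_{i,i+1}$. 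Here the exponent $q^{(-1)^{\bar j}(\delta_{ji}-\delta_{j,i+1})}$ combines with the power $d_j$ to give exactly the $K$--$e$ relation.
\item[(iii)] The mixed relation \eqref{eq:FRT-mixed} at indices $(i,i+1),(i+1,i)$ gives $[l_{i,i+1}^{(+)},l_{i+1,i}^{(-)}]$ as a multiple of $(q_i-q_i^{-1})$ times differences of products of diagonal entries. After normalization this is the $[e_i,f_j]$ relation; the case $i\ne j$ comes from the off-diagonal index combinations.
\item[(iv)] Express $l^{(+)}_{i,i+2}$ as a $q$-commutator of $l^{(+)}_{i,i+1}$ and $l^{(+)}_{i+1,i+2}$ (up to diagonal factors), and likewise for longer root vectors. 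This is used below.
\end{enumerate}

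\textbf{Step 2: Serre relations and the homomorphism property.} The bosonic Serre relations and the condition $e_m^2=0$ follow from the $\pm\pm$ relations restricted to triples of indices. For $e_m^2=0$, the relevant diagonal coefficient of $R$ at an odd index is $q^{-1}$, which forces $(l^{(+)}_{m,m+1})^2=0$. The fermionic quartic relation is the point I expect to be the main obstacle. I would handle it by reducing to the rank-three subalgebra for indices $m-1,m,m+1,m+2$, realizing $[[e_{m-1},e_m]_q,e_{m+1}]_{q^{-1}}$ as a multiple of $l^{(+)}_{m-1,m+2}$ via (iv), and then using the FRT relation between $l^{(+)}_{m-1,m+2}$ and $l^{(+)}_{m,m+1}$. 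That relation gives a supercommutator equal to zero because the corresponding $R$-coefficient terms cancel. Steps 1 and 2 together give a superalgebra homomorphism $\Phi$.

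\textbf{Step 3: surjectivity and injectivity.} Surjectivity follows from (iv) together with its analogue for $l^{(-)}$: every $l^{(\pm)}_{ij}$ is generated by the diagonal entries and the neighbours $l^{(\pm)}_{i,i\pm1}$, which lie in the image. For injectivity, I would compare bases. The FRT relations allow ordered monomials in the $l^{(\pm)}_{ij}$ to span $U(R)$, and these monomials map onto a PBW-type basis of $U_q(\mathfrak{gl}(m|n))$ \cite{Yamane1994} under the inverse assignment. Alternatively, I would cite \cite{FanHouShi1997, Zhang1997}, where the inverse map is constructed and compatibility with coproducts is checked on generators using $\Delta(L_\pm)=L_\pm\dot\otimes L_\pm$. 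Because both coproducts are algebra maps, checking them on generators is sufficient.
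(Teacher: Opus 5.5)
\textbf{Injectivity gap.} The paper gives no argument for this proposition; it is imported from \cite{FanHouShi1997, Zhang1997}. Your fallback of citing those papers is therefore what the paper does, and your Steps 1--2 follow the usual route behind them. As a self-contained proof, however, your plan has a gap at injectivity.

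Knowing that ordered monomials span $U(R)$ and that $\Phi$ is surjective only gives upper bounds on the size of $U(R)$. Injectivity of $\Phi$ needs a bound from below: either linear independence of those monomials in $U(R)$, or a left inverse of $\Phi$. Your phrase ``under the inverse assignment'' presupposes exactly this, since a map out of $U(R)$ prescribed only on a spanning set is not known to be well defined.

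The missing ingredient is an algebra homomorphism $\Psi:U(R)\to U_q(\mathfrak{gl}(m|n))$. The standard construction sends $L_\pm$ to $(\mathrm{id}\otimes\pi)(\mathcal R)$ and $(\mathrm{id}\otimes\pi)(\mathcal R_{21}^{-1})$, in the order dictated by triangularity. Here $\mathcal R$ is the universal $R$-matrix and $\pi$ the vector representation. The relations \eqref{eq:FRT-pm}--\eqref{eq:FRT-mixed} then follow from the quantum Yang--Baxter equation, and one checks $\Psi\circ\Phi=\mathrm{id}$ on generators. Without this you obtain only a surjective homomorphism $\Phi$. That is, incidentally, all the proof of Theorem~\ref{thm:intro-Rep} uses.

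\textbf{Step 1 fails with the conventions as written.} Your Step 1 states what the entrywise expansion should give, but you do not carry it out, and with the conventions exactly as in Definition~\ref{def:FRT-quantumSupergroup} it gives something else. The $(q-q^{-1})$-part of $R$ in \eqref{eq:standard-r-matrix} is $E_{ji}\otimes E_{ij}$ with $i<j$, so it is lower triangular in the first leg.

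Compare the coefficients of $E_{ii}\otimes E_{i,i+1}$ in \eqref{eq:FRT-mixed}, with $L_+$ upper and $L_-$ lower triangular. The left side contributes nothing, while the right side contributes $\pm(q-q^{-1})\,l^{(-)}_{ii}l^{(+)}_{i,i+1}$. For $i=1$ and $m\ge 2$ there are no Koszul signs, and the identity reads $0=(q-q^{-1})\,l^{(-)}_{11}l^{(+)}_{12}$. By \eqref{eq:FRT-unitarity} this forces $l^{(+)}_{i,i+1}=0$, hence $\Phi(e_i)=0$. So your item (iii), and the isomorphism itself, cannot hold in that form.

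The mixed relation must be written compatibly with the triangularity. One option is $R^{12}L_-^{(1)}L_+^{(2)}=L_+^{(2)}L_-^{(1)}R^{12}$; another is to replace $R$ by $R_{21}$ in all relations. With either fix, the coefficient of $E_{i+1,i}\otimes E_{i,i+1}$ expresses the commutator of $l^{(-)}_{i+1,i}$ and $l^{(+)}_{i,i+1}$ as $(q-q^{-1})$ times a difference of products of diagonal entries. That is the $[e_i,f_i]$ relation you need. Fixing this convention, and then actually doing the sign-tracked expansion, has to come before Steps 1--2.
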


\subsection{The proof of Theorem \ref{thm:intro-Rep}}

Now we are ready to prove Theorem \ref{thm:intro-Rep}.

\begin{proof}
By Proposition~\ref{prop:triangularity-sec2} and Definition~\ref{def:renorm-Stokes}, the Stokes supermatrices $S_{h\pm}(u)$ are upper and lower triangular with respect to the dominance ordering $\sigma_d$. That is $(S_{h+}(u))_{ij}=0$ for $i>j$ and $(S_{h-}(u))_{ij}=0$ for $i<j$. Moreover, the diagonal entries satisfy $S_{h+}(u)_{ii}\,S_{h-}(u)_{ii}=1$ which is exactly \eqref{eq:FRT-unitarity}. Now Theorem~\ref{thm:RLL-relation} states that $S_{h+}(u)$ and $S_{h-}(u)$ satisfy the (super) $RLL$ relations~\eqref{eq:FRT-pm}--\eqref{eq:FRT-mixed}. Therefore, the assignment $l_{ij}^{(\pm)}\mapsto S_{h\pm}(u)_{i,j}$ extends to an algebra homomorphism $\varphi: U(R)\to\mathrm{End}(V)$.

Eventually, composing with the isomorphism $\Phi$ in Proposition \ref{prop:superDing-Frenkel}, we obtain
\begin{equation*}
\mathcal{S}_q(u):=\varphi\circ\Phi:\;
U_q(\mathfrak{gl}(m|n))\longrightarrow \mathrm{End}(V)
\end{equation*}
which is the desired representation.
\end{proof}

\subsection*{Acknowledgements}
\noindent
We would like to thank Jinghong Lin, Qian Tang and Xianda Wang for their useful comments on the paper.
The authors are supported by the National Key Research, Development Program of China 
(No.~2021YFA1002000). Z.Wang is also supported by Beijing Natural Science Foundation (No.~QY25087).

\input{final_bibtex/Biblio}
\AddressesLi

\AddressesWang

\Addresses